\numberwithin{equation}{section}
\numberwithin{figure}{section}
\theoremstyle{plain}
\newtheorem{thm}{\protect\theoremname}[section]
  \theoremstyle{plain}
  \newtheorem{prop}[thm]{\protect\propositionname}
  \theoremstyle{definition}
  \newtheorem{defn}[thm]{\protect\definitionname}
  \theoremstyle{plain}
  \newtheorem{cor}[thm]{\protect\corollaryname}
  \theoremstyle{plain}
  \newtheorem{lem}[thm]{\protect\lemmaname}
  \theoremstyle{remark}
  \newtheorem{rem}[thm]{\protect\remarkname}
  \theoremstyle{definition}
  \theoremstyle{plain}
  \newtheorem*{lem*}{\protect\lemmaname}
  \providecommand{\corollaryname}{Corollary}
  \providecommand{\definitionname}{Definition}
  \providecommand{\examplename}{Example}
  \providecommand{\lemmaname}{Lemma}
  \providecommand{\propositionname}{Proposition}
  \providecommand{\remarkname}{Remark}
\providecommand{\theoremname}{Theorem}
\begin{document}

\title{{\bf A hyperspace of convex bodies arising from
 tensor norms}}

\author{{\bf Luisa F. Higueras-Monta{\~n}o}
\\ {\it Centro de Investigaci\'on en Matem\'aticas (CIMAT)}\\ {\it A.P. 402 Guanajuato, Gto., M\'exico}\\ {\it fher@cimat.mx}
}

\date{ }
\maketitle
\begin{abstract}
In a preceding work it is determined when a centrally symmetric convex body in $\mathbb{R}^d,$ $d=d_1\cdots d_l,$ is the closed unit ball of a reasonable crossnorm on $\mathbb{R}^{d_1}\otimes\cdots\otimes\mathbb{R}^{d_l}.$
Consequently, the class of tensorial bodies is introduced, an associated tensorial Banach-Mazur distance is defined and the corresponding Banach-Mazur type compactum is proved to exist. In this paper, we introduce the hyperspace of these convex bodies. We called ``the space of tensorial bodies''. It is proved that the group of linear isomorphisms on $\mathbb{R}^{d_1}\otimes\cdots\otimes\mathbb{R}^{d_l}$ preserving decomposable vectors acts properly (in the sense of Palais) on it. A convenient compact global slice for the space is constructed. With it, topological representatives for  the space of tensorial bodies and the Banach-Mazur type compactum are given. Among others, it is showed that the set of ellipsoids in the class of tensorial bodies is homeomorphic to the Euclidean space of dimension $p=\frac{d_1(d_1+1)}{2}+\cdots+\frac{d_l(d_l+1)}{2}.$ We also prove  that both the projective and the injective tensor products of $0$-symmetric convex bodies are continuous functions with respect to the Hausdorff distance. 
\newline

\noindent{{\it Keywords:} Convex body, Tensor norm, Hyperspace, Lie groups, Proper actions, Tensor product of convex sets, Linear mappings on tensor spaces,  Banach-Mazur compactum.}
\newline

\noindent{{\it 2000 Mathematics Subject Classification:} 57N20, 46M05, 52A21, 57S20,  15A69.}

\end{abstract}

\section{Introduction}

Nowadays, tensor products appear as basic tools in many problems of both pure and applied nature, as can be seen in \cite{DeSilva2008,Kolda2009} and the monograph \cite{Landsberg2019}. In this sense, the theory of tensor products of Banach spaces, established by A. Grothendieck \cite{Grothendieck}, has become in an essential tool for the study of tensor products and its applications. This can be traced by its influence to a wide range of areas, from Banach space theory \cite{defantfloret,Diestelfourie,Ryan2013} to Mathematical Analysis or Graph Theory \cite{pisier} and theoretical computer science \cite{KhotNaor}.

In \cite{tensorialbodies}, M. Fern\'andez-Unzueta and the author give an intrinsic description of the convex bodies associated to tensor norms on finite dimensions. It means characterizing when a $0$-symmetric convex body $Q$ on $\mathbb{R}^d,$ $d=d_1\cdots d_l,$ is the unit ball of a reasonable crossnorm on the tensor space $\otimes_{i=1}^l(\mathbb{R}^{d_i},\|\cdot\|_i)$ for some norms $\|\cdot\|_i$ not determined \textit{a priori}. See \cite[Theorem 3.2]{tensorialbodies}. This class of convex bodies is called \textit{tensorial bodies} \cite[Definition 3.3]{tensorialbodies}. Among its principal properties, a Banach-Mazur type distance is introduced and its associated Banach-Mazur type compactum is exhibited \cite[Theorem 3.13]{tensorialbodies}. 
In the present work, we introduce the hyperspace consisting of the tensorial bodies and investigate its topological structure. The main results provide topological representatives for both the space of tensorial bodies and its associated Banach-Mazur type compactum,  see Corollaries \ref{cor:topol rep for BM}, \ref{cor:homeomorfismo importante} and \ref{cor: tensorial bodies and Rp}.


The Banach-Mazur compactum is a central object 
in the study of Banach spaces  \cite{Tomczak-Jaegermann1989}. It 
lies between two areas, namely the geometry of Banach spaces and infinite dimensional topology. Many of its metric properties, such as diameters and distances between particular points, were deeply studied in the twentieth century \cite{Gluskin,JohnF,SzarekgeometryBM}. In contrast, its topological structure was not determined until the beginning of this century. It was due to the works of S. Antonyan \cite{banacamazurcompactum,antonyananerviowest} and {S}. {A}geev and D. Repov\v{s} \cite{AgeevRepovs1} that, via topological groups, the topology of the Banach-Mazur compactum was established. In this way, it is worth to notice that the results presented here extend, to the tensorial setting, many of the properties exhibited in \cite{banacamazurcompactum} for the  Banach-Mazur compactum and the hyperspace of $0$-symmetric convex bodies.

\medskip

Below we present  the contents of the paper. We begin by stating basic results and notation from tensor norms and group actions used along the work. Then,  in Section \ref{sec: tensor bodies}, we review the main properties of the tensorial bodies \cite{tensorialbodies}. A tensorial body $Q$ in $\otimes_{i=1}^l\mathbb{R}^{d_i}$ is a $0$-symmetric convex body such that 
$Q_{1}\otimes_{\pi}\cdots\otimes_{\pi}Q_{l}\subseteq Q\subseteq Q_{1}\otimes_{\epsilon}\cdots\otimes_{\epsilon}Q_{l}$
for some $0$-symmetric convex bodies $Q_i\subset\mathbb{R}^{d_i}.$ The set of tensorial bodies in $\otimes_{i=1}^l\mathbb{R}^{d_i}$ is denoted by $\mathcal{B}_\otimes(\otimes_{i=1}^l\mathbb{R}^{d_i}).$ Here, $\otimes_{\pi}$ and $\otimes_{\epsilon}$ are, respectively, the projective and the injective tensor product of $0$-symmetric convex bodies (\cite{Aubrun2006}, \cite[Section 4.1]{Aubrun2017}). 

When regarding as a hyperspace of compact convex sets in $\otimes_{i=1}^l\mathbb{R}^{d_i},$ the space of tensorial bodies $\mathcal{B}_\otimes(\otimes_{i=1}^l\mathbb{R}^{d_i})$ is  a topological subspace of the space of $0$-symmetric convex bodies in $\otimes_{i=1}^l\mathbb{R}^{d_i}\simeq\mathbb{R}^d,$ $d=d_1\cdots d_l.$ In this way, in Section \ref{sec:hyperspace of convex bodies}, we prove that this space is closed and contractible, see Propositions \ref{prop:Bsigma is closed} and \ref{prop: tensorial bodies are contractil}. 
An expected and fundamental result is given in Proposition \ref{prop:porducto projectivo e injectivo es continuo con hauss}, where we show that the projective $\otimes_\pi$ and the injective $\otimes_\epsilon$ tensor products are continuous functions with respect to the Hausdorff distance.

To go further into the topological structure of the space of tensorial bodies, in Section \ref{sec:the action of GLtensor}, we introduce an action of a Lie group on this space. We prove that  the group of linear isomorphisms on $\otimes_{i=1}^l\mathbb{R}^{d_i}$ preserving decomposable vectors $GL_\otimes(\otimes_{i=1}^l\mathbb{R}^{d_i})$ (see Section \ref{sec: tensor bodies} for the definition and basic proterties) acts properly (in the sense of Palais) on $\mathcal{B}_\otimes(\otimes_{i=1}^l\mathbb{R}^{d_i})$ (Theorem \ref{thm:the action is proper}). The action is given by
\begin{alignat*}{1}
GL_\otimes(\otimes_{i=1}^l\mathbb{R}^{d_i})\times\mathcal{B}_{\otimes}(\otimes_{i=1}^l\mathbb{R}^{d_i}) & \longrightarrow\mathcal{B}_{\otimes}(\otimes_{i=1}^l\mathbb{R}^{d_i})\\
\left(T,Q\right) & \mapsto TQ:=\left\{ Tu:u\in Q\right\} .
\end{alignat*}
This result together with \cite[Corollary 4.3]{tensorialbodies} allows us to calculate the topological structure of the set of tensorial ellipsoids $\mathscr{E}_{\otimes}(\otimes_{i=1}^{l}\mathbb{R}^{d_{i}})$ (i.e the ellipsoids in $\otimes_{i=1}^l\mathbb{R}^{d_i}$ that also are tensorial bodies, see Section \ref{sec:tensorial ellipsoids}). We show (Corollary \ref{cor:glsigma over osigma and ellipsoids}) that:
\begin{center}
 $\mathscr{E}_{\otimes}(\otimes_{i=1}^{l}\mathbb{R}^{d_{i}})$ \textit{is homeomorphic to} 
$\mathbb{R}^p$ \textit{with} $p=\frac{d_1(d_1+1)}{2}+\cdots+\frac{d_l(d_l+1)}{2}.$\\
\par\end{center}
Throughout the paper $d,d_1,\ldots,d_l\geq2$ and $l\geq2$ are integers.  
In this section, it is fundamental the subgroup $O_\otimes(\otimes_{i=1}^{l}\mathbb{R}^{d_{i}}):=O(\otimes_{i=1}^{l}\mathbb{R}^{d_{i}})\cap GL_\otimes(\otimes_{i=1}^{l}\mathbb{R}^{d_{i}})$ which consists of the orthogonal maps on the Hilbert tensor product $(\otimes_{i=1}^{l}\mathbb{R}^{d_{i}},\|\cdot\|_H)$  preserving decomposable vectors 

In Section \ref{sec:global slice}, 
we construct a compact $O_\otimes(\otimes_{i=1}^{l}\mathbb{R}^{d_{i}})$-global slice $\mathscr{L}_\otimes(\otimes_{i=1}^{l}\mathbb{R}^{d_{i}})$  for the space of tensorial bodies. To that end, we first define an equivariant retraction 
$l_\otimes:\mathcal{B}_\otimes(\otimes_{i=1}^{l}\mathbb{R}^{d_{i}})\rightarrow\mathscr{E}_\otimes(\otimes_{i=1}^{l}\mathbb{R}^{d_{i}})$ from the space of tensorial bodies onto the tensorial ellipsoids. 
It sends each tensorial body $Q$ in $\otimes_{i=1}^{l}\mathbb{R}^{d_{i}}$ to the L\"{o}wner ellipsoid  $L\ddot{o}w(Q^1\otimes_\pi\cdots\otimes_\pi Q^l)$ of the projective tensor product of the $0$-symmetric convex bodies $Q^i$ associated to $Q$ in Remark \ref{rem: canonical section} (Proposition \ref{prop:lsigma es RETRACCION}). Then, the compact $O_\otimes(\otimes_{i=1}^{l}\mathbb{R}^{d_{i}})$-global slice of $\mathcal{B}_\otimes(\otimes_{i=1}^{l}\mathbb{R}^{d_{i}})$ is the inverse image 
$\mathscr{L}_\otimes(\otimes_{i=1}^{l}\mathbb{R}^{d_{i}})=l_\otimes^{-1}(B_2^{d_1,\ldots,d_l})$ of the Euclidean ball $B_2^{d_1,\ldots,d_l}$ on $(\otimes_{i=1}^{l}\mathbb{R}^{d_{i}},\|\cdot\|_H).$ See Theorem \ref{thm:compact global slice}.

It is in Section \ref{sec:main results} where we accomplish our principal goals: to provide topological representatives for the space of tensorial bodies (Corollaries \ref{cor:homeomorfismo importante} and \ref{cor: tensorial bodies and Rp} ) and its associated Banach-Mazur type compactum $\mathcal{BM}_\otimes(\otimes_{i=1}^{l}\mathbb{R}^{d_{i}})$ (Corollary \ref{cor:topol rep for BM}).  $\mathcal{BM}_\otimes(\otimes_{i=1}^{l}\mathbb{R}^{d_{i}})$ consists of the classes of tensorial bodies determined by $GL_\otimes(\otimes_{i=1}^{l}\mathbb{R}^{d_{i}})$ endowed with the metric $log\delta_{\otimes}^{BM}$ induced by the tensorial Banach-Mazur distance $\delta_{\otimes}^{BM},$ see \cite[Theorem 3.13]{tensorialbodies}. In Corollary \ref{cor:topol rep for BM}, we first prove that $\mathcal{BM}_\otimes(\otimes_{i=1}^{l}\mathbb{R}^{d_{i}})$ is homeomorphic to  the orbit space $\mathcal{B}_\otimes(\otimes_{i=1}^{l}\mathbb{R}^{d_{i}})/GL_\otimes(\otimes_{i=1}^{l}\mathbb{R}^{d_{i}})$. Then, from Theorem \ref{thm:compact global slice}, we obtain a homeomorphism between the compactum and the orbit space $\mathscr{L}_\otimes(\otimes_{i=1}^{l}\mathbb{R}^{d_{i}})/O_\otimes(\otimes_{i=1}^{l}\mathbb{R}^{d_{i}}).$  We thus get the representatives:
\begin{center}
$\mathcal{BM}_\otimes(\otimes_{i=1}^{l}\mathbb{R}^{d_{i}})\cong\mathcal{B}_\otimes(\otimes_{i=1}^{l}\mathbb{R}^{d_{i}})/GL_\otimes(\otimes_{i=1}^{l}\mathbb{R}^{d_{i}})\cong\mathscr{L}_\otimes(\otimes_{i=1}^{l}\mathbb{R}^{d_{i}})/O_\otimes(\otimes_{i=1}^{l}\mathbb{R}^{d_{i}})$.\\
\par
\end{center}
This result extends to the tensorial setting Corollary 1 of \cite{banacamazurcompactum}  which exhibits new representatives for the Banach-Mazur compactum.


In Corollary \ref{cor:homeomorfismo importante}, we show that the space of tensorial bodies is homeomorphic to the product $\mathscr{L}_\otimes(\otimes_{i=1}^{l}\mathbb{R}^{d_{i}})\times\mathscr{E}_\otimes(\otimes_{i=1}^{l}\mathbb{R}^{d_{i}}).$ This, along with Corollary \ref{cor:glsigma over osigma and ellipsoids}, allows us to prove that
$\mathcal{B}_\otimes(\otimes_{i=1}^{l}\mathbb{R}^{d_{i}})$ is homeomorphic to $\mathscr{L}_\otimes(\otimes_{i=1}^{l}\mathbb{R}^{d_{i}})\times\mathbb{R}^p$ (Corollary \ref{cor: tensorial bodies and Rp}).

We finish the paper with an appedix (Section \ref{sec: properties of GL tensor}) about the Lie group $GL_\otimes(\otimes_{i=1}^{l}\mathbb{R}^{d_{i}})$ which may be of independent interest. We show that the subgroup $O_\otimes(\otimes_{i=1}^{l}\mathbb{R}^{d_{i}})$ is a maximal compact subgroup of $GL_\otimes(\otimes_{i=1}^{l}\mathbb{R}^{d_{i}})$ (Proposition \ref{prop: maximal comp group}). This result, together with the so called polar decomposition theorem,  allows us to determine the Lie group structure of $GL_\otimes(\otimes_{i=1}^{l}\mathbb{R}^{d_{i}}).$ See Proposition \ref{prop: charact of GL tensor}, Corollary \ref{cor: structure GL/O tensor} and Lemma \ref{lem: split thm of Gl tensor}. 

We would like to point out that this work follows the ideas presented in \cite{banacamazurcompactum,antonyananerviowest}, where the topological structure of the Banach-Mazur compactum is determined via the action induced by the general linear group $GL(d)$. 
Recent works about the topology of some hyperspaces of convex bodies are \cite{Antonyan2013,AntonyanJonardOrdonez}.






\subsection{Notation}
\label{sec:notation}

The letters $d,d_{i}$ will denote possitive integers greater than or equal to $2$. By 
$\left\langle\cdot,\cdot\right\rangle,$ $\Vert\cdot\Vert,$ $B_2^{d}$ we denote the standard scalar product on $\mathbb{R}^d$ and its associated norm and Euclidean ball. As usual the group of linear isomorphisms on $\mathbb{R}^d$ is denoted by $GL(d).$
Every compact convex set  $Q\subset\mathbb{R}^{d}$ with nonempty interior (i.e. $\text{int}(Q)\neq\emptyset$) is called a convex body. In adition, if $Q=-Q$ then $Q$ is called a $0$-symmetric convex body. We write $\mathcal{B}(d)$ to denote the set of $0$-symmetric convex bodies in $\mathbb{R}^{d}.$  For every $Q\in\mathcal{B}(d),$ its polar set, $Q^\circ,$ is a $0$-symmetric convex body, defined as $Q^{\circ}:=\left\{ y\in\mathbb{R}^{d}:{\sup}_{x\in Q}|\left\langle x,y\right\rangle|\leq1\right\}.$

\textit{The Minkowski functional} of a $0$-symmetric convex body $Q\subset\mathbb{R}^d$  is defined by
$g_{Q}\left(x\right):=\inf\left\{ \lambda>0:\lambda^{-1}x\in Q\right\},$ for $x\in\mathbb{R}^{d}.$ A well known result concerning  $0$-symmetric convex bodies is the bijection between norms on $\mathbb{R}^{d}$ and $0$-symmetric convex bodies. This result, due to H. Minkowski \cite{Minkowski1927}, is fundamental to this work and will be used frequently without making an explicit reference. It can be stated as follows: the map $\mathcal{B}\left(d\right)\rightarrow\left\{\text{norms on }\mathbb{R}^{d}\right\}$ that sends $Q$ to its Minkowski functional $g_Q(\cdot)$
is a bijection, the unit ball of $\left(\mathbb{R}^{d},g_Q\right)$ is $Q$ and $g_{Q^{\circ}}(x)=\left\Vert \left\langle \cdot,x\right\rangle :\left(\mathbb{R}^{d},g_p\right)\rightarrow\mathbb{R}\right\Vert.$  See \cite[Remark 1.7.8]{Schneider1993}.

The results about convex bodies that will be used in this paper can be found  in  \cite{Schneider1993}.
 
\subsection{Tensor norms}
\label{sec:basic tensor}
 We use standard notation from Banach space theory and tensor products. The symbols $M$, $N$ or $M_i$ will denote Banach spaces.  The closed unit ball of $M$ will be  denoted by  $B_{M}$ and  its  dual space by $M^{*}.$ We write $\mathcal{L}\left(M,N\right)$ to denote the Banach space of  bounded linear operators from $M$ to $N,$ with the usual operator norm.
 
The tensor product of $M_i,$ $i=1,\ldots,l,$ is denoted by $\otimes_{i=1}^{l}M_{i}.$ The elements of the form $x^1\otimes\cdots\otimes x^l,$ $x^i\in M_i,$ are called decomposable vectors. For each subset $A_i\subseteq M_i,$ $i=1,\ldots,l,$ $\otimes(A_1\ldots,A_l)$ is the image of $A_1\times\cdots\times A_l$ under the canonical multilinear map $\otimes.$ The decomposable vector $x_{1}^{*}\otimes\cdots\otimes x_{l}^{*},$ $x_i^{*}\in M_i^{*},$ determines a linear functional on $\otimes_{i=1}^{l}M_{i}$ which sends $x^1\otimes\cdots\otimes x^l$ to $x_1^{*}(x^1)\cdots x_l^{*}(x^l).$

A norm $\alpha\left(\cdot\right)$ on the tensor product $\otimes_{i=1}^{l}M_{i}$ is a \textit{reasonable crossnorm} if
\begin{enumerate}
\item $\alpha\left(x^{1}\otimes\cdots\otimes x^{l}\right)\leq\left\Vert x^{1}\right\Vert \cdots\left\Vert x^{l}\right\Vert $
for every $x^{i}\in M_{i},$ $i=1,...,l.$
\item For every $x_{i}^{*}\in M_{i}^{*},$ the linear functional $x_{1}^{*}\otimes\cdots\otimes x_{l}^{*}$ is bounded
and $\left\Vert x_{1}^{*}\otimes\cdots\otimes x_{l}^{*}\right\Vert \leq\left\Vert x_{1}^{*}\right\Vert \cdots\left\Vert x_{l}^{*}\right\Vert .$
\end{enumerate}
The biggest and the smallest reasonable crossnorms are the projective tensor norm $\pi(\cdot)$ and  the injective tensor norm $\epsilon(\cdot)$ respectively. They are defined as: 
\[
\pi(u):=\inf\left\{ \stackrel[i=1]{n}{\sum}\| x_{i}^{1}\| \cdots\| x_{i}^{l}\| :u=\stackrel[i=1]{n}{\sum}x_{i}^{1}\otimes\cdots\otimes x_{i}^{l}\right\}\thinspace\text{,  and}
\]
\[
\epsilon(u):=\sup\left\{ \left|x_{1}^{*}\otimes\cdots\otimes x_{l}^{*}\left(u\right)\right|:x_{i}^{*}\in B_{M_{i}^{*}}, i=1,\ldots,l\right\}
\]
for $u\in\otimes_{i=1}^{l}M_i$. An alternative description of reasonable crossnorms, stated in terms of the norms $\pi(\cdot)$ and $\epsilon(\cdot),$ is the following: a norm $\alpha(\cdot)$ is a reasonable crossnorm if
\begin{equation}
\label{eq:Caracterizacion de normars razonables cruzadas}
\epsilon\left(u\right)\leq\alpha\left(u\right)\leq\pi\left(u\right)\text{ for every }u\in\otimes_{i=1}^{l}M_{i}.
\end{equation}

If $\alpha\left(\cdot\right)$ is a reasonable crossnorm on $\otimes_{i=1}^{l}M_{i}$, $\otimes_{\alpha,i=1}^{l}M_{i}$ will denote the normed space $\left(\otimes_{i=1}^{l}M_{i},\alpha\right),$ and $M_1\hat{\otimes}_{\alpha}\cdots\hat{\otimes}_{\alpha}M_{l}$ its completion. For a deeper discussion about tensor norms we refer the reader  to  \cite{defantfloret,Diestelfourie,Grothendieck,Ryan2013}.

On the tensor product of Euclidean spaces, there is natural scalar product $\left\langle\cdot,\cdot\right\rangle _{H}.$ It endows $\otimes_{i=1}^{l}\mathbb{R}^{d_i}$ with a reasonable crossnorm  $\Vert\cdot\Vert_{H}.$  $\left\langle\cdot,\cdot\right\rangle _{H}$ is defined, on decomposable vectors, as:
 \[
 \left\langle x^{1}\otimes\cdots\otimes x^{l},y^{1}\otimes\cdots\otimes y^{l}\right\rangle _{H}:={\Pi}_{i=1}^l\left\langle x^i,y^i\right\rangle
 \]
 and it is extended to $\otimes_{i=1}^{l}\mathbb{R}^{d_i}$ by multilinearity. The closed unit ball of $\otimes_{H,i=1}^{l}\mathbb{R}^{d_{i}}$ is denoted by $B_{2}^{d_{1},\ldots,d_{l}}$. For a thorough treatment of tensor products of Euclidean spaces, we refer the reader to \cite[Section 2.5 ]{kadisonkingrose}. 

 
 \subsection{Group Actions} 
\label{sec:group actions}

 Below we present the results from topological groups that will be used in Section 4. See \cite{Bredon,Palais1} for a deeper discussion of this topic.  
 
 All topological groups and topological spaces considered are Tychonoff. A $G$-\textit{space} is a pair $\left(X,\theta\right)$ where $G$
is a topological group and $\theta$ is a continuous action of $G$
on $X.$ If $X$ is a $G$-space and $x\in X,$ then 
$
G\left(x\right):=\left\{ gx:g\in G\right\} 
$
denotes the \textit{orbit} of $x.$ By $X/G,$ we denote the orbit space.
 As usual, 
$
G_{x}:=\left\{ g\in G:gx=x\right\} 
$
 is the  \textit{stabilizer} of $G$ at $x$. 
 For every subset $S\subseteq X$ and every subgroup $H\subseteq G,$ 
$
H\left(S\right):=\left\{ hs:h\in H,s\in S\right\} 
$
is the $H$-saturation of $S.$ If $H(S)=S,$ $S$ is called $H$\textit{-invariant}.
 
 For any subgroup $H\subseteq G,$ $G/H:=\{gH: g\in G\}$ is a $G$-space  with the action induced by left translations.
 
 A continuous map $f:X\rightarrow Y$ between two $G$-spaces is called
\textit{equivariant} or a $G$\textit{-map} if $f\left(gx\right)=gf\left(x\right)$
for every $x\in X$ and $g\in G.$
 \medskip

A special class of actions over locally compact groups are the so called proper actions, introduced by R. Palais \cite{Palais2}. They enjoy many of the desirable properties of actions over compact groups, as can bee seen in \cite{Abels,Palais2}.

Let $G$ be a locally compact group and let $X$ be a Tychonoff $G$-space. The action of $G$ on $X$ is \textit{proper} (in the sense of Palais) if any $x\in X$ has a small neighborhood $V.$ Here, a subset  $S\subseteq X$ of a $G$-space is called \textit{small} if any $x\in X$ has a neighborhood $V$ such that
 $\left[S,V\right]:=\left\{ g\in G:gS\cap V\neq\emptyset\right\},$ the \textit{transporter from $S$ to $V$,} has compact closure in $G.$
 \medskip
 
Proper actions play a fundamental role for the paper. For this reason, we include some of its main properties: 
On every $G$-space $X,$ the orbit $G(x),$ $x\in X,$ is a closed subset of $X$ and  $G_{x}$ is a compact subgroup of $G$ (see \cite[Proposition 1.1.4]{Palais2}). Moreover, there is a $G$-equivariant homeomorphism between $G/G_{x}$ and $G\left(x\right),$ see \cite[Proposition 1.1.5]{Palais2}.

Let us recall the definition of a slice:

\begin{defn}
\cite[p. 305]{Palais1}
Let $X$ be a $G$-space and $H$ a closed subgroup of $G.$ An $H$-invariant
subset $S\subseteq X$ is called an $H$-\textit{slice} in $X,$ if
$G\left(S\right)$ is open in $X$ and there exists a $G$-equivariant
map $f:G\left(S\right)\rightarrow G/H$ such that $S=f^{-1}\left(eH\right).$
The saturation $G\left(S\right)$ is called a \textsl{tubular} set.
If $G\left(S\right)=X,$ then $S$ is a \textsl{global}
$H$-slice of $X.$
\end{defn}

An important and well known result about actions of compact Lie groups $G$ establishes the existence of a $G_x$-slice for any $x$ in a $G$-space $X$,
see \cite[p. 14]{Bredon}. The case of non-compact Lie groups and proper actions is deeply studied in \cite{Abels, Palais2}.

\section{Tensorial Bodies}
\label{sec: tensor bodies}

In \cite{tensorialbodies}, the problem of establishing a geometric characterization of the unit balls of tensor normed spaces on finite dimensions is addressed.  There, by means of the correspondence between $0$-symmetric convex bodies and norms on $\mathbb{R}^{d},$ $d=d_1\cdots d_l,$ a characterization of the convex bodies that are closed unit balls of reasonable crossnorms on $\otimes_{i=1}^l\left(\mathbb{R}^{d_i},\Vert\cdot\Vert_i\right),$  for some norms (not determined \textit{a priori}) on each $\mathbb{R}^{d_i}$ is exhibited (see \cite[Theorem 3.2]{tensorialbodies}). These convex bodies are the so called tensorial bodies.


Throughout the paper, $\otimes_{i=1}^{l}\mathbb{R}^{d_{i}}$ will be a Euclidean space with the scalar product $\left\langle\cdot,\cdot\right\rangle _{H}.$ 
Given a tuple $Q_i\subset\mathbb{R}^{d_i},$ $i=1,\ldots,l,$ of $0$-symmetric convex bodies, their projective $\otimes_\pi$ and injective tensor product $\otimes_\epsilon$ (\cite{Aubrun2006}, \cite[Section 4.1]{Aubrun2017}) are defined as:
\begin{equation*}
Q_{1}\otimes_{\pi}\cdots\otimes_{\pi}Q_{l}:=\text{conv}\{ x^{1}\otimes\cdots\otimes x^{l}\in\otimes_{i=1}^{l}\mathbb{R}^{d_{i}}:x^{i}\in Q_{i}, i=1,\ldots,l\}\thinspace\text{, and}
\end{equation*}
\hspace{4cm}
$
Q_{1}\otimes_{\epsilon}\cdots\otimes_{\epsilon}Q_{l}:=\left(Q_{1}^{\circ}\otimes_{\pi}\cdots\otimes_{\pi}Q_{l}^{\circ}\right)^{\circ}.
$
\medskip

When the normed spaces $(\mathbb{R}^{d_{i}},g_{Q_{i}}),$ $i=1,\ldots,l,$ are considered, the projective $\otimes_\pi$ and the injective $\otimes_\epsilon$ tensor products of $Q_1,\ldots,Q_l$ are the closed unit balls associated to the projective $\pi(\cdot)$ and the injective $\epsilon(\cdot)$ tensor norms, respectively. That is,
\begin{equation*}
B_{\otimes_{\pi,i=1}^{l}(\mathbb{R}^{d_{i}},g_{Q_{i}})}=Q_{1}\otimes_{\pi}\cdots\otimes_{\pi}Q_{l}\thinspace\text{, and} 
\end{equation*}
\begin{equation*}
B_{\otimes_{\epsilon,i=1}^{l}(\mathbb{R}^{d_{i}},g_{Q_{i}})}=Q_{1}\otimes_{\epsilon}\cdots\otimes_{\epsilon}Q_{l}.
\end{equation*}
See \cite[pp. 5-6]{tensorialbodies}.

The previous relations give us  the possibility to describe reasonable crossnorms in terms of convex bodies. Given a $0$-symmetric convex body $Q\subset\otimes_{i=1}^l\mathbb{R}^{d_i},$ $g_Q$ is a reasonable crossnorm on $\otimes_{i=1}^l\left(\mathbb{R}^{d_i},\Vert\cdot\Vert_i\right)$ if and only if
\begin{equation}
\label{eq: inclusion convex bodies rc norms}
Q_{1}\otimes_{\pi}\cdots\otimes_{\pi}Q_{l}\subseteq Q\subseteq Q_{1}\otimes_{\epsilon}\cdots\otimes_{\epsilon}Q_{l}.
\end{equation}
where $Q_i=B_{\left(\mathbb{R}^{d_i},\Vert\cdot\Vert_i\right)},$ $i=1,\ldots,l.$ In this case, for every $x^{i}\in\mathbb{R}^{d_{i}},$ $i=1,\ldots,l,$ the following hold:
\begin{align}
\label{eq:rc in gauges}
g_{Q}\left(x^{1}\otimes\cdots\otimes x^{l}\right) & = g_{Q_{1}}\left(x^{1}\right)\cdots g_{Q_{l}}\left(x^{l}\right),\\
g_{Q^{\circ}}\left(x^{1}\otimes\cdots\otimes x^{l}\right) & = g_{Q_{1}^{\circ}}\left(x^{1}\right)\cdots g_{Q_{l}^{\circ}}\left(x^{l}\right),
\end{align}
see \cite[Proposition 3.1]{tensorialbodies}. The inclusions in (\ref{eq: inclusion convex bodies rc norms}) enables the definiton of tensorial bodies:
\begin{defn}
\label{defn:tensorial body}
(\cite[Definition 3.3]{tensorialbodies})
A $0$-symmetric convex body $Q\subset\otimes_{i=1}^{l}\mathbb{R}^{d_{i}}$ is called a \textit{tensorial body in} $\mathbf{\otimes_{i=1}^{l}\mathbb{R}^{d_{i}}}$ if there exist $0$-symmetric convex bodies $Q_{i}\subset\mathbb{R}^{d_{i}},$  $i=1,...,l,$ such that (\ref{eq: inclusion convex bodies rc norms}) holds.
\end{defn}
 If $Q$ satisfies  (\ref{eq: inclusion convex bodies rc norms}), we will say that $Q$ is a \textit{tensorial body with respect to} $Q_1,\ldots,Q_l.$ The set of tensorial bodies in $\otimes_{i=1}^{l}\mathbb{R}^{d_{i}}$ is denoted by $\mathcal{B}_{\otimes}\left(\otimes_{i=1}^{l}\mathbb{R}^{d_{i}}\right).$ The set of tensorial bodies with respect to $Q_{1},...,Q_{l}$ is denoted by $\mathcal{B}_{Q_{1},\ldots,Q_{l}}\left(\otimes_{i=1}^{l}\mathbb{R}^{d_{i}}\right).$

For every non-zero decomposable vector $\mathbf{a}\in\otimes_{i=1}^{l}\mathbb{R}^{d_{i}}$ and every $0$-symmetric convex body $Q\subset\otimes_{i=1}^{l}\mathbb{R}^{d_{i}}$ , if $\mathbf{a}=a^{1}\otimes\cdots\otimes a^{l}$ then $Q_{i}^{a^{1},\ldots,a^{l}},$ $i=1,\ldots,l,$ defined as
\begin{equation}\label{eq: Q1 Ql}
Q_{i}^{a^{1},\ldots,a^{l}}:=\left\{ x^{i}\in\mathbb{R}^{d_{i}}:a^{1}\otimes\cdots\otimes a^{i-1}\otimes x^{i}\otimes a^{i+1}\otimes\cdots\otimes a^{l}\in Q\right\},
\end{equation}
is a  $0$-symmetric convex body.  The next theorem establishes when a $0$-symmetric convex body is the closed unit ball of a reasonable crossnorm for some norms, not determined a priori, on each $\mathbb{R}^{d_i}.$
\begin{thm}
\label{thm:equivalence of tensor body} (\cite[Corollary 3.4]{tensorialbodies})
Let $Q\subset\otimes_{i=1}^{l}\mathbb{R}^{d_{i}}$ be a $0$-symmetric convex body. The following are equivalent:
\begin{enumerate}
\item $Q$ is a tensorial body in $\otimes_{i=1}^{l}\mathbb{R}^{d_{i}}.$
\item There exist norms $\|\cdot\| _{i}$ on $\mathbb{R}^{d_{i}},$
$i=1,...,l,$ such that $g_{Q}$ is a reasonable crossnorm on $\otimes_{i=1}^{l}(\mathbb{R}^{d_{i}},\|\cdot\| _{i}).$
\item  For any  $a^{1}\otimes\cdots\otimes a^{l}\in\partial Q,$ 
\begin{equation*}
\label{eq:ecuacionnnnn}
 Q_{1}^{a^{1},\ldots,a^{l}}\otimes_{\pi}\cdots\otimes_{\pi}Q_{l}^{a^{1},\ldots, a^{l}}\subseteq Q
  \subseteq  Q_{1}^{a^{1},\ldots,a^{l}}\otimes_{\epsilon}\cdots\otimes_{\epsilon}Q_{l}^{a^{1},\ldots,a^{l}}.
\end{equation*}
\end{enumerate}
In this case, $g_{Q_{i}^{a^{1},\ldots,a^{l}}}(\cdot)=\frac{1}{\|a^{i}\| _{i}}\|\cdot\| _{i}$  for $i=1,\ldots,l.$
\end{thm}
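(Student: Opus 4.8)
The statement to prove is Theorem~\ref{thm:equivalence of tensor body}, establishing the equivalence of (1), (2), (3), together with the identification of the Minkowski functionals $g_{Q_i^{a^1,\ldots,a^l}}$. The implication $(1)\Leftrightarrow(2)$ is essentially immediate from the definitions and from the translation \eqref{eq:Caracterizacion de normars razonables cruzadas}--\eqref{eq: inclusion convex bodies rc norms} between reasonable crossnorms and the inclusions $Q_1\otimes_\pi\cdots\otimes_\pi Q_l\subseteq Q\subseteq Q_1\otimes_\epsilon\cdots\otimes_\epsilon Q_l$: a body $Q$ is tensorial with respect to $Q_1,\ldots,Q_l$ exactly when $g_Q$ is a reasonable crossnorm on $\otimes_{i=1}^l(\mathbb{R}^{d_i},g_{Q_i})$, so (1) and (2) are just two names for the same condition. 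Thus the real content is the equivalence with (3) and the formula for the $g_{Q_i^{a^1,\ldots,a^l}}$.

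For $(2)\Rightarrow(3)$: assume $g_Q$ is a reasonable crossnorm on $\otimes_{i=1}^l(\mathbb{R}^{d_i},\|\cdot\|_i)$. Fix $a^1\otimes\cdots\otimes a^l\in\partial Q$, so that by \eqref{eq:rc in gauges} we have $\|a^1\|_1\cdots\|a^l\|_l=g_Q(a^1\otimes\cdots\otimes a^l)=1$, in particular each $a^i\neq 0$. The key computation is to identify $Q_i^{a^1,\ldots,a^l}$: for $x^i\in\mathbb{R}^{d_i}$, using \eqref{eq:rc in gauges} again,
\[
g_Q\bigl(a^1\otimes\cdots\otimes x^i\otimes\cdots\otimes a^l\bigr)=\Bigl(\textstyle\prod_{j\neq i}\|a^j\|_j\Bigr)\|x^i\|_i=\frac{\|x^i\|_i}{\|a^i\|_i},
\]
so that $a^1\otimes\cdots\otimes x^i\otimes\cdots\otimes a^l\in Q$ iff $\|x^i\|_i\le\|a^i\|_i$; hence $Q_i^{a^1,\ldots,a^l}=\|a^i\|_i\,B_{(\mathbb{R}^{d_i},\|\cdot\|_i)}$, i.e.\ $g_{Q_i^{a^1,\ldots,a^l}}=\tfrac1{\|a^i\|_i}\|\cdot\|_i$. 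This already yields the final identification. Now the convex bodies $\|a^i\|_i B_{(\mathbb{R}^{d_i},\|\cdot\|_i)}$ have scaled Minkowski functionals whose product over $i$ is exactly $g_Q(\,\cdot\,)$ on decomposable vectors (the scalars multiply to $1$), so $g_Q$ is a reasonable crossnorm on $\otimes_{i=1}^l(\mathbb{R}^{d_i},g_{Q_i^{a^1,\ldots,a^l}})$ as well; applying the characterization \eqref{eq: inclusion convex bodies rc norms} with these bodies gives precisely the inclusions in (3).

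For $(3)\Rightarrow(1)$: pick any single point $a^1\otimes\cdots\otimes a^l\in\partial Q$ — here one must note such decomposable boundary points exist, e.g.\ take a decomposable vector $\mathbf{a}_0$ (any nonzero decomposable vector) and rescale it to land on $\partial Q$, which stays decomposable. Then (3) gives $Q_1^{a^1,\ldots,a^l}\otimes_\pi\cdots\otimes_\pi Q_l^{a^1,\ldots,a^l}\subseteq Q\subseteq Q_1^{a^1,\ldots,a^l}\otimes_\epsilon\cdots\otimes_\epsilon Q_l^{a^1,\ldots,a^l}$, which is exactly condition \eqref{eq: inclusion convex bodies rc norms} with $Q_i:=Q_i^{a^1,\ldots,a^l}$, so $Q$ is a tensorial body with respect to these bodies, proving (1). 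The main obstacle, such as it is, is bookkeeping: making sure the rescaling of a decomposable vector onto $\partial Q$ is legitimate and that each $Q_i^{a^1,\ldots,a^l}$ is genuinely a $0$-symmetric convex body (nonempty interior, boundedness), which follows from $Q$ being a $0$-symmetric convex body together with $\mathbf a\neq 0$; this is already recorded in the text just before the theorem. Since the whole statement is flagged as \cite[Corollary~3.4]{tensorialbodies}, I expect the proof to be short, essentially just assembling \cite[Theorem~3.2]{tensorialbodies} (which presumably is the $(1)\Leftrightarrow(2)$-type statement with the explicit bodies) with Proposition~3.1 of \cite{tensorialbodies} quoted as \eqref{eq:rc in gauges}, and the elementary observation above about $Q_i^{a^1,\ldots,a^l}$.
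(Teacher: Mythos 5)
Your argument is correct. Note that the paper does not prove this theorem at all: it is imported verbatim from \cite[Corollary 3.4]{tensorialbodies}, so there is no in-paper proof to compare against; your reconstruction assembles exactly the background facts the paper does quote, namely the equivalence between reasonable crossnorms and the inclusions (\ref{eq: inclusion convex bodies rc norms}), the product formula (\ref{eq:rc in gauges}), and the fact that each $Q_{i}^{a^{1},\ldots,a^{l}}$ is a $0$-symmetric convex body, and the key computation $g_{Q_{i}^{a^{1},\ldots,a^{l}}}=\frac{1}{\|a^{i}\|_{i}}\|\cdot\|_{i}$ together with the rescaling observation ($\prod_i\lambda_i=1$ leaves the crossnorm conditions unchanged) is sound and presumably close to the argument in the cited reference.
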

\begin{rem}
\label{rem: canonical section}
To simplify many arguments in the forthcoming proofs, it is convenient to choose the convex bodies of (\ref{eq: Q1 Ql}) in a specific way: for every $0$-symmetric convex body $Q\subset\otimes_{i=1}^{l}\mathbb{R}^{d_{i}},$  $Q^{i}$  denote the convex bodies generated by $e_{1}^{d_{1}}\otimes\cdots\otimes(\lambda e_{1}^{d_{l}}),$ $\lambda=\frac{1}{g_{Q}(e_{1}^{d_{1}}\otimes\cdots\otimes e_{1}^{d_{l}})}.$ That is, $Q^i:=Q_i^{e_{1}^{d_1},\ldots,\lambda e_{l}^{d_l}}$ for $i=1,\ldots,l.$
\end{rem}

In \cite[Section 3.2]{tensorialbodies}, it is proved that there exists a Banach-Mazur type distance on the set of tensorial bodies. It is called the tensorial Banach-Mazur distance. Its existence follows from the relation between tensorial bodies and linear mappings preserving decomposable vectors that we describe below. 

A linear map $T:\otimes_{i=1}^{l}\mathbb{R}^{d_{i}}$ $\rightarrow\otimes_{i=1}^{l}\mathbb{R}^{d_{i}}$ \textit{preserves decomposable vectors} if $T\left(x^{1}\otimes\cdots\otimes x^{l}\right)$ is a decomposable vector for any $x^{i}\in\mathbb{R}^{d_i},$ $i=1,\ldots,l.$ The set of linear isomorphisms preserving decomposabe vectors is denoted by $GL_{\otimes}(\otimes_{i=1}^{l}\mathbb{R}^{d_{i}}).$

 In \cite[Corollary 2.14]{limcampocualquiera}, it is proved that for every $T\in GL_{\otimes}(\otimes_{i=1}^{l}\mathbb{R}^{d_{i}})$ and every $x^{i}\in\mathbb{R}^{d_i},$ we have:
\begin{equation}
\label{eq:tensor group}
T\left(x^{1}\otimes\cdots\otimes x^{l}\right)=T_{1}\left(x^{\sigma\left(1\right)}\right)\otimes\cdots\otimes T_{l}\left(x^{\sigma\left(l\right)}\right)
\end{equation}
where $\sigma$ is a permutation on $\left\{ 1,...,l\right\} $ and $T_{i}\in GL(d_i)$ for $i=1,\ldots,l.$ The latter along with \cite[Theorem 3.12]{tensorialbodies} allows to prove that for every $T\in GL_{\otimes}(\otimes_{i=1}^{l}\mathbb{R}^{d_{i}}):$ 
\begin{equation}
\label{eq: glotimes preserva cuerpos tensoriales}
\text{If }Q\in\mathcal{B}_{Q_{1},\ldots,Q_{l}}\left(\otimes_{i=1}^{l}\mathbb{R}^{d_{i}}\right)\text{, then }TQ\in\mathcal{B}_{T_{1}Q_{\sigma(1)},\ldots,T_{l}Q_{\sigma(l)}}\left(\otimes_{i=1}^{l}\mathbb{R}^{d_{i}}\right).
\end{equation}

In this case, for every tuple of $0$-symmetric convex bodies $Q_i\subset\mathbb{R}^{d_i},$ $i=1,\ldots,l,$ and $\alpha=\pi,\epsilon$ it holds
\begin{equation}
\label{eq:gltensor image of proj and inj tp}
T(Q_{1}\otimes_{\alpha}\cdots\otimes_{\alpha}Q_{l})= T_{1}(Q_{\sigma(1)})\otimes_{\alpha}\cdots\otimes_{\alpha}T_{l}(Q_{\sigma(l)}).
\end{equation}

The \textbf{tensorial Banach-Mazur distance} $\delta_{\otimes}^{BM}\left(P,Q\right),$ between tensorial bodies $P,Q\subset\otimes_{i=1}^{l}\mathbb{R}^{d_{i}},$ is defined as:
\begin{equation}
\label{eq:tensorial bm distance}
\delta_{\otimes}^{BM}\left(P,Q\right):=\inf\left\{ \lambda\geq1:Q\subseteq TP\subseteq\lambda Q,\text{ for }T\in GL_{\otimes}(\otimes_{i=1}^{l}\mathbb{R}^{d_{i}})\right\}.
\end{equation}
 
In \cite[Section 3.2]{tensorialbodies}, it is proved that for each pair $P,Q\in\mathcal{B}_{\otimes}\left(\otimes_{i=1}^{l}\mathbb{R}^{d_{i}}\right),$ the infimum in (\ref{eq:tensorial bm distance}) attains its value at some $\lambda\geq1$ and $T\in GL_{\otimes}\left(\otimes_{i=1}^{l}\mathbb{R}^{d_{i}}\right).$ This naturally leads to the  following equivalence relation: For every pair of tensorial bodies $P,Q\subset\otimes_{i=1}^{l}\mathbb{R}^{d_{i}},$ $P\sim Q$ if and only if $\delta_{\otimes}^{BM}\left(P,Q\right)=1$ or, equivalently, there exists $T\in GL_\otimes(\otimes_{i=1}^{l}\mathbb{R}^{d_{i}})$ such that $T(P)=Q$.

The set of equivalence classes determined by this relation is denoted by $\mathcal{BM}_{\otimes}\left(\otimes_{i=1}^{l}\mathbb{R}^{d_{i}}\right).$ In  \cite[Theorem 3.13]{tensorialbodies}, it is showed that  $\left(\mathcal{BM}_{\otimes}\left(\otimes_{i=1}^{l}\mathbb{R}^{d_{i}}\right),\log\delta_{\otimes}^{BM}\right)$ is a compact metric space. It is called \textbf{the compactum of tensorial bodies}.

\subsection{Tensorial ellipsoids}
\label{sec:tensorial ellipsoids}
An ellipsoid $\mathcal{E}\subset V$ in a $d$-dimensional vector space is defined as the image of the Euclidean ball $B_2^{d}$ by a linear isomorphism $T:\mathbb{R}^d\rightarrow V.$ In the case of ellipsoids in $\otimes_{i=1}^{l}\mathbb{R}^{d_{i}},$ since we have fixed the scalar product $\langle\cdot,\cdot\rangle_{H},$  we say that $\mathcal{E}\subset\otimes_{i=1}^{l}\mathbb{R}^{d_{i}}$ is an ellipsoid if $\mathcal{E}=T(B_{2}^{d_{1},\ldots,d_{l}})$ for some linear isomorphism $T:\otimes_{i=1}^{l}\mathbb{R}^{d_{i}}\rightarrow\otimes_{i=1}^{l}\mathbb{R}^{d_{i}}.$ 
\medskip

An ellipsoid $\mathcal{E}\subset\otimes_{i=1}^{l}\mathbb{R}^{d_{i}}$ is a \textit{tensorial ellipsoid} if $\mathcal{E}$ is also a tensorial body in $\otimes_{i=1}^{l}\mathbb{R}^{d_{i}}.$ The set of tensorial ellipsoids in $\otimes_{i=1}^{l}\mathbb{R}^{d_{i}}$  is denoted by $\mathscr{E}_{\otimes}(\otimes_{i=1}^{l}\mathbb{R}^{d_{i}}).$ 
The \textit{Hilbertian tensor product of ellipsoids} $\mathcal{E}_i$, $i=1,\ldots,l,$ introduced in \cite{Aubrun2006}, is defined as:
$$
\mathcal{E}_1\otimes_2\cdots\otimes_2\mathcal{E}_l:=T_1\otimes\cdots\otimes T_l(B_{2}^{d_{1},\ldots,d_{l}}),
$$ for $\mathcal{E}_i=T_i(B^{d_{i}}_2).$
It does not depend on the election of the maps $T_i.$ The product $\otimes_2$ gives   examples of tensorial ellipsoids. In this case,  $\mathcal{E}_1\otimes_2\cdots\otimes_2\mathcal{E}_l$ is the closed unit ball of the Hilbert tensor product $\otimes_{H,i=1}^l(\mathbb{R}^{d_{i}},g_{\mathcal{E}_{i}}).$ In particular, the Euclidean ball $B_2^{d_1,\ldots,d_l}$ is such that 
\begin{equation}
\label{eq:euclidean ball is tensorial}
B_2^{d_1,\ldots,d_l}=B_2^{d_1}\otimes_2\cdots\otimes_{2}B_2^{d_l}.
\end{equation}

In \cite[Section 4]{tensorialbodies}, the ellipsoids in the class of tensorial bodies are completely described. It is proved that if $\mathcal{E}$ is a tensorial ellipsoid in $\otimes_{i=1}^{l}\mathbb{R}^{d_{i}}$, then there exist $T_{i}\in GL(d_i),$ $i=1,\ldots,l,$ such that  
\begin{equation}
\label{eq:tensorial ellipsoids char}
\mathcal{E}=T_{1}\otimes\cdots\otimes T_{l}(B_2^{d_1,\ldots,d_l})=T_{1}(B_{2}^{d_{1}})\otimes_2\cdots\otimes_{2}T_{l}(B_{2}^{d_{l}}).
\end{equation}
This shows that tensorial ellipsoids are the image of $B_2^{d_1,\ldots,d_l}$ by elements of $GL_{\otimes}(\otimes_{i=1}^{l}\mathbb{R}^{d_{i}}).$ See \cite[Corollary 4.3]{tensorialbodies}.

\section{The space of tensorial bodies $\mathcal{B}_{\otimes}(\otimes_{i=1}^{l}\mathbb{R}^{d_{i}})$}
\label{sec:hyperspace of convex bodies}

For every pair $P,Q$ of non-empty compact sets contained in a Euclidean space $\mathbb{E}$, the Hausdorff distance $\delta^{H}(P,Q)$ is defined as:
\begin{equation*}
\delta^{H}(P,Q):=\max\left\{ \underset{x\in P}{\sup}\underset{y\in Q}{\inf}\| x-y\|_{\mathbb{E}} ,\underset{y\in Q}{\sup}\underset{x\in P}{\inf}\| y-x\|_{\mathbb{E}} \right\}
\end{equation*}
or, equivalently, by $\delta^{H}(P,Q)=\min\left\{ \lambda\geq0:P\subseteq Q+\lambda B_{\mathbb{E}} ,Q\subseteq P+\lambda B_{\mathbb{E}} \right\}$.
If $P,Q$ are, in addition, $0$-symmetric convex bodies, then we have  the following well known characterization of $\delta^{H},$ see \cite[Theorem 1.8.11]{Schneider1993}:
\begin{equation}
\label{eq:car Hausdorff distance}
\delta^{H}(P,Q)=\underset{x\in\partial B_{\mathbb{E}}} {\sup}\left|g_{P^{\circ}}\left(x\right)-g_{Q^{\circ}}(x)\right|.
\end{equation}

Below, we prove that both the projective $\otimes_\pi$ and the injective $\otimes_\epsilon$ tensor products  of $0$-symmetric convex bodies are continuous functions with respect  to $\delta^{H}$ (Proposition \ref{prop:porducto projectivo e injectivo es continuo con hauss}). 

\begin{lem}
\label{lem:convergencia hausdorff implica convegencia de normas}For
every sequence $\{ C_{n}\}_{n\in\mathbb{N}}\subset\mathcal{B}(d)$ and
$C\in \mathcal{B}\left(d\right),$ we have:
\begin{enumerate}
\item If $g_{C_{n}}\left(\cdot\right)$ converges uniformly on $\partial{B_{2}^{d}}$
to $g_{C}\left(\cdot\right)$, then the same holds for $g_{C_{n}^{\circ}}\left(\cdot\right)$ and $g_{C^{\circ}}\left(\cdot\right).$


\item If $C_{n}$ converges to $C$, in the Hausdorff distance, then $g_{C_{n}}\left(\cdot\right)$
converges uniformly on $\partial{B_{2}^{d}}$ to $g_{C}\left(\cdot\right)$. In particular,
$C_{n}^{\circ}$ goes to $C^{\circ}$ in the Hausdorff distance.
\end{enumerate}
\end{lem}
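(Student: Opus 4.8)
The plan is to establish the two items in sequence, with item (2) actually giving the core estimate and item (1) following by a polarity/duality argument combined with the characterization (\ref{eq:car Hausdorff distance}) of the Hausdorff distance.

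First I would prove item (2). The starting point is that for a $0$-symmetric convex body $C$, the Minkowski functional $g_C$ and the support function $h_C = g_{C^\circ}$ are related by polarity, and convergence of bodies in the Hausdorff metric is equivalent to uniform convergence of support functions on $\partial B_2^d$ (this is classical, e.g. \cite[Theorem 1.8.11]{Schneider1993}, and is essentially (\ref{eq:car Hausdorff distance}) applied to the polar bodies). So I would first note $\delta^H(C_n,C)\to 0$ iff $g_{C_n^\circ}\to g_{C^\circ}$ uniformly on $\partial B_2^d$. The real content is then to pass from uniform convergence of $g_{C_n^\circ}$ to uniform convergence of $g_{C_n}$. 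Here I would use that all the bodies $C_n$ (for $n$ large) and $C$ are trapped between two fixed Euclidean balls $r B_2^d \subseteq C_n, C \subseteq R B_2^d$ with $0<r\le R<\infty$: this follows because $C_n\to C$ in $\delta^H$ and $C$ has nonempty interior, so $C$ contains some ball $2r B_2^d$ and is contained in some ball $\tfrac12 R B_2^d$, and for large $n$ the same holds with the stated constants. On such a uniformly bounded-and-bounded-away-from-zero family, the map $g\mapsto g^\circ$ (equivalently, the pointwise-on-the-sphere relation between a gauge and its polar gauge) is uniformly continuous; concretely, one can write $g_{C_n}(x) = \sup_{y\in\partial B_2^d} \frac{\langle x,y\rangle}{g_{C_n^\circ}(y)}$ for $x\in\partial B_2^d$ (the support-function formula for the polar), and then estimate $|g_{C_n}(x) - g_C(x)|$ by the uniform distance $\|g_{C_n^\circ} - g_{C^\circ}\|_\infty$ divided by $r^2$, using $r\le g_{C_n^\circ}\le R$. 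This gives $g_{C_n}\to g_C$ uniformly, and the ``in particular'' clause $C_n^\circ\to C^\circ$ is then immediate from (\ref{eq:car Hausdorff distance}) together with the equivalence just recalled.

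For item (1), the argument is the symmetric one: uniform convergence $g_{C_n}\to g_C$ on $\partial B_2^d$ forces (again by the uniform ball bounds, which now follow from $g_C$ being bounded above and below on the sphere, hence also $g_{C_n}$ for large $n$) uniform convergence $g_{C_n^\circ}\to g_{C^\circ}$, via the same inequality $|g_{C_n^\circ}(x) - g_{C^\circ}(x)| \le \|g_{C_n} - g_C\|_\infty / r^2$ after writing $g_{C_n^\circ}(x) = \sup_{y\in\partial B_2^d}\frac{\langle x,y\rangle}{g_{C_n}(y)}$. In fact items (1) and (2) are two readings of the same lemma: (1) is ``gauge-convergence $\Rightarrow$ polar-gauge-convergence'' and (2) is ``Hausdorff-convergence $\Leftrightarrow$ gauge-convergence, plus the polar consequence'', and the single estimate above handles both.

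The step I expect to be the main obstacle is making the uniform ball bounds $rB_2^d \subseteq C_n \subseteq RB_2^d$ rigorous uniformly in $n$ in each of the two hypotheses, and then being careful that the elementary inequality $|g_{C_n^\circ}(x)-g_{C^\circ}(x)| \le \|g_{C_n}-g_C\|_\infty/r^2$ really does follow from the variational (sup) formula for the polar gauge — one must check that the supremum defining $g_{C^\circ}$ is attained on $\partial B_2^d$ and bound the difference of two suprema by the sup of the differences of the integrands, controlling the denominators away from zero. Everything else is a routine application of (\ref{eq:car Hausdorff distance}) and the Minkowski correspondence.
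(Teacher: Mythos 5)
Your proof is correct and it takes exactly the route the paper indicates: the paper omits the proof of this lemma, remarking only that it can be proved directly from (\ref{eq:car Hausdorff distance}), and your argument does precisely that, combining (\ref{eq:car Hausdorff distance}) with the gauge/polar-gauge duality $g_{C}(x)=\sup_{y\in\partial B_{2}^{d}}\langle x,y\rangle/g_{C^{\circ}}(y)$ and uniform inner/outer Euclidean ball bounds. The key estimate $\bigl|g_{C_{n}}(x)-g_{C}(x)\bigr|\le r^{-2}\sup_{y\in\partial B_{2}^{d}}\bigl|g_{C_{n}^{\circ}}(y)-g_{C^{\circ}}(y)\bigr|$ (and its dual version) is valid, and the bounds $rB_{2}^{d}\subseteq C_{n},C\subseteq RB_{2}^{d}$ do hold for all large $n$ under either hypothesis, so there is no gap.
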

We do not include the proof of Lemma \ref{lem:convergencia hausdorff implica convegencia de normas}, because it can be directly proved by using (\ref{eq:car Hausdorff distance}). For each $d\in\mathbb{N},$ the function
\begin{align}\label{eq:Lema cotas para continuidad}
\nu:(\mathcal{B}(d),\delta^{H}) & \rightarrow\mathbb{R}\\\nonumber
Q & \mapsto \nu(Q)=\underset{x\in Q}{\sup}\| x\| 
\end{align}
 is uniformly continuous, see \cite[Lemma 4.2]{Antonyan2013}.
%
\begin{lem}
\label{lem:distancia Hausdorff pi}Let $P_{i},Q_{i}\subset\mathbb{R}^{d_{i}},$ $i=1,...,l,$ be $0$-symmetric
convex bodies. Then, for each $i,$ the following holds:  
\[
\delta^{H}(Q_{1}\otimes_{\pi}\cdots\otimes_{\pi}Q_{i}\otimes_{\pi}\cdots\otimes_{\pi}Q_{l},Q_{1}\otimes_{\pi}\cdots\otimes_{\pi}P_{i}\otimes_{\pi}\cdots\otimes_{\pi}Q_{l})\leq\delta^{H}(Q_{i},P_{i})\underset{j\neq i}{\prod}\nu_{j}(Q_{j}).
\]
 Here, $\nu_j:\mathcal{B}(d_j)\rightarrow\mathbb{R}$ is the map of  (\ref{eq:Lema cotas para continuidad}).
\end{lem}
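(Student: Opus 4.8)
The plan is to reduce the multi-factor estimate to a single-factor comparison and then to use the characterization (\ref{eq:car Hausdorff distance}) of the Hausdorff distance together with the product formula (\ref{eq:rc in gauges}) for Minkowski functionals of projective tensor products. Write $R:=Q_{1}\otimes_{\pi}\cdots\otimes_{\pi}Q_{i}\otimes_{\pi}\cdots\otimes_{\pi}Q_{l}$ and $R':=Q_{1}\otimes_{\pi}\cdots\otimes_{\pi}P_{i}\otimes_{\pi}\cdots\otimes_{\pi}Q_{l}$. Both are closed unit balls of reasonable crossnorms on the corresponding tensor spaces (by the remarks preceding Definition \ref{defn:tensorial body}), so by (\ref{eq:car Hausdorff distance}) we have $\delta^{H}(R,R')=\sup_{x\in\partial B_{2}^{d_{1},\ldots,d_{l}}}|g_{R^{\circ}}(x)-g_{R'^{\circ}}(x)|$, where the polars are the corresponding injective tensor products. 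Since $R^{\circ}=Q_{1}^{\circ}\otimes_{\pi}\cdots\otimes_{\pi}Q_{i}^{\circ}\otimes_{\pi}\cdots\otimes_{\pi}Q_{l}^{\circ}$ and likewise for $R'^{\circ}$ with $P_{i}^{\circ}$ in the $i$-th slot, and since the map $\delta^{H}(Q_{i},P_{i})\mapsto\delta^{H}(Q_{i}^{\circ},P_{i}^{\circ})$ is controlled by Lemma \ref{lem:convergencia hausdorff implica convegencia de normas}, one could in principle pass to polars; but it is cleaner to argue directly with $R$ and $R'$, as I now describe.

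First I would estimate $\delta^{H}(R,R')$ using the inclusion form of the Hausdorff distance, $\delta^{H}(R,R')=\min\{\lambda\ge0:R\subseteq R'+\lambda B_{2}^{d_{1},\ldots,d_{l}},\ R'\subseteq R+\lambda B_{2}^{d_{1},\ldots,d_{l}}\}$, combined with the fact that $R=\mathrm{conv}\{x^{1}\otimes\cdots\otimes x^{i}\otimes\cdots\otimes x^{l}:x^{j}\in Q_{j},\ x^{i}\in Q_{i}\}$. Because the Hausdorff distance between convex hulls of two sets is at most the Hausdorff distance between the sets themselves, and because $R$ and $R'$ are convex, it suffices to bound $\delta^{H}$ between the generating sets $\otimes(Q_{1},\ldots,Q_{i},\ldots,Q_{l})$ and $\otimes(Q_{1},\ldots,P_{i},\ldots,Q_{l})$. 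For a decomposable vector $x^{1}\otimes\cdots\otimes x^{i}\otimes\cdots\otimes x^{l}$ with $x^{i}\in Q_{i}$, pick (by definition of $\delta^{H}(Q_{i},P_{i})$) a point $y^{i}\in P_{i}$ with $\|x^{i}-y^{i}\|\le\delta^{H}(Q_{i},P_{i})$; then
\[
\|x^{1}\otimes\cdots\otimes x^{i}\otimes\cdots\otimes x^{l}-x^{1}\otimes\cdots\otimes y^{i}\otimes\cdots\otimes x^{l}\|_{H}=\Big(\prod_{j\neq i}\|x^{j}\|\Big)\|x^{i}-y^{i}\|\le\delta^{H}(Q_{i},P_{i})\prod_{j\neq i}\nu_{j}(Q_{j}),
\]
using that $\|x^{j}\|\le\nu_{j}(Q_{j})$ for $x^{j}\in Q_{j}$ and the multiplicativity of the Hilbertian norm on decomposable vectors. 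Swapping the roles of $Q_{i}$ and $P_{i}$ gives the symmetric inclusion, so $\delta^{H}$ between the two generating sets — hence between their convex hulls $R$ and $R'$ — is at most $\delta^{H}(Q_{i},P_{i})\prod_{j\neq i}\nu_{j}(Q_{j})$, which is the claim.

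I expect the only genuine subtlety to be the passage from the Hausdorff distance between the generating sets to the Hausdorff distance between their closed convex hulls: one must invoke (or recall) the elementary fact that $\delta^{H}(\mathrm{conv}\,A,\mathrm{conv}\,B)\le\delta^{H}(A,B)$ for bounded sets $A,B$, which follows because $A\subseteq B+\lambda B_{\mathbb{E}}$ implies $\mathrm{conv}\,A\subseteq\mathrm{conv}(B+\lambda B_{\mathbb{E}})=\mathrm{conv}\,B+\lambda B_{\mathbb{E}}$ by convexity of the Euclidean ball. Everything else is a direct computation with decomposable vectors and the definition of $\nu_{j}$. An alternative, slightly more computational route would be to write both $\delta^{H}(R,R')$ via (\ref{eq:car Hausdorff distance}) and estimate $|g_{R^{\circ}}(x)-g_{R'^{\circ}}(x)|$ pointwise by expanding $g_{R^{\circ}}$ over decomposable vectors using the duality $g_{R^{\circ}}(x)=\sup\{\langle x,u\rangle_{H}:u\in R\}$ and the description of $R$ as a convex hull of decomposables; this avoids the convex-hull lemma but requires a telescoping argument to isolate the $i$-th factor, so I would prefer the first approach.
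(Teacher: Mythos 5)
Your proof is correct and follows essentially the same route as the paper's: you reduce to the generating sets of decomposable vectors via $\delta^{H}(\mathrm{conv}\,A,\mathrm{conv}\,B)\le\delta^{H}(A,B)$ and then perturb only the $i$-th factor, bounding the error by $\delta^{H}(Q_{i},P_{i})\prod_{j\neq i}\nu_{j}(Q_{j})$ (the paper phrases this through the inclusion $Q_{i}\subseteq P_{i}+\lambda B_{2}^{d_{i}}$ and the identity $B_{2}^{d_{1},\ldots,d_{l}}=B_{2}^{d_{1}}\otimes_{2}\cdots\otimes_{2}B_{2}^{d_{l}}$, whereas you compute the Hilbert norm of the decomposable difference directly, which is an equivalent step). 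The only slip is the side remark in your first paragraph: $R^{\circ}$ is the injective tensor product $Q_{1}^{\circ}\otimes_{\epsilon}\cdots\otimes_{\epsilon}Q_{l}^{\circ}$, not the projective one, but since you abandon that route it does not affect your argument.
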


\begin{proof}
Let us fix $i\in\{ 1,...,l\}.$ Observe that from the definition of $\otimes_\pi$ and the properties of the Hausdorff distance \cite[p. 51]{Schneider1993}, it follows that
\begin{gather}
\label{eq:Hausdorff distance of sigmas}\delta^{H}(Q_{1}\otimes_{\pi}\cdots\otimes_{\pi}Q_{i}\otimes_{\pi}\cdots\otimes_{\pi}Q_{l},Q_{1}\otimes_{\pi}\cdots\otimes_{\pi}P_{i}\otimes_{\pi}\cdots\otimes_{\pi}Q_{l})\leq\\
\delta^{H}(\otimes(Q_{1},\ldots,Q_{i},\ldots,Q_{l}),\otimes(Q_{1},\ldots,P_{i},\ldots,Q_{l})).\nonumber
\end{gather}

On the other hand,  if we take $\lambda\geq\delta^{H}(P_i,Q_i)$ then $P_{i}\subseteq Q_{i}+\lambda B_{2}^{d_{i}}$
and $Q_{i}\subseteq P_{i}+\lambda B_{2}^{d_{i}}.$ Thus, for every
$x^{i}\in Q_{i}$, there exists $y^{i}\in P_{i}$ and $u_{i}\in B_{2}^{d_{i}}$  such that $x^{i}=y^{i}+\lambda u_{i}.$ Hence, for every $x^{j}\in Q_{j},$ $j=1,...,i,...,l,$ one has
\begin{align*}
x^{1}\otimes\cdots\otimes x^{i}\otimes\cdots\otimes x^{l} & =x^{1}\otimes\cdots\otimes(y^{i}+\lambda u_{i})\otimes\cdots\otimes x^{l}\\
 & =x^{1}\otimes\cdots\otimes y^{i}\otimes\cdots\otimes x^{l}+x^{1}\otimes\cdots\otimes\lambda u_{i}\otimes\cdots\otimes x^{l}.
\end{align*}
Since $x^{1}\otimes\cdots\otimes y^{i}\otimes\cdots\otimes x^{l}\in\otimes(Q_{1},\ldots,P_{i},\ldots,Q_{l}),$
$x^{j}\in\nu_{j}(Q_{j})B_{2}^{d_{j}}$ for $j\neq i,$ then by (\ref{eq:euclidean ball is tensorial}),
we have
\[
\otimes(Q_{1},\ldots,Q_{i},\ldots,Q_{l})\subseteq\otimes(Q_{1},\ldots,P_{i},\ldots,Q_{l})+\lambda\underset{j\neq i}{\prod}\nu_{j}(Q_{j})B_{2}^{d_{1},...,d_{l}}.
\]

In a similar way, the above inclusion also holds if we exchange $\otimes(Q_{1},\ldots,Q_{i},\ldots,Q_{l})$ and $\otimes(Q_{1},\ldots,P_{i},\ldots,Q_{l}).$
 Therefore,
$$
\delta^{H}(\otimes(Q_{1},\ldots,Q_{i},\ldots,Q_{l}),\otimes(Q_{1},\ldots,P_{i},\ldots,Q_{l}))\leq\lambda\underset{j\neq i}{\prod}\nu_{j}(Q_{j})
$$
and  the result then follows from (\ref{eq:Hausdorff distance of sigmas}). 
\end{proof}

\begin{prop}
\label{prop:porducto projectivo e injectivo es continuo con hauss}The projective $\otimes_\pi$ and the injective $\otimes_\epsilon$ tensor products are continuous functions. That is, for $\alpha=\pi,\epsilon,$ 
\begin{align*}
\otimes_{\alpha}:(\mathcal{B}(d_{1}),\delta^{H})\times\cdots\times(\mathcal{B}(d_{l}),\delta^{H}) & \rightarrow(\mathcal{B}(\otimes_{i=1}^{l}\mathbb{R}^{d_{i}}),\delta^{H})\\
(Q_{1},\ldots,Q_{l}) & \mapsto Q_{1}\otimes_{\alpha}\cdots\otimes_{\alpha}Q_{l}
\end{align*}
is continuous.
\end{prop}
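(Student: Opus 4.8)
The plan is to reduce the continuity of $\otimes_\alpha$ in all variables simultaneously to the one-variable estimate already established in Lemma~\ref{lem:distancia Hausdorff pi}, exploiting the fact that the map $\nu_j$ of (\ref{eq:Lema cotas para continuidad}) is (uniformly) continuous, hence locally bounded. First I would treat the projective case $\alpha=\pi$. Fix a tuple $(Q_1,\ldots,Q_l)$ and let $(P_1,\ldots,P_l)$ be another tuple. The standard trick is to interpolate one coordinate at a time: write a telescoping chain of $l+1$ tuples
\[
(Q_1,\ldots,Q_l),\ (P_1,Q_2,\ldots,Q_l),\ (P_1,P_2,Q_3,\ldots,Q_l),\ \ldots,\ (P_1,\ldots,P_l),
\]
apply $\otimes_\pi$ to each, and use the triangle inequality for $\delta^H$ together with Lemma~\ref{lem:distancia Hausdorff pi} applied to each consecutive pair (which differ in exactly one slot). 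This yields a bound of the form
\[
\delta^H(Q_1\otimes_\pi\cdots\otimes_\pi Q_l,\ P_1\otimes_\pi\cdots\otimes_\pi P_l)\ \leq\ \sum_{i=1}^{l}\delta^H(Q_i,P_i)\,C_i,
\]
where each constant $C_i$ is a product of values $\nu_j(\cdot)$ taken at either $Q_j$ or $P_j$. Since $\nu_j$ is continuous, restricting $(P_1,\ldots,P_l)$ to a bounded neighborhood of $(Q_1,\ldots,Q_l)$ in the product of Hausdorff metrics makes all the $\nu_j(P_j)$ (and trivially the $\nu_j(Q_j)$) uniformly bounded by some constant $M$; hence $C_i\le M^{l-1}$ for all $i$, and the right-hand side goes to $0$ as $(P_1,\ldots,P_l)\to(Q_1,\ldots,Q_l)$. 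This proves joint continuity of $\otimes_\pi$.

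For the injective case I would not redo the estimate but instead use the definition $Q_1\otimes_\epsilon\cdots\otimes_\epsilon Q_l=(Q_1^\circ\otimes_\pi\cdots\otimes_\pi Q_l^\circ)^\circ$ together with the continuity of polarity. Concretely, $\otimes_\epsilon$ is the composition of three maps: the map $(Q_1,\ldots,Q_l)\mapsto(Q_1^\circ,\ldots,Q_l^\circ)$, the map $\otimes_\pi$ just shown to be continuous, and finally $P\mapsto P^\circ$ on $\mathcal{B}(\otimes_{i=1}^l\mathbb{R}^{d_i})$. By Lemma~\ref{lem:convergencia hausdorff implica convegencia de normas}(2), $C_n\to C$ in $\delta^H$ implies $C_n^\circ\to C^\circ$ in $\delta^H$, so polarity is a continuous self-map of each $\mathcal{B}(d_i)$ and of $\mathcal{B}(\otimes_{i=1}^l\mathbb{R}^{d_i})$. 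Hence $\otimes_\epsilon$, being a composition of continuous maps, is continuous.

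The only real obstacle is the one I have already flagged: the constants in Lemma~\ref{lem:distancia Hausdorff pi} depend on the convex bodies through $\nu_j$, so a naive sum of bounds would not immediately give continuity if those constants could blow up. The resolution is simply to work locally and invoke the continuity (equivalently, local boundedness) of $\nu_j$ to dominate all the constants appearing in the telescoping sum by a single $M^{l-1}$ on a neighborhood. Everything else is bookkeeping: fixing the order of the interpolation steps so that consecutive tuples differ in exactly one coordinate (so Lemma~\ref{lem:distancia Hausdorff pi} applies verbatim after relabeling), and noting that each application of that lemma also requires $\otimes_\pi$ to be symmetric enough under permutation of its arguments that "the $i$-th slot" can be brought into the position treated by the lemma — which is immediate from the definition of $Q_1\otimes_\pi\cdots\otimes_\pi Q_l$ as the convex hull of $\otimes(Q_1,\ldots,Q_l)$, a set manifestly symmetric under permuting the factors together with the corresponding permutation of $\otimes_{i=1}^l\mathbb{R}^{d_i}$.
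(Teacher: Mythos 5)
Your proposal is correct and follows essentially the same route as the paper: the projective case is handled by the same telescoping/one-coordinate-at-a-time argument via Lemma \ref{lem:distancia Hausdorff pi} with the constants controlled through the continuity of $\nu_j$, and the injective case by writing $\otimes_\epsilon$ as a composition of polarity, $\otimes_\pi$, and polarity, using Lemma \ref{lem:convergencia hausdorff implica convegencia de normas}. No gaps; the permutation-symmetry worry you flag is harmless since the lemma is already stated for an arbitrary slot $i$.
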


\begin{proof}
First we prove that $\otimes_{\pi}$ is continuous.
To that end for each $i=1,...,l,$ let $\{ Q_{i}^{n}\} _{n\in\mathbb{N}}$
be a sequence in $\mathcal{B}(d_{i})$ converging to $Q_{i}\in\mathcal{B}(d_{i}).$ From the triangle inequality and Lemma
\ref{lem:distancia Hausdorff pi}, we have
\begin{gather*}
\delta^{H}\left(Q_{1}\otimes_{\pi}\cdots\otimes_{\pi}Q_{l},Q^n_{1}\otimes_{\pi}\cdots\otimes_{\pi}Q^n_{l}\right)\leq\\
{\sum}_{i=1}^{l}\delta^{H}\left(Q^n_{1}\otimes_{\pi}\cdots\otimes_{\pi}Q^n_{i-1}\otimes_{\pi}Q_{i}\otimes_{\pi}Q_{i+1}\otimes_{\pi}\cdots\otimes_{\pi}Q_{l},\right.\\
\left. Q^n_{1}\otimes_{\pi}\cdots\otimes_{\pi}Q^n_{i-1}\otimes_{\pi}Q^n_{i}\otimes_{\pi}Q_{i+1}\otimes_{\pi}\cdots\otimes_{\pi}Q_{l}\right)\leq\\
{\sum}_{i=1}^{l}\delta^{H}\left(Q_i,Q^n_i\right)\prod_{j<i}\nu_j\left(Q^n_j\right)\prod_{j>i}\nu_j\left(Q_j\right).
\end{gather*}
Thus, by the continuity of $\nu_{i}$ (\cite[Lemma 4.2]{Antonyan2013})
and the fact that $Q_{i}^{n}$ converges to $Q_{i},$ it follows that $\delta^{H}(Q_{1}\otimes_{\pi}\cdots\otimes_{\pi}Q_{l},Q_{1}^{n}\otimes_{\pi}\cdots\otimes_{\pi}Q_{l}^{n})$ goes to $0.$ This proves the continuity of $\otimes_{\pi}$.

To prove that $\otimes_{\epsilon}$ is continuous, observe
that if $Q_i^n$ and $Q_i$ are as above, then, by Lemma \ref{lem:convergencia hausdorff implica convegencia de normas},
$(Q_{i}^{n})^{\circ}$ converges to $Q_{i}^{\circ}.$ Thus, by the continuity of $\otimes_{\pi},$
$
(Q_{1}^{n})^{\circ}\otimes_{\pi}\cdots\otimes_{\pi}(Q_{l}^{n})^{\circ}$ converges to $Q_{1}^{\circ}\otimes_{\pi}\cdots\otimes_{\pi}Q_{l}^{\circ}.$ Hence, from Lemma \ref{lem:convergencia hausdorff implica convegencia de normas}, 
$((Q_{1}^{n})^{\circ}\otimes_{\pi}\cdots\otimes_{\pi}(Q_{l}^{n})^{\circ})^{\circ}$ converges to $(Q_{1}^{\circ}\otimes_{\pi}\cdots\otimes_{\pi}Q_{l}^{\circ})^{\circ}.$ This shows that
$\delta^{H}(Q_{1}^{n}\otimes_{\epsilon}\cdots\otimes_{\epsilon}Q_{l}^{n},Q_{1}\otimes_{\epsilon}\cdots\otimes_{\epsilon}Q_{l})$ goes to $0,$  and so the continuity of $\otimes_\epsilon$ is proved.
\end{proof}

\subsection{Basic Properties}
\label{ref:basic properties}

From now on, we will refer to $\mathcal{B}_{\otimes}\left(\otimes_{i=1}^{l}\mathbb{R}^{d_{i}}\right)$ as the space of tensorial bodies. Unless otherwise state the topology on it will be the one determined by the Hausdorff distance. In this way, $\mathcal{B}_{\otimes}\left(\otimes_{i=1}^{l}\mathbb{R}^{d_{i}}\right)$ is a topological subspace of the space of $0$-symmetric convex bodies in $\otimes_{i=1}^{l}\mathbb{R}^{d_{i}}.$ Here we will prove that it is a closed and contractible (topological) subspace. See Propositions \ref{prop:Bsigma is closed} and \ref{prop: tensorial bodies are contractil}.

 To shorten notation, when no confusion can arise, we write simply $\mathcal{B}_{\otimes}$ or $\mathcal{B}_{Q_1,\ldots,Q_l}$ instead of  $\mathcal{B}_{\otimes}(\otimes_{i=1}^{l}\mathbb{R}^{d_{i}})$ and $\mathcal{B}_{Q_1,\ldots,Q_l}(\otimes_{i=1}^{l}\mathbb{R}^{d_{i}}).$ 
Recall that for every $0$-symmetric convex body $Q,$ $g_Q$ denotes its Minkowski functional. Also, if $Q\subset\otimes_{i=1}^{l}\mathbb{R}^{d_{i}}$ then $Q^i,$ $i=1,\ldots,l,$ are the $0$-symmetric convex bodies of Remark \ref{rem: canonical section}. 

\begin{prop}
\label{prop:Bsigma is closed}
The following statements hold:
\begin{enumerate}
\item 
Let $Q_i\subset\mathbb{R}^{d_i},$ $i=1,\ldots,l,$ be $0$-symmetric convex bodies. Then  $\mathcal{B}_{Q_{1},...,Q_{l}}\left(\otimes_{i=1}^{l}\mathbb{R}^{d_{i}}\right)$ is a compact convex subset of the space of $0$-symmetric convex bodies in $\otimes_{i=1}^{l}\mathbb{R}^{d_{i}}.$
\item $\mathcal{B}_{\otimes}(\otimes_{i=1}^{l}\mathbb{R}^{d_{i}})$ is closed in the space of $0$-symmetric convex bodies in $\otimes_{i=1}^{l}\mathbb{R}^{d_{i}}.$
\end{enumerate}
\end{prop}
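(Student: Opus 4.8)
For part (1), the plan is to verify the two properties — convexity and compactness — separately, working throughout with Minkowski functionals and the characterization \eqref{eq: inclusion convex bodies rc norms}. Fix the bodies $Q_1,\ldots,Q_l$ and write $P := Q_1\otimes_\pi\cdots\otimes_\pi Q_l$ and $R := Q_1\otimes_\epsilon\cdots\otimes_\epsilon Q_l$, so that $\mathcal{B}_{Q_1,\ldots,Q_l} = \{Q\in\mathcal{B}(\otimes_{i=1}^l\mathbb{R}^{d_i}) : P\subseteq Q\subseteq R\}$. For convexity, I would pass to gauges: if $Q,Q'$ both satisfy $P\subseteq \cdot\subseteq R$ and $t\in[0,1]$, then the body with Minkowski functional $t g_Q + (1-t) g_{Q'}$ is a $0$-symmetric convex body (a convex combination of norms is a norm), and the inclusions $P\subseteq Q,Q'$ translate to $g_Q,g_{Q'}\le g_P$ pointwise, hence $t g_Q+(1-t)g_{Q'}\le g_P$, i.e. $P$ is contained in the new body; symmetrically for the upper inclusion with $R$. (One must note that the convex-combination operation on $\mathcal{B}(d)$ used here is exactly the affine structure making this set "convex" in the sense intended — this is standard and presumably fixed earlier, but I would state it.) For compactness, since $\mathcal{B}_{Q_1,\ldots,Q_l}$ sits inside the space of $0$-symmetric convex bodies with $\delta^H$, and all its members lie between the fixed bodies $P$ and $R$, the set is uniformly bounded and uniformly bounded away from $0$ (all contain $P$, all are contained in $R$); by the Blaschke selection theorem it is relatively compact, so it suffices to show it is closed. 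If $Q_n\to Q$ in $\delta^H$ with $P\subseteq Q_n\subseteq R$ for all $n$, then the inclusions pass to the limit (the relations $A\subseteq B$ are closed under Hausdorff limits), so $Q\in\mathcal{B}_{Q_1,\ldots,Q_l}$; one also checks $Q$ still has nonempty interior, which is automatic since $P\subseteq Q$ and $P$ has nonempty interior.

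For part (2), the plan is to show $\mathcal{B}_\otimes = \bigcup_{(Q_1,\ldots,Q_l)} \mathcal{B}_{Q_1,\ldots,Q_l}$ is closed by a direct sequential argument. Suppose $Q_n\in\mathcal{B}_\otimes$ and $Q_n\to Q$ in $\delta^H$, with $Q$ a $0$-symmetric convex body. For each $n$ choose, using Remark \ref{rem: canonical section} and Theorem \ref{thm:equivalence of tensor body}, the canonical bodies $Q_n^i\subset\mathbb{R}^{d_i}$ so that $Q_n^1\otimes_\pi\cdots\otimes_\pi Q_n^l\subseteq Q_n\subseteq Q_n^1\otimes_\epsilon\cdots\otimes_\epsilon Q_n^l$. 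I would first argue that the sequences $\{Q_n^i\}_n$ are bounded and bounded away from $0$ in $\mathcal{B}(d_i)$: indeed $g_{Q_n^i}$ is, up to a scalar depending on $g_{Q_n}(e_1^{d_1}\otimes\cdots\otimes e_1^{d_l})$, a "slice norm" of $g_{Q_n}$ in the sense of \eqref{eq: Q1 Ql}, and since $Q_n\to Q$ the gauges $g_{Q_n}$ converge uniformly on $\partial B_2^{d_1,\ldots,d_l}$ (Lemma \ref{lem:convergencia hausdorff implica convegencia de normas}), which controls these slices uniformly. By Blaschke selection applied coordinatewise and passing to a subsequence, $Q_n^i\to Q_*^i$ for some $0$-symmetric convex body $Q_*^i\subset\mathbb{R}^{d_i}$. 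By Proposition \ref{prop:porducto projectivo e injectivo es continuo con hauss}, $Q_n^1\otimes_\pi\cdots\otimes_\pi Q_n^l\to Q_*^1\otimes_\pi\cdots\otimes_\pi Q_*^l$ and likewise for $\otimes_\epsilon$; since the inclusion relation is closed under Hausdorff limits, we get $Q_*^1\otimes_\pi\cdots\otimes_\pi Q_*^l\subseteq Q\subseteq Q_*^1\otimes_\epsilon\cdots\otimes_\epsilon Q_*^l$, i.e. $Q\in\mathcal{B}_{Q_*^1,\ldots,Q_*^l}\subseteq\mathcal{B}_\otimes$.

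The step I expect to be the main obstacle is establishing that the slice bodies $Q_n^i$ stay bounded and uniformly non-degenerate, so that Blaschke selection applies; this is where one genuinely uses the specific normalization in Remark \ref{rem: canonical section} rather than an arbitrary choice of slices (an arbitrary choice could degenerate even when $Q_n$ converges nicely). Concretely, I would need the normalizing scalar $\lambda_n = g_{Q_n}(e_1^{d_1}\otimes\cdots\otimes e_1^{d_l})^{-1}$ to be bounded above and below, which follows from $g_{Q_n}\to g_Q$ uniformly on the sphere together with $g_Q$ being a genuine norm (hence bounded away from $0$ and $\infty$ on the sphere); and then the slice gauge identities from Theorem \ref{thm:equivalence of tensor body}, namely $g_{Q_n^i}(\cdot) = \|a^i\|_i^{-1}\|\cdot\|_i$, combined with the sandwich $\epsilon\le g_{Q_n}\le \pi$ for the associated crossnorms, pin down the slices up to uniformly controlled scalars. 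Everything else — convexity, closedness of inclusions under limits, the non-degeneracy of the limit body — is routine once this compactness input is in hand.
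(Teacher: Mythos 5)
Your proposal is essentially correct, but it differs from the paper at two points, one of which needs a small repair. For part (1), the paper proves convexity with respect to Minkowski combinations: $tQ+(1-t)P$ lies between $Q_1\otimes_\pi\cdots\otimes_\pi Q_l$ and $Q_1\otimes_\epsilon\cdots\otimes_\epsilon Q_l$ because each of these products is invariant under such combinations. You instead take the body whose gauge is $tg_Q+(1-t)g_{Q'}$, i.e. $\left(tQ^\circ+(1-t)(Q')^\circ\right)^\circ$, which is the dual affine structure, not the one intended: the convexity of $\mathcal{B}_{Q^1,\ldots,Q^l}$ is used later (Proposition \ref{prop: tensorial bodies are contractil}) precisely for the Minkowski combination $(1-2t)Q+2t(Q^1\otimes_\pi\cdots\otimes_\pi Q^l)$, which your version would not justify. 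The repair is immediate: your sandwich argument works verbatim for Minkowski combinations, since $tQ+(1-t)Q'\supseteq tP+(1-t)P=P$ and $tQ+(1-t)Q'\subseteq tR+(1-t)R=R$. Your compactness argument (Blaschke selection plus the fact that inclusions between compact sets persist under Hausdorff limits, nonempty interior coming from $P\subseteq Q$) is correct and in fact more elementary than the paper's, which passes to gauges via Lemma \ref{lem:convergencia hausdorff implica convegencia de normas} and invokes Proposition 3.1 of \cite{tensorialbodies}.

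For part (2) you take a genuinely different route. The paper settles closedness in two lines: Hausdorff convergence gives uniform convergence of gauges (Lemma \ref{lem:convergencia hausdorff implica convegencia de normas}), and Proposition 3.7 of \cite{tensorialbodies} is cited as a black box to conclude the limit is tensorial. You instead argue internally: take the canonical slices $Q_n^i$ of Remark \ref{rem: canonical section}, show they are uniformly bounded and uniformly contain a ball, pass to subsequential limits $Q_*^i$ by Blaschke, and combine Theorem \ref{thm:equivalence of tensor body}(3) with the continuity of $\otimes_\pi,\otimes_\epsilon$ (Proposition \ref{prop:porducto projectivo e injectivo es continuo con hauss}) and persistence of inclusions under limits. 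Your identification of the uniform non-degeneracy of the slices as the crux is right, and it does go through: since $Q_n\to Q$ with $Q$ a body, for large $n$ one has $\tfrac{r}{2}B_2^{d_1,\ldots,d_l}\subseteq Q_n\subseteq 2R\,B_2^{d_1,\ldots,d_l}$, which bounds $\lambda_n=g_{Q_n}(e_1^{d_1}\otimes\cdots\otimes e_1^{d_l})^{-1}$ above and below and hence sandwiches each $Q_n^i$ between fixed multiples of $B_2^{d_i}$. Your argument is longer but self-contained within this paper, whereas the paper's is shorter at the cost of importing the limit statement for gauges from the companion paper; both are valid.
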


\begin{proof}
(1) The convexity of $\mathcal{B}_{Q_{1},...,Q_{l}}$ follows directly from the properties of the Minkowski sum. In this sense, for any convex body and any $0\leq t\leq 1.$ If $\alpha=\pi,\epsilon$ then $tQ_{1}\otimes_{\alpha}\cdots\otimes_{\alpha}Q_{l}+\left(1-t\right)Q_{1}\otimes_{\alpha}\cdots\otimes_{\alpha}Q_{l}=Q_{1}\otimes_{\alpha}\cdots\otimes_{\alpha}Q_{l}.$  Hence, by (\ref{eq: inclusion convex bodies rc norms}), if $P,Q\in\mathcal{B}_{Q_{1},...,Q_{l}},$ then $tQ+\left(1-t\right)P\in\mathcal{B}_{Q_{1},...,Q_{l}}.$ 
%

To prove that $\mathcal{B}_{Q_{1},...,Q_{l}}$ is compact, it is enough to show that it is closed. The result then follows by the Blaschke selection theorem \cite[Theorem 1.8.6]{Schneider1993} and the fact that $\mathcal{B}_{Q_{1},...,Q_{l}}$ is bounded (every element is contained in $Q_{1}\otimes_{\epsilon}\cdots\otimes_{\epsilon}Q_{l}$).
 
Let $P_{n}\in\mathcal{B}_{Q_{1},...,Q_{l}}$ be a sequence converging to a $0$-symmetric convex body $P$. We will see that $P$ is a tensorial body in $\mathcal{B}_{Q_{1},...,Q_{l}}.$ By (2) of Lemma \ref{lem:convergencia hausdorff implica convegencia de normas}
we have:
\begin{align*}
g_{P}\left(x^{1}\otimes\cdots\otimes x^{l}\right) =\underset{n\rightarrow\infty}{\lim}g_{P_{n}}\left(x^{1}\otimes\cdots\otimes x^{l}\right)&\overset{*}{=}\underset{n\rightarrow\infty}{\lim}g_{Q_{1}}\left(x^{1}\right)\cdots g_{Q_{l}}\left(x^{l}\right)\\
 &=g_{Q_{1}}\left(x^{1}\right)\cdots g_{Q_{l}}\left(x^{l}\right).
\end{align*}
$(*)$ follows from the fact that each $P_n\in\mathcal{B}_{Q_{1},...,Q_{l}},$ see (\ref{eq:rc in gauges}). 
Similar arguments show that 
$
g_{P^{\circ}}\left(x^{1}\otimes\cdots\otimes x^{l}\right)=g_{\left(Q_{1}\right)^{\circ}}\left(x^{1}\right)\cdots g_{\left(Q_{l}\right)^{\circ}}\left(x^{l}\right).
$
Therefore, by \cite[Proposition 3.1]{tensorialbodies}, $P\in\mathcal{B}_{Q_{1},...,Q_{l}}.$

(2)
The closedness of $\mathcal{B}_\otimes$ follows directly from \cite[Proposition 3.7]{tensorialbodies}. To see this, take a sequence $Q_{n}\in\mathcal{B}_\otimes$
converging to a $0$-symmetric convex body $Q.$ By Lemma \ref{lem:convergencia hausdorff implica convegencia de normas}, $g_{Q_n}$ converges uniformly on compact sets to $g_Q.$ Thus, from \cite[Proposition 3.7]{tensorialbodies}, $Q$ must be a tensorial body.

\end{proof}

\begin{prop}
\label{prop: tensorial bodies are contractil}
The space of tensorial bodies $\mathcal{B}_{\otimes}(\otimes_{i=1}^{l}\mathbb{R}^{d_{i}})$ is contractible. 
\end{prop}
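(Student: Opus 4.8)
The plan is to exhibit an explicit contraction of $\mathcal{B}_{\otimes}(\otimes_{i=1}^{l}\mathbb{R}^{d_{i}})$ onto a single point, the Euclidean ball $B_2^{d_1,\ldots,d_l}$, which is a tensorial body by \eqref{eq:euclidean ball is tensorial}. The natural candidate is the ``straight-line'' (Minkowski interpolation) homotopy: for a tensorial body $Q$ and $t\in[0,1]$, set $H(Q,t):=(1-t)Q+tB_2^{d_1,\ldots,d_l}$, where $+$ is the Minkowski sum. The key point is that this really lands in $\mathcal{B}_{\otimes}$: if $Q$ is a tensorial body with respect to $Q_1,\ldots,Q_l$ (so \eqref{eq: inclusion convex bodies rc norms} holds), then, using \eqref{eq:euclidean ball is tensorial} to write $B_2^{d_1,\ldots,d_l}=B_2^{d_1}\otimes_2\cdots\otimes_2 B_2^{d_l}$ together with the fact that $\otimes_\pi$ is dominated by $\otimes_2$ is dominated by $\otimes_\epsilon$ (equivalently, $\pi(\cdot)\geq\|\cdot\|_H\geq\epsilon(\cdot)$), one gets the chain $B_2^{d_1}\otimes_\pi\cdots\otimes_\pi B_2^{d_l}\subseteq B_2^{d_1,\ldots,d_l}\subseteq B_2^{d_1}\otimes_\epsilon\cdots\otimes_\epsilon B_2^{d_l}$, i.e. $B_2^{d_1,\ldots,d_l}\in\mathcal{B}_{B_2^{d_1},\ldots,B_2^{d_l}}$. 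Then Minkowski-summing the two inclusions \eqref{eq: inclusion convex bodies rc norms} for $Q$ (scaled by $1-t$) and for $B_2^{d_1,\ldots,d_l}$ (scaled by $t$) should place $H(Q,t)$ inside $\mathcal{B}_{(1-t)Q_1+tB_2^{d_1},\,\ldots,\,(1-t)Q_l+tB_2^{d_l}}$, after checking the convexity-type identity for the endpoints; this is essentially the computation already performed in the proof of Proposition \ref{prop:Bsigma is closed}(1). A small subtlety: one must be sure the relevant product bodies interpolate correctly, i.e. that $((1-t)Q_1+tB_2^{d_1})\otimes_\pi\cdots\subseteq (1-t)(Q_1\otimes_\pi\cdots)+t(B_2^{d_1}\otimes_\pi\cdots)$ and the reverse inclusion for $\otimes_\epsilon$, which follows from the multilinear/convexity behavior of $\otimes_\pi$ and the polarity definition of $\otimes_\epsilon$.

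Next I would verify continuity of $H:\mathcal{B}_{\otimes}\times[0,1]\to\mathcal{B}_{\otimes}$ in the Hausdorff metric. Minkowski addition $(P,R)\mapsto P+R$ and scalar multiplication $(t,P)\mapsto tP$ are jointly continuous with respect to $\delta^H$ on the space of compact convex sets (see \cite[Section 1.8]{Schneider1993}), so $H$ is continuous as a composition; one then notes $H(\cdot,0)=\mathrm{id}$, $H(\cdot,1)\equiv B_2^{d_1,\ldots,d_l}$, which is exactly a contraction. Because $\mathcal{B}_{\otimes}$ was shown to be closed (Proposition \ref{prop:Bsigma is closed}(2)) hence a genuine subspace, and because the image of $H$ stays inside $\mathcal{B}_{\otimes}$ by the previous paragraph, this is a contraction within the space itself.

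The main obstacle is the stability step: confirming that $(1-t)Q+tB_2^{d_1,\ldots,d_l}$ is again a tensorial body, i.e. finding the correct ``component'' bodies and checking the two sandwiching inclusions. The cleanest route is to show directly that $H(Q,t)\in\mathcal{B}_{(1-t)Q_1+tB_2^{d_1},\ldots,(1-t)Q_l+tB_2^{d_l}}$ by taking the Minkowski sum of the scaled inclusions and invoking (a) $B_2^{d_1,\ldots,d_l}\in\mathcal{B}_{B_2^{d_1},\ldots,B_2^{d_l}}$ and (b) the inclusions $(1-t)(A_1\otimes_\pi\cdots\otimes_\pi A_l)+t(B_1\otimes_\pi\cdots\otimes_\pi B_l)\subseteq((1-t)A_1+tB_1)\otimes_\pi\cdots\otimes_\pi((1-t)A_l+tB_l)$ and the dual inclusion for $\otimes_\epsilon$. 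Item (b) is the one requiring a short argument: the $\otimes_\pi$ inclusion follows because each decomposable generator $((1-t)a^1+tb^1)\otimes\cdots$ expands, by multilinearity, into a convex combination of mixed decomposable vectors, all lying in the convex hull on the right; the $\otimes_\epsilon$ inclusion follows by polarizing, using $(C_1\otimes_\epsilon\cdots\otimes_\epsilon C_l)^\circ=C_1^\circ\otimes_\pi\cdots\otimes_\pi C_l^\circ$ and the elementary fact that for $0$-symmetric convex bodies $\lambda C+\mu D\subseteq(\lambda C^\circ+\mu D^\circ)^\circ$ when $\lambda+\mu=1$ (a consequence of the reverse-order behavior of polarity under Minkowski addition). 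Alternatively, if one prefers to sidestep (b) entirely, one can replace the linear homotopy by the ``gauge homotopy'' $H'(Q,t):=$ the $0$-symmetric convex body with Minkowski functional $(1-t)g_Q+t\,\|\cdot\|_H$; on decomposable vectors this functional factors correctly by \eqref{eq:rc in gauges} and the analogous identity for $B_2^{d_1,\ldots,d_l}$, so membership in $\mathcal{B}_{\otimes}$ follows from \cite[Proposition 3.1]{tensorialbodies} or Theorem \ref{thm:equivalence of tensor body}, with continuity coming from Lemma \ref{lem:convergencia hausdorff implica convegencia de normas}. Either way the structure of the proof is: define the homotopy, check its endpoints, check continuity, and check invariance of $\mathcal{B}_{\otimes}$ — the last being the only nontrivial ingredient.
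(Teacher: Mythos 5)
Your key step fails: $\mathcal{B}_{\otimes}$ is not convex, and it is not even star-shaped about $B_2^{d_1,\ldots,d_l}$ under Minkowski interpolation, so the straight-line homotopy $H(Q,t)=(1-t)Q+tB_2^{d_1,\ldots,d_l}$ leaves the space. The convexity proved in Proposition \ref{prop:Bsigma is closed}(1) is only convexity of each fiber $\mathcal{B}_{Q_1,\ldots,Q_l}$, i.e.\ it applies when both endpoints are tensorial \emph{with respect to the same components}, which $Q$ and the Euclidean ball are not. Concretely, take $l=2$, $d_1=d_2=2$, $C$ the unit ball of $\ell_1^2$, $Q=C\otimes_\pi C$ and $t=\tfrac12$. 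For any tensorial body $P$ with components $P_1,P_2$ one has $h_P(x\otimes y)=g_{P^{\circ}}(x\otimes y)=g_{P_1^{\circ}}(x)\,g_{P_2^{\circ}}(y)$ (the polar identity accompanying \eqref{eq:rc in gauges}, i.e.\ \cite[Proposition 3.1]{tensorialbodies}), so the support function must factor on decomposable vectors as a product of norms. But $h_{\frac12 Q+\frac12 B_2^{2,2}}(x\otimes y)=\tfrac12\|x\|_\infty\|y\|_\infty+\tfrac12\|x\|_2\|y\|_2$, and this is not of the form $N_1(x)N_2(y)$: putting $x=e_1$ forces $N_2$ proportional to $\tfrac12\|\cdot\|_\infty+\tfrac12\|\cdot\|_2$, while $x=(1,1)$ forces $N_2$ proportional to $\tfrac12\|\cdot\|_\infty+\tfrac{\sqrt2}{2}\|\cdot\|_2$, and these are not proportional. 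Hence $\tfrac12Q+\tfrac12B_2^{2,2}\notin\mathcal{B}_{\otimes}$. Your auxiliary ingredients are also false as stated: the inclusion in item (b) for $\otimes_\pi$ fails (test bodies close to the segments $[-e_1,e_1]$ and $[-e_2,e_2]$ in each factor against the identity tensor: the left side has support value near $1$, the right side near $\tfrac12$), and the ``elementary fact'' $\lambda C+\mu D\subseteq(\lambda C^{\circ}+\mu D^{\circ})^{\circ}$ fails for two thin orthogonal slabs. Your fallback gauge homotopy $(1-t)g_Q+t\|\cdot\|_H$ has the identical defect: a convex combination of two product-norms on decomposable vectors is in general not a product of norms, so the factorization required by Theorem \ref{thm:equivalence of tensor body} fails for the very same example.

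The paper circumvents exactly this obstruction with a two-stage homotopy. On $t\in[0,\tfrac12]$ it deforms $Q$ linearly to $Q^{1}\otimes_\pi\cdots\otimes_\pi Q^{l}$ (the components of Remark \ref{rem: canonical section}); this is legitimate because both endpoints lie in the \emph{same} convex fiber $\mathcal{B}_{Q^1,\ldots,Q^l}$. On $t\in[\tfrac12,1]$ it interpolates the \emph{factors}, $(2-2t)Q^{i}+(2t-1)B_2^{d_i}$, and takes their projective tensor product, which is automatically a tensorial body; continuity comes from Proposition \ref{prop:porducto projectivo e injectivo es continuo con hauss} together with the continuity of $Q\mapsto Q^{i}$ (via Lemma \ref{lem:convergencia hausdorff implica convegencia de normas} and \cite[Proposition 3.7]{tensorialbodies}). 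So the repair of your idea is to interpolate at the level of the component bodies rather than the tensorial bodies themselves; your continuity and endpoint checks are fine, but the invariance step as you set it up cannot be made to work.
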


\begin{proof}
We will define a contracting homotopy from $\mathcal{B}_{\otimes}$ to $B_2^{d_1}\otimes_{\pi}\cdots\otimes_{\pi}B_2^{d_l}.$
For every tensorial body $Q\subset\otimes_{i=1}^{l}\mathbb{R}^{d_{i}}$ and every real number $t\in[0,1],$ let  $H$ be defined as:
$$H(Q,t)=(1-2t)Q+2t(Q^1\otimes_\pi\cdots\otimes_\pi Q^l)\text{ for }0\leq t\leq\frac{1}{2},\text{ and}$$
 $$H(Q,t)=((2-2t)Q^1+(2t-1)B_2^{d_1})\otimes_\pi\cdots\otimes_\pi((2-2t)Q^l+(2t-1)B_2^{d_l}),$$
 for $\frac{1}{2}\leq t\leq 1$. 
Below, we show that $H:\mathcal{B}_{\otimes}\times[0,1]\rightarrow\mathcal{B}_{\otimes}$ is the desired homotopy.
 
To prove that it is well defined, observe that since every tensorial body $Q$ belongs to its corresponding $\mathcal{B}_{Q^1,\ldots,Q^l},$ then, by the convexity of $\mathcal{B}_{Q^1,\ldots,Q^l}$ (Proposition \ref{prop:Bsigma is closed}), it holds that $(1-2t)Q+2t(Q^1\otimes_\pi\cdots\otimes_\pi Q^l)$ is a tensorial body for all $t\in\left[0,\frac{1}{2}\right].$ Also,  by definition of $H,$ $H(Q,t)$ is a tensorial body for $\frac{1}{2}\leq t\leq 1$.
Clearly $H(Q,0)$ is the identity map on $\mathcal{B}_{\otimes}$ and $H(Q,1)=B_2^{d_1}\otimes_{\pi}\cdots\otimes_{\pi}B_2^{d_l}.$ 

It remains to prove the continuity of $H$. To this end, 
let $t_n$ and  $Q_n$ be sequences of  both real numbers in $\left[0,\frac{1}{2}\right]$ and of tensorial bodies converging to $t$ and $Q$ respectively. From Lemma \ref{lem:convergencia hausdorff implica convegencia de normas} and \cite[Proposition 3.7]{tensorialbodies}, $g_{Q_n^{i}}$ converges uniformly (on compact sets) to $g_{Q^i},$ for each $i=1,\ldots,l.$  This, along with  (\ref{eq:car Hausdorff distance}) and (1) in Lemma \ref{lem:convergencia hausdorff implica convegencia de normas}, proves that $Q_n^{i}$ goes to $Q^i,$ for each $i$. Hence, by the continuity of $\otimes_\pi$, $Q_n^{1}\otimes_\pi\cdots\otimes_\pi Q_n^l$ converges to $Q^{1}\otimes_\pi\cdots\otimes_\pi Q^l$ in the Hausdorff distance. Now, by the triangle inequality, we have:
\begin{gather*}
\delta^{H}(H(Q_n,t_n),H(Q,t))\leq\delta^{H}(H(Q_n,t_n),H(Q,t_n))+\delta^{H}(H(Q,t_n),H(Q,t))\\\leq (1-2t_n)\delta^{H}(Q_n,Q)+2t_n\delta^{H}(Q_n^{1}\otimes_\pi\cdots\otimes_\pi Q_n^l,Q^{1}\otimes_\pi\cdots\otimes_\pi Q^l)\\+ 2|t_n-t|(\nu(Q)+\nu(Q^{1}\otimes_\pi\cdots\otimes_\pi Q^l)),
\end{gather*}
where $\nu$ is the map defined in (\ref{eq:Lema cotas para continuidad}). The convergence of $H(Q_n,t_n)$ to $H(Q,t)$ then follows from the last inequality and the previous discussion. In the same manner,  the continuity on $\left[\frac{1}{2},1\right]$ follows from the continuity of both the projective tensor product $\otimes_\pi$ and the map $t\mapsto\ (2-2t)Q^i+(2t-1)B_2^{d_i}$, $i=1,...,l.$ 
\end{proof}

We finish this section with  two results that will be used  in the proof of Theorem \ref{thm:the action is proper}.

\begin{lem}
\label{lem:LEMA natalia}
Let $\varepsilon>0$ and let $P\subset\otimes_{i=1}^{l}\mathbb{R}^{d_{i}}$ be a tensorial body such that $2\varepsilon B_{2}^{d_{1},\ldots,d_{l}}\subseteq P$. If $\delta^{H}\left(P,Q\right)<\varepsilon,$ for some tensorial body $Q$, then $\varepsilon B_{2}^{d_{1},\ldots,d_{l}}\subseteq Q$. 
\end{lem}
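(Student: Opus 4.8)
The plan is to use the characterization of the Hausdorff distance via the Minkowski functional of the polar body, namely (\ref{eq:car Hausdorff distance}), to transfer the containment $2\varepsilon B_2^{d_1,\ldots,d_l}\subseteq P$ into a statement about $g_{P^\circ}$ and then perturb it. First I would recall that for a $0$-symmetric convex body $R\subset\otimes_{i=1}^l\mathbb{R}^{d_i}$, the containment $\varepsilon B_2^{d_1,\ldots,d_l}\subseteq R$ is equivalent to $g_R(x)\le \tfrac{1}{\varepsilon}\|x\|_H$ for all $x$, which (taking $x\in\partial B_2^{d_1,\ldots,d_l}$ and using homogeneity) is equivalent to $\sup_{x\in\partial B_2^{d_1,\ldots,d_l}} g_R(x)\le \tfrac1\varepsilon$. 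Dually, passing to polars, $\varepsilon B_2^{d_1,\ldots,d_l}\subseteq R$ is equivalent to $R^\circ\subseteq \tfrac1\varepsilon B_2^{d_1,\ldots,d_l}$, i.e. $g_{R^\circ}(x)\ge \varepsilon$ for every $x\in\partial B_2^{d_1,\ldots,d_l}$ — here I use that $\|x\|_H\le g_{R^\circ}(x)\cdot(\text{something})$... more cleanly: $R^\circ\subseteq \tfrac1\varepsilon B_2^{d_1,\ldots,d_l}$ says every $y\in R^\circ$ has $\|y\|_H\le \tfrac1\varepsilon$, equivalently $g_{R^\circ}(x)=\sup\{\langle x,y\rangle_H : y\in R^\circ\}\ge \varepsilon\|x\|_H^2/\|x\|_H=\varepsilon$ for unit $x$ is not quite it either; the correct equivalence is simply: $\varepsilon B_2^{d_1,\ldots,d_l}\subseteq R \iff g_{R^\circ}(x)\ge \varepsilon\|x\|_H$ for all $x$, because the Euclidean ball is self-polar and $g_{(\varepsilon B)^\circ}=g_{\varepsilon^{-1}B}=\varepsilon^{-1}\|\cdot\|_H$, so $\varepsilon B\subseteq R \iff R^\circ\subseteq \varepsilon^{-1}B \iff g_{R^\circ}\ge \varepsilon\|\cdot\|_H$.

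With this reformulation, the hypothesis $2\varepsilon B_2^{d_1,\ldots,d_l}\subseteq P$ becomes $g_{P^\circ}(x)\ge 2\varepsilon$ for every $x\in\partial B_2^{d_1,\ldots,d_l}$. The hypothesis $\delta^H(P,Q)<\varepsilon$ combined with (\ref{eq:car Hausdorff distance}) gives $\sup_{x\in\partial B_2^{d_1,\ldots,d_l}}|g_{P^\circ}(x)-g_{Q^\circ}(x)|<\varepsilon$, hence $g_{Q^\circ}(x)>g_{P^\circ}(x)-\varepsilon\ge 2\varepsilon-\varepsilon=\varepsilon$ for every $x\in\partial B_2^{d_1,\ldots,d_l}$. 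By homogeneity this yields $g_{Q^\circ}(x)\ge \varepsilon\|x\|_H$ for all $x\in\otimes_{i=1}^l\mathbb{R}^{d_i}$, which by the reformulation above is exactly $\varepsilon B_2^{d_1,\ldots,d_l}\subseteq Q$. Note that the tensorial-body hypothesis on $P$ and $Q$ is not actually needed beyond ensuring they are $0$-symmetric convex bodies so that (\ref{eq:car Hausdorff distance}) applies; I would remark this or simply not dwell on it.

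There is no serious obstacle here: the only point requiring a little care is getting the direction of the polar-inclusion equivalence right and making sure the passage from "$g_{Q^\circ}(x)>\varepsilon$ on the unit sphere" to "$\varepsilon B_2^{d_1,\ldots,d_l}\subseteq Q$" is clean (a strict inequality on the compact sphere $\partial B_2^{d_1,\ldots,d_l}$ gives the non-strict homogeneous bound everywhere, which is all we need). So the proof is essentially three lines: translate the hypothesis on $P$ into a lower bound on $g_{P^\circ}$, apply the triangle-type estimate coming from (\ref{eq:car Hausdorff distance}) to get the same lower bound (with $\varepsilon$ in place of $2\varepsilon$) for $g_{Q^\circ}$, and translate back to the desired inclusion for $Q$.
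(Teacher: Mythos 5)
Your argument is correct: the equivalence $\varepsilon B_{2}^{d_{1},\ldots,d_{l}}\subseteq R\iff g_{R^{\circ}}(x)\ge\varepsilon\|x\|_{H}$ for all $x$ (i.e.\ $R^{\circ}\subseteq\varepsilon^{-1}B_{2}^{d_{1},\ldots,d_{l}}$, by polarity and the bipolar theorem) is right, and combining it with (\ref{eq:car Hausdorff distance}) gives $g_{Q^{\circ}}(x)>g_{P^{\circ}}(x)-\varepsilon\ge\varepsilon$ on the unit sphere, hence the desired inclusion; the self-corrections in your first paragraph end at the correct statement, and your remark that only $0$-symmetry (not tensoriality) is used is also accurate. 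The paper, however, does not prove the lemma at all: it simply invokes \cite[Lemma 3.1]{Antonyan2013} with the center $x_{0}=0$. That cited lemma is stated for arbitrary (not necessarily symmetric) convex bodies containing a translated ball $x_{0}+2\varepsilon B$, and the natural proof in that generality goes through the inclusion form of the Hausdorff distance, $P\subseteq Q+\varepsilon B$, together with cancellation of Minkowski summands, rather than through support functions. So your route is a genuinely self-contained alternative: it buys a three-line proof using the symmetric-body identity (\ref{eq:car Hausdorff distance}) already quoted in the paper, at the (irrelevant here) cost of requiring $0$-symmetry, whereas the paper's citation buys generality and brevity in the text but leaves the argument external. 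Either is acceptable; if you wanted your version to match the cited statement's generality, you would replace the gauge computation by the estimate $x_{0}+2\varepsilon B\subseteq P\subseteq Q+\varepsilon B$ followed by cancellation of $\varepsilon B$.
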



We do not include the proof of Lemma \ref{lem:LEMA natalia} because it follows from \cite[Lemma 3.1]{Antonyan2013}  by making $x_0$ equals to zero. 

\begin{lem}
\label{lem:lema rel compactos}
Let $\varepsilon>0$ and let $P\subset\otimes_{i=1}^{l}\mathbb{R}^{d_{i}}$ be a tensorial body such that  $2\varepsilon B_{2}^{d_{1},\ldots,d_{l}}\subseteq P$. Then the set, $V_{P}\left(\varepsilon\right):=\left\{ Q\in\mathcal{B}_{\otimes}(\otimes_{i=1}^{l}\mathbb{R}^{d_{i}}):\delta^{H}(P,Q)<\varepsilon\right\}$  is a relatively compact set in the space of tensorial bodies.
\end{lem}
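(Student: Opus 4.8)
The plan is to show that $V_P(\varepsilon)$ is contained in a compact subset of the space of tensorial bodies, which by the closedness of $\mathcal{B}_\otimes$ (Proposition \ref{prop:Bsigma is closed}(2)) reduces to exhibiting uniform upper and lower bounds for all $Q\in V_P(\varepsilon)$ and then invoking the Blaschke selection theorem. First I would obtain the \emph{lower} bound: since $2\varepsilon B_2^{d_1,\ldots,d_l}\subseteq P$ and any $Q\in V_P(\varepsilon)$ satisfies $\delta^H(P,Q)<\varepsilon$, Lemma \ref{lem:LEMA natalia} applies directly and gives $\varepsilon B_2^{d_1,\ldots,d_l}\subseteq Q$. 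So every element of $V_P(\varepsilon)$ contains a fixed ball.

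Next I would obtain the \emph{upper} bound. From $\delta^H(P,Q)<\varepsilon$ and the characterization $\delta^H(P,Q)=\min\{\lambda\geq 0: P\subseteq Q+\lambda B_{\mathbb{E}},\ Q\subseteq P+\lambda B_{\mathbb{E}}\}$, we get $Q\subseteq P+\varepsilon B_2^{d_1,\ldots,d_l}$. Since $P$ is a fixed compact set, $P+\varepsilon B_2^{d_1,\ldots,d_l}$ is a fixed bounded set, so all $Q\in V_P(\varepsilon)$ are uniformly bounded; equivalently, writing $R:=\nu(P)+\varepsilon$ (with $\nu$ as in (\ref{eq:Lema cotas para continuidad})), every $Q\in V_P(\varepsilon)$ satisfies $Q\subseteq R\, B_2^{d_1,\ldots,d_l}$.

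Now I would combine these. Let $\overline{V_P(\varepsilon)}$ denote the closure of $V_P(\varepsilon)$ in the space of $0$-symmetric convex bodies. Every member of $\overline{V_P(\varepsilon)}$ is a limit (in $\delta^H$) of bodies sandwiched between $\varepsilon B_2^{d_1,\ldots,d_l}$ and $R\, B_2^{d_1,\ldots,d_l}$; since these inclusions are preserved under Hausdorff limits, every element of the closure is again a $0$-symmetric convex body with $\varepsilon B_2^{d_1,\ldots,d_l}\subseteq Q\subseteq R\, B_2^{d_1,\ldots,d_l}$. The family of all such convex bodies is closed and bounded, hence compact by the Blaschke selection theorem \cite[Theorem 1.8.6]{Schneider1993}; therefore $\overline{V_P(\varepsilon)}$ is compact. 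Moreover, by Proposition \ref{prop:Bsigma is closed}(2), $\mathcal{B}_\otimes$ is closed in the space of $0$-symmetric convex bodies, and $V_P(\varepsilon)\subseteq\mathcal{B}_\otimes$, so $\overline{V_P(\varepsilon)}\subseteq\mathcal{B}_\otimes$; thus $\overline{V_P(\varepsilon)}$ is a compact subset of the space of tensorial bodies containing $V_P(\varepsilon)$, which is exactly the assertion that $V_P(\varepsilon)$ is relatively compact in $\mathcal{B}_\otimes(\otimes_{i=1}^{l}\mathbb{R}^{d_{i}})$.

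I do not anticipate a serious obstacle here; the proof is essentially a packaging of Lemma \ref{lem:LEMA natalia} together with the elementary upper bound and the Blaschke selection theorem. The only point requiring a little care is making sure the lower bound $\varepsilon B_2^{d_1,\ldots,d_l}\subseteq Q$ survives passage to the Hausdorff closure (so that the limiting bodies still have nonempty interior and are genuine convex bodies), but this is immediate since the inclusion of a fixed compact set into $Q$ is a closed condition under $\delta^H$-convergence.
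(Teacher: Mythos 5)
Your proof is correct and uses essentially the same ingredients and strategy as the paper's: the lower bound from Lemma \ref{lem:LEMA natalia}, the trivial uniform upper bound, the Blaschke selection theorem, and the closedness of $\mathcal{B}_{\otimes}$ from Proposition \ref{prop:Bsigma is closed}. The only difference is presentational (you argue via the closure of a sandwiched family, while the paper extracts a convergent subsequence from an arbitrary sequence in $V_{P}(\varepsilon)$), so no further changes are needed.
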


\begin{proof}
We will prove that each sequence in $V_{P}\left(\varepsilon\right)$ has a convergent subsequence in $\mathcal{B}_\otimes.$ Suppose that $2\varepsilon B_{2}^{d_{1},\ldots,d_{l}}\subseteq P$
and  let $Q_{k},$ $k\in\mathbb{N},$ be a sequence contained in $V_{P}\left(\varepsilon\right).$
Clearly $Q_{k},$ $k\in\mathbb{N},$ is a bounded sequence of non-empty compact convex sets.
Thus, from the Blaschke selection theorem (\cite[Theorem 1.8.6]{Schneider1993}) there exists a subsequence $Q_{k_i}$ such that $Q_{k_i}$ converges to a non-empty compact convex set $Q\subset\otimes_{i=1}^l\mathbb{R}^{d_{i}}$. By Lemma \ref{lem:LEMA natalia},
$\varepsilon B_{2}^{d_{1},\ldots,d_{l}}\subseteq Q_{k_{i}}$ for all
$i\in\mathbb{N},$ therefore $\varepsilon B_{2}^{d_{1},\ldots,d_{l}}\subseteq Q$ and $Q$ is a $0$-symmetric convex body. Hence, from Proposition \ref{prop:Bsigma is closed}, it follows that $Q\in\mathcal{B}_{\otimes}.$ This completes the proof.
\end{proof}

\section{A natural action on the space of tensorial bodies $\mathcal{B}_{\otimes}(\otimes_{i=1}^l\mathbb{R}^{d_i})$}
\label{sec:the action of GLtensor}
Recall that $GL_\otimes(\otimes_{i=1}^l\mathbb{R}^{d_i})$ is the set of linear isomorphisms on $\otimes_{i=1}^l\mathbb{R}^d_i$ that preseve decomposable vectors (see Section 2). In a similar way,  $O_\otimes(\otimes_{i=1}^l\mathbb{R}^{d_i})$  consists of the orthogonal maps on $\otimes_{i=1}^l\mathbb{R}^{d_i}$ which enjoy of the same property. That is, $$O_\otimes(\otimes_{i=1}^l\mathbb{R}^{d_i}):=GL_\otimes(\otimes_{i=1}^l\mathbb{R}^{d_i})\cap O(\otimes_{i=1}^l\mathbb{R}^{d_i}).$$ 


In \cite[Proposition 3.11 and Theorem 3.12]{tensorialbodies}, it is proved that $GL_\otimes(\otimes_{i=1}^l\mathbb{R}^{d_i})$ is a closed subgroup of $GL(\otimes_{i=1}^l\mathbb{R}^{d_i})$ which preserves tensorial bodies, see also (\ref{eq: glotimes preserva cuerpos tensoriales}). This allows us to define a natural action of $GL_\otimes(\otimes_{i=1}^l\mathbb{R}^{d_i})$ on the space $\mathcal{B}_{\otimes}(\otimes_{i=1}^l\mathbb{R}^{d_i}).$ It is given by:
\begin{alignat*}{1}
GL_\otimes(\otimes_{i=1}^l\mathbb{R}^{d_i})\times\mathcal{B}_{\otimes}(\otimes_{i=1}^l\mathbb{R}^{d_i}) & \longrightarrow\mathcal{B}_{\otimes}(\otimes_{i=1}^l\mathbb{R}^{d_i})\\
\left(T,Q\right) & \mapsto TQ:=\left\{ Tu:u\in Q\right\} .
\end{alignat*}
Clearly, it is a continuous action. Indeed, it is the restriction of the natural action of the general linear group $GL(\otimes_{i=1}^l\mathbb{R}^{d_i})$ on the space of $0$-symmetric convex bodies in $\otimes_{i=1}^l\mathbb{R}^{d_i},$ which is continuous (\cite[p. 210]{banacamazurcompactum}).  
\medskip

For simplicity of notation, we usually write $GL_\otimes$ and $O_\otimes$ instead of $GL_\otimes(\otimes_{i=1}^l\mathbb{R}^{d_i})$ and $O_\otimes(\otimes_{i=1}^l\mathbb{R}^{d_i}).$
Remember that throughout the paper $l\geq2$ and $d_{i}\geq2$ are integers. 
\begin{thm}
\label{thm:the action is proper}
The action of
$GL_{\otimes}(\otimes_{i=1}^{l}\mathbb{R}^{d_{i}})$ on
$\mathcal{B}_{\otimes}(\otimes_{i=1}^{l}\mathbb{R}^{d_{i}})$
is proper.
\end{thm}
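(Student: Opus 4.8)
The plan is to verify Palais's ``small set'' criterion: I must show that every tensorial body $P \in \mathcal{B}_\otimes$ has a neighborhood $V$ which is \emph{small}, i.e.\ such that for every $Q \in \mathcal{B}_\otimes$ there is a neighborhood $W$ of $Q$ with $[V,W] = \{T \in GL_\otimes : TV \cap W \neq \emptyset\}$ having compact closure in $GL_\otimes$. Since $GL_\otimes$ is a closed subgroup of $GL(\otimes_{i=1}^l\mathbb{R}^{d_i})$, a subset of it is relatively compact iff it is bounded (in operator norm) and bounded away from the set of singular maps; equivalently, the closure in $GL_\otimes$ is compact iff the set is contained in $\{T : \|T\| \le M,\ \|T^{-1}\| \le M\}$ for some $M$. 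So the whole problem reduces to producing, for a given $P$, a neighborhood $V$ such that whenever $TV \cap W \neq \emptyset$ for suitable small $W$ around an arbitrary $Q$, one gets uniform two-sided bounds on $T$.

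First I would normalize: after rescaling $P$ (which does not affect properness, or one simply works with the given $P$ and extracts an $\varepsilon$), choose $\varepsilon > 0$ with $2\varepsilon B_2^{d_1,\ldots,d_l} \subseteq P \subseteq \varepsilon^{-1} B_2^{d_1,\ldots,d_l}$, and set $V = V_P(\varepsilon) = \{Q \in \mathcal{B}_\otimes : \delta^H(P,Q) < \varepsilon\}$, which by Lemma~\ref{lem:lema rel compactos} is relatively compact in $\mathcal{B}_\otimes$, and by Lemma~\ref{lem:LEMA natalia} consists of bodies $Q'$ with $\varepsilon B_2^{d_1,\ldots,d_l} \subseteq Q'$ (and, shrinking $\varepsilon$, also $Q' \subseteq \varepsilon^{-1}B_2^{d_1,\ldots,d_l}$, since $V$ is bounded). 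Now fix an arbitrary $Q \in \mathcal{B}_\otimes$ and pick $\eta > 0$ with $2\eta B_2^{d_1,\ldots,d_l} \subseteq Q \subseteq \eta^{-1}B_2^{d_1,\ldots,d_l}$; let $W = V_Q(\eta)$. Suppose $T \in [V,W]$, so there is $Q' \in V$ with $TQ' \in W$, hence $\delta^H(TQ', Q) < \eta$, which by the same two lemmas gives $\eta B_2^{d_1,\ldots,d_l} \subseteq TQ' \subseteq \eta^{-1}B_2^{d_1,\ldots,d_l}$ (enlarging constants if needed). Combining with $\varepsilon B_2^{d_1,\ldots,d_l} \subseteq Q' \subseteq \varepsilon^{-1}B_2^{d_1,\ldots,d_l}$ yields: $T$ maps a Euclidean ball of radius $\varepsilon$ inside a Euclidean ball of radius $\eta^{-1}$, so $\|T\| \le (\varepsilon\eta)^{-1}$; and $T^{-1}$ maps the ball of radius $\eta$ into the ball of radius $\varepsilon^{-1}$, so $\|T^{-1}\| \le (\varepsilon\eta)^{-1}$. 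These bounds depend only on $P$ (through $\varepsilon$) and on $Q$ (through $\eta$), not on the particular $Q' \in V$ or on $T$, so $[V,W]$ lies in a fixed norm-bounded, inverse-norm-bounded subset of $GL(\otimes_{i=1}^l\mathbb{R}^{d_i})$; intersecting with the closed subgroup $GL_\otimes$ gives that $\overline{[V,W]}$ is compact in $GL_\otimes$.

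The one genuine subtlety — and the step I expect to be the main obstacle — is the passage from ``$TV \cap W \ne \emptyset$'' to uniform \emph{inclusions} of Euclidean balls valid for \emph{all} elements of $V$ simultaneously. This is exactly what the pair of Lemmas~\ref{lem:LEMA natalia} and~\ref{lem:lema rel compactos} is designed to supply: the first turns a Hausdorff-closeness hypothesis into a genuine ball-inclusion, and the second guarantees $V$ is bounded so that an upper Euclidean bound holds across $V$. I would make sure to phrase the argument so that the constants extracted at the start ($2\varepsilon B \subseteq P$, $P \subseteq \varepsilon^{-1}B$) and their analogues for $Q$ are chosen once and for all, and to note explicitly that $Q'$ and $TQ'$ are again tensorial bodies (so the lemmas apply) — this uses only that $GL_\otimes$ preserves $\mathcal{B}_\otimes$, recorded in \eqref{eq: glotimes preserva cuerpos tensoriales}. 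With those inclusions in hand the operator-norm estimates are immediate, and since $V$ does not depend on $Q$, the defining condition of a small set is verified, completing the proof that the action is proper.
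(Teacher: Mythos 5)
Your proof is correct, but it takes a genuinely different (and in one respect more economical) route than the paper's. The paper argues sequentially: from $T_k(\varepsilon B_2^{d_1,\ldots,d_l})\subseteq T_k(Q_k)\subseteq C+\lambda B_2^{d_1,\ldots,d_l}$ it only extracts an upper bound on $\|T_k\|$, then passes to a norm-convergent subsequence $T_{k_i}\to T$ and must still rule out that $T$ is singular; this is done by invoking Lemma \ref{lem:lema rel compactos} (relative compactness of $V_P(\varepsilon)$ in $\mathcal{B}_\otimes$, which rests on Blaschke selection and Proposition \ref{prop:Bsigma is closed}) to get tensorial bodies $Q,D$ with $TQ=D$, whence $T$ is an isomorphism, and finally uses closedness of $GL_\otimes$. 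You instead make the symmetric choice $2\eta B_2^{d_1,\ldots,d_l}\subseteq Q$ around the second point, so Lemma \ref{lem:LEMA natalia} gives the lower inclusion $\eta B_2^{d_1,\ldots,d_l}\subseteq TQ'$ as well as $\varepsilon B_2^{d_1,\ldots,d_l}\subseteq Q'$, and the two ball inclusions yield uniform two-sided bounds $\|T\|\leq M$, $\|T^{-1}\|\leq M$ on the whole transporter; hence $[V,W]$ lies in a compact subset of $GL_\otimes$ with no subsequence extraction at all. Note that in your argument Lemma \ref{lem:lema rel compactos} is actually superfluous: you only use boundedness of $V_P(\varepsilon)$, which is immediate from $Q'\subseteq P+\varepsilon B_2^{d_1,\ldots,d_l}$. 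What each approach buys: yours is more elementary, avoids the hyperspace compactness machinery, and produces explicit constants $(\varepsilon,\eta)$; the paper's version is less delicate about the choice of the second neighborhood (any $\lambda$ with $\lambda B_2^{d_1,\ldots,d_l}\subseteq C$ suffices, no factor $2$ needed there) but pays for it with the Blaschke-type argument to certify invertibility of the limit. One small caveat in your write-up: state the upper bounds honestly as $\|T\|\leq(\eta^{-1}+\eta)/\varepsilon$ and $\|T^{-1}\|\leq(\varepsilon^{-1}+\varepsilon)/\eta$ (coming from $TQ'\subseteq Q+\eta B_2^{d_1,\ldots,d_l}$ and $Q'\subseteq P+\varepsilon B_2^{d_1,\ldots,d_l}$) rather than $(\varepsilon\eta)^{-1}$, and keep the remark that $Q'$ and $TQ'$ are tensorial bodies (via (\ref{eq: glotimes preserva cuerpos tensoriales})) so that Lemma \ref{lem:LEMA natalia} applies; with those points made explicit the argument is complete.
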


\begin{proof}
First notice that since $GL_\otimes$ is a closed subgroup of $GL(\otimes_{i=1}^{l}\mathbb{R}^{d_{i}})$, see \cite[Proposition 3.11]{tensorialbodies}, it is a locally compact Lie group.

Now, let $P\in\mathcal{B}_\otimes$
and $\varepsilon>0$ be such that $2\varepsilon B_{2}^{d_{1},\ldots,d_{l}}\subseteq P.$
We claim that $V_{P}(\varepsilon)$ is a small neighborhood of $P.$ 
To prove this, observe that for each $0$-symmetric convex body $C\subset\otimes_{i=1}^{l}\mathbb{R}^{d_{i}}$, there exists $\lambda>0$ such that $\lambda B_{2}^{d_{1},\ldots,d_{l}}\subseteq C.$
We will show that the transporter 
$
\Gamma=\left\{ T\in GL_{\otimes}:TV_{P}(\varepsilon)\cap V_{C}(\lambda)\neq\emptyset\right\} 
$
has compact closure in $GL_{\otimes}.$ To that end, we will prove that every sequence in $\Gamma$ has a convergent subsequence in $GL_\otimes.$

Let $T_{k}$, $k\in\mathbb{N},$ be a sequence contained in $\Gamma.$ 
Then, for every $k\in\mathbb{N},$ there exists $Q_{k}\in V_{P}(\varepsilon)$
such that $T_{k}(Q_{k})\in V_{C}(\lambda).$ Thus, by Lemma \ref{lem:LEMA natalia}, $\varepsilon B_{2}^{d_{1},\ldots,d_{l}}\subseteq Q_{k}$
for all $k.$ Also, since $\delta^{H}(T_{k}(Q_{k}),C)<\lambda,$
we have that $T_{k}(Q_{k})\subseteq C+\lambda B_{2}^{d_{1},\ldots,d_{l}}.$
Hence, 
\[
T_{k}(\varepsilon B_{2}^{d_{1},\ldots,d_{l}})\subseteq T_{k}(Q_{k})\subseteq C+\lambda B_{2}^{d_{1},\ldots,d_{l}}.
\]
The latter implies that the sequence $\| T_{k}\|$ is bounded
(here $\|T_{k}\| $ is the operator
norm of $T_{k}$ on $\otimes_{H,i=1}^{l}\mathbb{R}^{d_{i}}$). We thus have that 
$T_{k}$ is a bounded sequence of operators between finite dimensional spaces. Therefore $T_{k}$ must have a convergent subsequence.

Let us denote by $T\in\mathcal{L}(\otimes_{H,i=1}^{l}\mathbb{R}^{d_{i}})$
 to the limit of such subsequence $T_{k_{i}}$. We will prove that $T$ belongs to $GL_\otimes.$ Since $GL_\otimes$ is a closed subgroup of $GL(\otimes_{i=1}^l\mathbb{R}^{d_i})$, see \cite[Proposition 3.11]{tensorialbodies},
it is enough to prove that $T$ is a linear isomorphism. To do this,
observe that $Q_{k_{i}}\in V_{P}(\varepsilon)$
and $T_{k_{i}}(Q_{k_{i}})\in V_{C}(\lambda).$
Thus, from Lemma \ref{lem:lema rel compactos}, there exists a sub-subsequence
$Q_{k_{i_{j}}}$ such that $Q_{k_{i_{j}}}$
and $T_{k_{i_j}}\left(Q_{k_{i_j}}\right)$ converge to some tensorial bodies $Q,$ $D$ respectively. 
This yields to $TQ=D$ which proves that $T$ is a linear isomorphism. This completes
the proof.
\end{proof}

Let us recall that  $\mathscr{E}_{\otimes}(\otimes_{i=1}^{l}\mathbb{R}^{d_{i}})$ consists of the ellipsoids in the class of tensorial bodies, see Section 2.1. 
The next result extends, to the class of tensorial bodies,  Corollary 3.10 of \cite{Antonyan2013} which proves that the space of ellipsoids in $\mathbb{R}^d$ is homeomorphic to $\mathbb{R}^p,$ with $p=\frac{d(d+1)}{2}.$ 

\begin{cor}\label{cor:glsigma over osigma and ellipsoids}
$\mathscr{E}_{\otimes}(\otimes_{i=1}^{l}\mathbb{R}^{d_{i}})$ is homeomorphic to $\mathbb{R}^p$ with $p=\frac{d_1(d_1+1)}{2}+\cdots+\frac{d_l(d_l+1)}{2}.$
\end{cor}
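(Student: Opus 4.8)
The plan is to exhibit an explicit bijection between $\mathscr{E}_{\otimes}(\otimes_{i=1}^{l}\mathbb{R}^{d_{i}})$ and the product $\mathscr{P}(d_1)\times\cdots\times\mathscr{P}(d_l)$, where $\mathscr{P}(d_i)$ denotes the set of positive definite symmetric $d_i\times d_i$ matrices, and then to recall that $\mathscr{P}(d_i)$ is homeomorphic to $\mathbb{R}^{d_i(d_i+1)/2}$. Summing dimensions over $i$ gives the stated $p$. The starting point is the characterization (\ref{eq:tensorial ellipsoids char}): every tensorial ellipsoid $\mathcal{E}$ has the form $T_1(B_2^{d_1})\otimes_2\cdots\otimes_2 T_l(B_2^{d_l})$ for some $T_i\in GL(d_i)$. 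Since $T_i(B_2^{d_i})$ depends only on the positive definite matrix $A_i:=(T_iT_i^{t})^{-1}$ (it is the ellipsoid $\{x:\langle A_i x,x\rangle\le 1\}$), and since by (\ref{eq:euclidean ball is tensorial}) and the definition of $\otimes_2$ the Hilbertian product $T_1(B_2^{d_1})\otimes_2\cdots\otimes_2 T_l(B_2^{d_l})$ is the unit ball of $\otimes_{H}(\mathbb{R}^{d_i},g_{T_i(B_2^{d_i})})$, one gets a well-defined map
\[
\Phi:\mathscr{P}(d_1)\times\cdots\times\mathscr{P}(d_l)\longrightarrow\mathscr{E}_{\otimes}(\otimes_{i=1}^{l}\mathbb{R}^{d_{i}}),\qquad (A_1,\ldots,A_l)\mapsto \mathcal{E}_{A_1}\otimes_2\cdots\otimes_2\mathcal{E}_{A_l},
\]
where $\mathcal{E}_{A_i}$ is the ellipsoid determined by $A_i$. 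Surjectivity of $\Phi$ is exactly (\ref{eq:tensorial ellipsoids char}).

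Next I would prove injectivity of $\Phi$, which is where the tensorial structure does real work. Suppose $\mathcal{E}_{A_1}\otimes_2\cdots\otimes_2\mathcal{E}_{A_l}=\mathcal{E}_{A_1'}\otimes_2\cdots\otimes_2\mathcal{E}_{A_l'}$ as subsets of $\otimes_{i=1}^{l}\mathbb{R}^{d_{i}}$; equivalently the two associated Hilbert crossnorms coincide on all of $\otimes_{i=1}^{l}\mathbb{R}^{d_{i}}$, hence in particular on decomposable vectors. By (\ref{eq:rc in gauges}) applied to both reasonable crossnorms, $g_{\mathcal{E}_{A_1}}(x^1)\cdots g_{\mathcal{E}_{A_l}}(x^l)=g_{\mathcal{E}_{A_1'}}(x^1)\cdots g_{\mathcal{E}_{A_l'}}(x^l)$ for all $x^i\in\mathbb{R}^{d_i}$. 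Fixing all but one coordinate at a nonzero vector and varying the remaining one shows each $g_{\mathcal{E}_{A_i}}$ is a positive scalar multiple of $g_{\mathcal{E}_{A_i'}}$, with the product of the scalars equal to $1$. But the normalization in Remark \ref{rem: canonical section} (or, more simply, evaluating at the standard basis vectors and using that $\mathcal{E}_{A_i}$ and $\mathcal{E}_{A_i'}$ are genuine ellipsoids not merely up to scaling) pins the scalars to $1$, so $g_{\mathcal{E}_{A_i}}=g_{\mathcal{E}_{A_i'}}$ and thus $A_i=A_i'$. (To make the normalization bookkeeping clean I would instead parametrize via the canonical bodies $Q^i$ of Remark \ref{rem: canonical section}, which fixes the scaling ambiguity once and for all.)

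For continuity of $\Phi$ I would use Proposition \ref{prop:porducto projectivo e injectivo es continuo con hauss}: the map $A_i\mapsto \mathcal{E}_{A_i}$ is continuous from $\mathscr{P}(d_i)$ (with its Euclidean topology) into $(\mathcal{B}(d_i),\delta^H)$, and $\otimes_2$ can be expressed through $\otimes_\pi$ and polarity, or one argues directly that $\otimes_2$ is continuous in the Hausdorff metric by the same estimates as in Lemma \ref{lem:distancia Hausdorff pi}; composing gives continuity of $\Phi$. For continuity of $\Phi^{-1}$, since all spaces involved are separable metrizable and finite-dimensional it suffices to show $\Phi$ is a closed map, or to note that $\Phi$ is a continuous bijection and invoke the recovery of $A_i$ from $\mathcal{E}$ via the bodies $Q^i$ of Remark \ref{rem: canonical section}: the assignment $Q\mapsto(Q^1,\ldots,Q^l)$ is continuous (this was essentially shown inside the proof of Proposition \ref{prop: tensorial bodies are contractil}), and reading off the defining positive definite matrix of an ellipsoid from the ellipsoid itself is continuous. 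Finally I would cite the standard fact — used already in the paper's reference to Corollary 3.10 of \cite{Antonyan2013} — that $\mathscr{P}(d)\cong\mathbb{R}^{d(d+1)/2}$ via, e.g., the matrix logarithm, so
\[
\mathscr{E}_{\otimes}(\otimes_{i=1}^{l}\mathbb{R}^{d_{i}})\cong\prod_{i=1}^{l}\mathscr{P}(d_i)\cong\prod_{i=1}^{l}\mathbb{R}^{d_i(d_i+1)/2}\cong\mathbb{R}^{p},\qquad p=\sum_{i=1}^{l}\frac{d_i(d_i+1)}{2}.
\]

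The main obstacle I anticipate is the continuity of $\Phi^{-1}$, i.e. showing that the factors $A_i$ (equivalently the bodies $Q^i$) depend continuously on the tensorial ellipsoid $\mathcal{E}$ as a point of $(\mathcal{B}(\otimes_{i=1}^{l}\mathbb{R}^{d_i}),\delta^H)$ — the decomposition in (\ref{eq:tensorial ellipsoids char}) is a priori only set-theoretic, so one must argue that the slicing construction of (\ref{eq: Q1 Ql}) and Remark \ref{rem: canonical section} is Hausdorff-continuous and interacts correctly with the normalization. An alternative route, probably cleaner, is to bypass $\Phi^{-1}$ entirely: observe that the $GL_\otimes$-orbit of $B_2^{d_1,\ldots,d_l}$ is exactly $\mathscr{E}_{\otimes}(\otimes_{i=1}^{l}\mathbb{R}^{d_i})$ (by (\ref{eq:tensorial ellipsoids char})) with stabilizer $O_\otimes$, so by properness of the action (Theorem \ref{thm:the action is proper}) the orbit is closed and $GL_\otimes$-equivariantly homeomorphic to $GL_\otimes/O_\otimes$; then the appendix results (Proposition \ref{prop: charact of GL tensor}, Corollary \ref{cor: structure GL/O tensor}, Lemma \ref{lem: split thm of Gl tensor}) identify $GL_\otimes/O_\otimes$ with $\prod_i GL(d_i)/O(d_i)\cong\prod_i\mathscr{P}(d_i)\cong\mathbb{R}^p$. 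I would present the orbit-space argument as the primary proof and mention $\Phi$ as the motivating picture.
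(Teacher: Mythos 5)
Your designated primary argument is exactly the paper's proof: $O_{\otimes}$ is the stabilizer of $B_2^{d_1,\ldots,d_l}$, the action is proper (Theorem \ref{thm:the action is proper}), so by \cite[Proposition 1.1.5]{Palais2} the orbit is homeomorphic to $GL_{\otimes}/O_{\otimes}$, the orbit equals $\mathscr{E}_{\otimes}(\otimes_{i=1}^{l}\mathbb{R}^{d_{i}})$ by \cite[Corollary 4.3]{tensorialbodies}, and one concludes with Corollary \ref{cor: structure GL/O tensor}. On that route you and the paper coincide, and nothing more is needed.

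The ``motivating picture'', however, contains a genuine error that you should not let stand even as motivation. The map $\Phi$ is \emph{not} injective: the Hilbertian tensor product is invariant under the rescaling $\mathcal{E}_i\mapsto\lambda_i\mathcal{E}_i$ with $\lambda_i>0$ and $\lambda_1\cdots\lambda_l=1$, since replacing $T_i$ by $\lambda_iT_i$ leaves $T_1\otimes\cdots\otimes T_l$ unchanged; concretely $(2B_2^{d_1})\otimes_2(\tfrac12 B_2^{d_2})=B_2^{d_1}\otimes_2B_2^{d_2}$. Your own computation only yields that the gauges agree up to positive scalars with product $1$, and the step that ``pins the scalars to $1$'' has no justification: any positive multiple of an ellipsoid is again an ellipsoid, and Remark \ref{rem: canonical section} normalizes the bodies $Q^i$ extracted from a \emph{given} body $Q$, not the factors $A_i$ you chose beforehand. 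For the same reason the identification $GL_{\otimes}/O_{\otimes}\cong\prod_i GL(d_i)/O(d_i)\cong\prod_i\mathscr{P}(d_i)$ is not what the appendix provides: Lemma \ref{lem: split thm of Gl tensor} identifies $GL_{\otimes}/O_{\otimes}$ with the set $\mathcal{A}$ of decomposable strictly positive operators $S_1\otimes\cdots\otimes S_l$, which is precisely the quotient of $\mathscr{P}(d_1)\times\cdots\times\mathscr{P}(d_l)$ by the $(l-1)$-parameter scaling group above, hence of dimension $\sum_i\tfrac{d_i(d_i+1)}{2}-(l-1)$; Corollary \ref{cor: structure GL/O tensor} itself is obtained from maximal compactness of $O_{\otimes}$ plus a dimension count through Lemma \ref{lem:charact sigma GL}, not from any product decomposition. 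So your two routes give answers differing by $l-1$ in dimension and cannot both be correct; the discrepancy originates in the kernel of $(U_1,\ldots,U_l)\mapsto U_1\otimes\cdots\otimes U_l$ on $O(d_1)\times\cdots\times O(d_l)$ being the finite group $N\cap(O(d_1)\times\cdots\times O(d_l))$ rather than all of $N$, and it must be confronted explicitly (it bears directly on the value of $p$) rather than argued away via the false injectivity claim.
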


\begin{proof}
 Observe that $O_\otimes$ is the estabilizer of $B_2^{d_1,...,d_l}$ thus, by Theorem \ref{thm:the action is proper} and \cite[Proposition 1.1.5]{Palais2}, the $GL_\otimes$-orbit of  $B_2^{d_1,...,d_l}$ is homeomorphic to the quotient $GL_\otimes/O_\otimes$. The result then follows from \cite[Corollary 4.3]{tensorialbodies}, which shows that the $GL_\otimes$-orbit of $B_2^{d_1,...,d_l}$ is $\mathscr{E}_{\otimes}\left(\otimes_{i=1}^{l}\mathbb{R}^{d_{i}}\right),$ and from Corollary \ref{cor: structure GL/O tensor}.
\end{proof}

\subsubsection*{The $GL_\otimes$-equivariant retraction $conv_\otimes$}

Given a tensorial body $Q\subset\otimes_{i=1}^{l}\mathbb{R}^{d_{i}},$ we define $conv_\otimes$ as the map sending $Q$ to the projective tensor product of its associated $0$-symmetric bodies $Q^{i},$ $i=1,\ldots,l,$ see Remark \ref{rem: canonical section}. That is,  
\begin{align*}
conv_{\otimes}:\mathcal{B}_{\otimes}\left(\otimes_{i=1}^{l}\mathbb{R}^{d_{i}}\right) & \longrightarrow\mathcal{B}_{\otimes}\left(\otimes_{i=1}^{l}\mathbb{R}^{d_{i}}\right)\\
Q & \mapsto Q^{1}\otimes_{\pi}\cdots\otimes_{\pi}Q^{l}.
\end{align*}
The set of fixed points of $conv_\otimes$ is  $\Pi:=\{Q\in\mathcal{B}_{\otimes}(\otimes_{i=1}^{l}\mathbb{R}^{d_{i}}):conv_\otimes(Q)=Q\}$. Below, we prove that $conv_\otimes$ is a $GL_\otimes$-equivariant retraction from the space of tensorial bodies onto $\Pi,$ see (3) in Proposition \ref{prop:conv function}. 
\begin{lem}
 $\Pi$ is the image of $\mathcal{B}(d_1)\times\cdots\times\mathcal{B}(d_l)$ under the projective tensor product $\otimes_\pi.$
\end{lem}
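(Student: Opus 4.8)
The claim is a set equality $\Pi = \otimes_\pi\bigl(\mathcal{B}(d_1)\times\cdots\times\mathcal{B}(d_l)\bigr)$, so the plan is to prove the two inclusions separately. The inclusion $\Pi \subseteq \otimes_\pi(\mathcal{B}(d_1)\times\cdots\times\mathcal{B}(d_l))$ is essentially immediate: if $Q\in\Pi$, then by the definition of $conv_\otimes$ we have $Q = conv_\otimes(Q) = Q^1\otimes_\pi\cdots\otimes_\pi Q^l$, where the $Q^i\subset\mathbb{R}^{d_i}$ are the $0$-symmetric convex bodies attached to $Q$ in Remark \ref{rem: canonical section}; hence $Q$ lies in the image of $\otimes_\pi$.

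For the reverse inclusion, I would start with an arbitrary tuple $(P_1,\ldots,P_l)\in\mathcal{B}(d_1)\times\cdots\times\mathcal{B}(d_l)$ and set $P := P_1\otimes_\pi\cdots\otimes_\pi P_l$. First note that $P$ is indeed a tensorial body: it satisfies the defining inclusions (\ref{eq: inclusion convex bodies rc norms}) with respect to $P_1,\ldots,P_l$, since $P_1\otimes_\pi\cdots\otimes_\pi P_l \subseteq P_1\otimes_\pi\cdots\otimes_\pi P_l$ trivially and $P_1\otimes_\pi\cdots\otimes_\pi P_l \subseteq P_1\otimes_\epsilon\cdots\otimes_\epsilon P_l$ always holds (the projective product is the smallest, the injective the largest). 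So $P\in\mathcal{B}_\otimes$. It then remains to show $conv_\otimes(P) = P$, i.e. that the canonically associated bodies $P^i$ of $P$ (from Remark \ref{rem: canonical section}) satisfy $P^1\otimes_\pi\cdots\otimes_\pi P^l = P$. The key computational step is to identify the $P^i$: by definition $P^i = P_i^{e_1^{d_1},\ldots,\lambda e_l^{d_l}}$ with $\lambda = 1/g_P(e_1^{d_1}\otimes\cdots\otimes e_1^{d_l})$, which by (\ref{eq:rc in gauges}) equals $1/\bigl(g_{P_1}(e_1^{d_1})\cdots g_{P_l}(e_1^{d_l})\bigr)$. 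Using (\ref{eq: Q1 Ql}) together with the multiplicativity (\ref{eq:rc in gauges}) of $g_P$ on decomposable vectors, one computes that $g_{P^i}(x^i) = g_{P_i}(x^i)/g_{P_i}(e_1^{d_i})$, i.e. $P^i$ is just a positive rescaling $c_i P_i$ of $P_i$ with $\prod_i c_i = 1$; since $\otimes_\pi$ is multilinear with respect to such dilations, $P^1\otimes_\pi\cdots\otimes_\pi P^l = \bigl(\prod_i c_i\bigr) P_1\otimes_\pi\cdots\otimes_\pi P_l = P$. This gives $P\in\Pi$.

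The main obstacle — really the only place requiring care — is this last bookkeeping of the normalizing constants: one must check that the rescaling built into Remark \ref{rem: canonical section} exactly cancels, so that $conv_\otimes$ fixes every projective product of convex bodies despite the fact that the decomposition $P = P_1\otimes_\pi\cdots\otimes_\pi P_l$ is not unique (one can rescale the factors $P_i$ by reciprocal constants). The role of the normalization $\lambda$ is precisely to select a canonical representative of this family of decompositions, and the computation via (\ref{eq:rc in gauges}) makes this transparent. Everything else is a direct application of the definitions and of the identities already recorded in Theorem \ref{thm:equivalence of tensor body}, Remark \ref{rem: canonical section}, and (\ref{eq:rc in gauges}).
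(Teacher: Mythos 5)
Your overall route is sound but genuinely different from the paper's, and it contains one concrete slip at exactly the point you flag as delicate. The paper, after noting (as you do) that $P=P_1\otimes_\pi\cdots\otimes_\pi P_l$ is a tensorial body with respect to $P_1,\ldots,P_l$ and also with respect to its canonical bodies $P^1,\ldots,P^l$, never computes the $P^i$: it invokes Proposition 3.6 of \cite{tensorialbodies}, which says that a body tensorial with respect to two tuples forces the tuples to coincide up to scalars $\lambda_i>0$ with $\lambda_1\cdots\lambda_l=1$, whence $P^1\otimes_\pi\cdots\otimes_\pi P^l=P_1\otimes_\pi\cdots\otimes_\pi P_l=P$. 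You replace that citation by a direct computation of the $P^i$ from Remark \ref{rem: canonical section} via (\ref{eq: Q1 Ql}) and (\ref{eq:rc in gauges}); this is more self-contained and makes the role of the normalization $\lambda$ explicit, at the price of some bookkeeping the paper outsources to the uniqueness result.

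The slip is in that bookkeeping: the formula $g_{P^i}(x^i)=g_{P_i}(x^i)/g_{P_i}(e_1^{d_i})$ is correct only for $i=1,\ldots,l-1$. Taken literally for all $i$ it gives $c_i=g_{P_i}(e_1^{d_i})$ and hence $\prod_i c_i=\prod_i g_{P_i}(e_1^{d_i})$, which need not equal $1$, contradicting the cancellation your last step relies on. The point is that the factor $\lambda=\bigl(g_{P_1}(e_1^{d_1})\cdots g_{P_l}(e_1^{d_l})\bigr)^{-1}$ of Remark \ref{rem: canonical section} sits only in the $l$-th slot of the decomposable vector, so when the $l$-th entry is replaced by $x^l$ the factor $\lambda$ disappears: one gets $P^i=g_{P_i}(e_1^{d_i})\,P_i$ for $i<l$ but $P^l=\bigl(\prod_{j<l}g_{P_j}(e_1^{d_j})\bigr)^{-1}P_l$. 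With these constants $\prod_i c_i=1$ does hold and your argument goes through verbatim. So the approach is correct and arguably more transparent than the paper's appeal to the uniqueness proposition, once the computation of $P^l$ is corrected so that the normalizing constants actually cancel.
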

\begin{proof}
Let us suppose that $Q\in\Pi,$ then $Q=Q^1\otimes_\pi\cdots\otimes_\pi Q^l$ and it is the image of  $(Q^1,\ldots,Q^l)$ under $\otimes_\pi.$ On the other hand, if $Q$ is such that $Q=Q_1\otimes_\pi\cdots\otimes_\pi Q_l$ for some $0$-symmetric convex bodies $Q_i\subset\mathbb{R}^{d_i},$ then $Q\in\mathcal{B}_{Q_1,\ldots,Q_l}\cap\mathcal{B}_{Q^1,\ldots,Q^l}.$ Therefore, by \cite[Proposition 3.6]{tensorialbodies}, there exist $\lambda_i>0,$ $i=1,\ldots,l,$ such that $\lambda_1\cdots\lambda_l=1$ and $Q^i=\lambda_iQ_i,$ for all $i.$ From this, 
$Q^1\otimes_\pi\cdots\otimes_\pi Q^l=Q_1\otimes_\pi\cdots\otimes_\pi Q_l$ 
and $Q\in\Pi$, so the proof is completed. 
\end{proof}

The next proposition collects the basic properties of $conv_\otimes$. It is based on an alternative definition of this map stated in terms of the set of decomposable vectors. We follow the notation of \cite{maite}. There, this set is denoted by $$\Sigma_{\mathbb{R}^{d_1},\ldots,\mathbb{R}^{d_l}}:=\{x^1\otimes\cdots\otimes x^l:x^i\in\mathbb{R}^{d_i},i=1,\ldots,l\}.$$

\begin{prop}
\label{prop:conv function}
The following statements hold:
\begin{enumerate}
\item For every tensorial body $Q\subset\otimes_{i=1}^{l}\mathbb{R}^{d_{i}},$ $conv_{\otimes}(Q)=conv(Q\cap\Sigma_{\mathbb{R}^{d_1},\ldots,\mathbb{R}^{d_l}}).$ 

\item The map $conv_{\otimes}$ is constant on every $\mathcal{B}_{Q_1,\ldots,Q_l}(\otimes_{i=1}^{l}\mathbb{R}^{d_{i}}).$


\item The map $conv_{\otimes}$ is a $GL_\otimes$-equivariant retraction from $\mathcal{B}_{\otimes}(\otimes_{i=1}^{l}\mathbb{R}^{d_{i}})$ onto $\Pi$.
\end{enumerate}
\end{prop}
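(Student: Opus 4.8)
The three claims are naturally proved in order, with claim (1) providing the conceptual backbone for the other two. For claim (1), the plan is to show that $Q\cap\Sigma_{\mathbb{R}^{d_1},\ldots,\mathbb{R}^{d_l}}$ is precisely the set of decomposable vectors $x^1\otimes\cdots\otimes x^l$ with $g_Q(x^1\otimes\cdots\otimes x^l)\le 1$, and to use the multiplicativity identity $g_Q(x^1\otimes\cdots\otimes x^l)=g_{Q^1}(x^1)\cdots g_{Q^l}(x^l)$ from (\ref{eq:rc in gauges}) (valid since $Q\in\mathcal{B}_{Q^1,\ldots,Q^l}$). A decomposable vector lies in $Q$ iff $g_{Q^1}(x^1)\cdots g_{Q^l}(x^l)\le 1$; by rescaling each factor one sees that $\mathrm{conv}(Q\cap\Sigma)=\mathrm{conv}\{x^1\otimes\cdots\otimes x^l : x^i\in Q^i\}=Q^1\otimes_\pi\cdots\otimes_\pi Q^l$, which is exactly $conv_\otimes(Q)$. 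The one point requiring a small argument is that convex combinations of decomposable vectors from the slightly larger set $\{g_{Q^1}(x^1)\cdots g_{Q^l}(x^l)\le 1\}$ do not escape $Q^1\otimes_\pi\cdots\otimes_\pi Q^l$; this follows because any such vector can itself be written as $y^1\otimes\cdots\otimes y^l$ with $y^i\in Q^i$ after absorbing the scalars.

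\textbf{Claims (2) and (3).} Claim (2) is almost immediate from the definition: if $Q\in\mathcal{B}_{Q_1,\ldots,Q_l}$, then by \cite[Proposition 3.6]{tensorialbodies} (as used in the preceding lemma) the canonical bodies $Q^i$ differ from $Q_i$ only by positive scalars with product $1$, so $Q^1\otimes_\pi\cdots\otimes_\pi Q^l=Q_1\otimes_\pi\cdots\otimes_\pi Q_l$ depends only on the class $\mathcal{B}_{Q_1,\ldots,Q_l}$, not on the particular $Q$. For claim (3), I must check three things: that $conv_\otimes$ maps into $\Pi$, that it restricts to the identity on $\Pi$, and that it is $GL_\otimes$-equivariant and continuous. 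The first holds because $Q^1\otimes_\pi\cdots\otimes_\pi Q^l$ lies in $\mathcal{B}_{Q^1,\ldots,Q^l}$ and its own canonical bodies are (scalar multiples of) the $Q^i$, so applying $conv_\otimes$ again returns the same body — here the previous lemma identifying $\Pi$ with the image of $\otimes_\pi$ is convenient. That $conv_\otimes|_\Pi=\mathrm{id}$ is the defining property of $\Pi$. Continuity follows by combining Lemma \ref{lem:convergencia hausdorff implica convegencia de normas}, \cite[Proposition 3.7]{tensorialbodies} and Proposition \ref{prop:porducto projectivo e injectivo es continuo con hauss}: if $Q_n\to Q$ in the Hausdorff distance then $g_{Q_n^i}\to g_{Q^i}$ uniformly on compacts, hence $Q_n^i\to Q^i$, hence $Q_n^1\otimes_\pi\cdots\otimes_\pi Q_n^l\to Q^1\otimes_\pi\cdots\otimes_\pi Q^l$ by continuity of $\otimes_\pi$ — this is exactly the argument already rehearsed inside the proof of Proposition \ref{prop: tensorial bodies are contractil}.

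\textbf{Equivariance and the main obstacle.} The equivariance $conv_\otimes(TQ)=T\,conv_\otimes(Q)$ for $T\in GL_\otimes$ is where the structure theory is needed. Using the description from characterization (1): $\Sigma_{\mathbb{R}^{d_1},\ldots,\mathbb{R}^{d_l}}$ is $GL_\otimes$-invariant by the very definition of $GL_\otimes$ (it preserves decomposable vectors), so $TQ\cap\Sigma=T(Q\cap\Sigma)$, and since $T$ is linear it commutes with taking convex hulls, giving $conv_\otimes(TQ)=\mathrm{conv}(T(Q\cap\Sigma))=T\,\mathrm{conv}(Q\cap\Sigma)=T\,conv_\otimes(Q)$. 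Alternatively, one can argue directly with (\ref{eq:gltensor image of proj and inj tp}): by (\ref{eq: glotimes preserva cuerpos tensoriales}), if $Q\in\mathcal{B}_{Q^1,\ldots,Q^l}$ then $TQ\in\mathcal{B}_{T_1Q^{\sigma(1)},\ldots,T_lQ^{\sigma(l)}}$, and $T(Q^1\otimes_\pi\cdots\otimes_\pi Q^l)=T_1Q^{\sigma(1)}\otimes_\pi\cdots\otimes_\pi T_lQ^{\sigma(l)}$. I expect the main obstacle to be purely bookkeeping: verifying that the canonical bodies of $TQ$ (defined via the fixed reference vector $e_1^{d_1}\otimes\cdots\otimes e_1^{d_l}$) agree, up to the product-one scalars that $\otimes_\pi$ washes out, with $T_iQ^{\sigma(i)}$ — since $T$ need not fix $e_1^{d_1}\otimes\cdots\otimes e_1^{d_l}$, the two natural normalizations differ by scalars, and one must invoke the scalar-invariance of $\otimes_\pi$ (the identity $tQ_1\otimes_\pi\cdots\otimes_\pi Q_l = Q_1\otimes_\pi\cdots\otimes_\pi tQ_l$) together with \cite[Proposition 3.6]{tensorialbodies} to reconcile them. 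Routing everything through characterization (1) makes this obstacle essentially disappear, which is why I would prove (1) first and deduce (2) and (3) from it.
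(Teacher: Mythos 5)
Your proposal is correct and follows essentially the same route as the paper: part (1) by the gauge-multiplicativity identity (\ref{eq:rc in gauges}) together with the factor-rescaling trick to absorb scalars, part (2) via \cite[Proposition 3.6]{tensorialbodies} and the scalar-invariance of $\otimes_\pi$, and part (3) with equivariance through the $GL_\otimes$-invariance of $\Sigma_{\mathbb{R}^{d_1},\ldots,\mathbb{R}^{d_l}}$ and preservation of convex hulls, plus continuity via Lemma \ref{lem:convergencia hausdorff implica convegencia de normas}, \cite[Proposition 3.7]{tensorialbodies} and Proposition \ref{prop:porducto projectivo e injectivo es continuo con hauss}. The only cosmetic difference is that you verify the retraction property as ``image in $\Pi$ plus identity on $\Pi$'' using the preceding lemma, whereas the paper checks idempotency directly from (\ref{eq:conv is constant on Bq1q2}); these are equivalent.
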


\begin{proof}
To prove (1) observe that from Theorem \ref{thm:equivalence of tensor body}, each tensorial body $Q$ is a tensorial body with respect to $Q^1,\ldots,Q^l$, so by (\ref{eq:rc in gauges}), we have $Q^{1}\otimes_{\pi}\cdots\otimes_{\pi}Q^{l}\subseteq conv(Q\cap\Sigma_{\mathbb{R}^{d_1},\ldots,\mathbb{R}^{d_l}})$. 
To show the other inclusion, notice that every $w=x^1\otimes\cdots\otimes x^l\in Q$ can be written as $w=y^1\otimes\cdots\otimes y^l$ where $y^{i}=g_{Q^i}(\frac{x^i}{g_{Q^i}(x^i)})x^i,$ $i=1,\ldots,l-1,$ and $y^l=g_{Q^1}(x^1)\cdots g_{Q^{l-1}}(x^{l-1})x^l.$ Clearly each $y^i\in Q^i$ for $i=1,\ldots,l-1$ and, from (\ref{eq:rc in gauges}), $y^l\in Q^l.$ Hence, $w\in Q^{1}\otimes_{\pi}\cdots\otimes_{\pi}Q^{l}.$ The desired inclusion then follows by convexity.
\medskip

(2) We will prove that for every $Q\in\mathcal{B}_{Q_1,\ldots,Q_l},$
\begin{equation}
\label{eq:conv is constant on Bq1q2}
conv_\otimes(Q)=Q_{1}\otimes_{\pi}\cdots\otimes_{\pi}Q_{l}.
\end{equation}
Let $Q\in\mathcal{B}_{Q_1,\ldots,Q_l},$ then $Q\in\mathcal{B}_{Q_1,\ldots,Q_l}\cap\mathcal{B}_{Q^1,\ldots,Q^l}$ and, from \cite[Proposition 3.6]{tensorialbodies},  there exist $\lambda_i>0,$ $i=1,\ldots,l,$ such that $\lambda_1\cdots\lambda_l=1$ and $Q^i=\lambda_iQ_i,$ for all $i.$ Hence, $Q^1\otimes_\pi\cdots\otimes_\pi Q^l=Q_1\otimes_\pi\cdots\otimes_\pi Q_l$ and $conv_{\otimes}(Q)=Q_1\otimes_\pi\cdots\otimes_\pi Q_l,$ as desired. 

(3) 
To prove the equivariance of $conv_\otimes,$ it is enough to recall that linear maps preserve convex hulls. Hence, $T(conv(Q\cap\Sigma_{\mathbb{R}^{d_1},\ldots,\mathbb{R}^{d_l}}))=conv(T(Q\cap\Sigma_{\mathbb{R}^{d_1},\ldots,\mathbb{R}^{d_l}}))=conv((TQ)\cap\Sigma_{\mathbb{R}^{d_1},\ldots,\mathbb{R}^{d_l}}),$ for every $T\in GL_\otimes.$ The latter along with (1) shows that $T(conv_\otimes(Q))=conv_\otimes(TQ).$

 To prove that $conv_\otimes$ is continuous, let $Q_{k},$ $k\in\mathbb{N},$ be a sequence in
$\mathcal{B}_{\otimes}$
converging to a tensorial body $Q.$ We will see that $conv_\otimes(Q_k)$ converges to $conv_\otimes(Q).$ By (2) in Lemma \ref{lem:convergencia hausdorff implica convegencia de normas}, $g_{Q_k}$ converges uniformly (on compact sets) to $g_Q.$ Thus, from \cite[Proposition 3.7]{tensorialbodies}, $g_{(Q_{k}^i)^\circ}$ converges uniformly (on compact sets) to $g_{(Q^i)^{\circ}}.$ So, by (\ref{eq:car Hausdorff distance}), $Q_{k}^{i}$ goes to $Q^i$ for each $i=1,\ldots,l.$ Hence, by the continuity of $\otimes_\pi$ (Proposition \ref{prop:porducto projectivo e injectivo es continuo con hauss}), $Q_{k}^1\otimes_\pi\cdots\otimes_\pi Q_{k}^l$ converges to $Q^{1}\otimes_\pi\cdots\otimes_\pi Q^{l}$ and   $conv_\otimes$ is continuous.

To show that $conv_\otimes$ is a retraction onto $\Pi,$ notice that each tensorial body  $Q$ is such that $Q, Q^{1}\otimes_{\pi}\cdots\otimes_{\pi}Q^{l}\in\mathcal{B}_{Q^1,\ldots,Q^l},$ then by (\ref{eq:conv is constant on Bq1q2}), $conv_\otimes(conv_\otimes(Q))=conv_\otimes(Q^{1}\otimes_{\pi}\cdots\otimes_{\pi}Q^{l})=Q^{1}\otimes_{\pi}\cdots\otimes_{\pi}Q^{l}=conv_\otimes(Q).$
\end{proof}

\subsection{A global slice for the space of tensorial bodies $\mathcal{B}_{\otimes}(\otimes_{i=1}^{l}\mathbb{R}^{d_i})$}
\label{sec:global slice}

Given a $0$-symmetric convex body $Q\subset\mathbb{R}^d$, the L\"{o}wner ellipsoid $L\ddot{o}w(Q)$ and John ellipsoid $John(Q)$ are defined as the ellipsoid of minimal  volume (resp. maximal volume) containing $Q$ (resp. contained in $Q$).  Both ellipsoids are fundamental tools in the study of convex bodies and finite dimensional normed spaces (see for instance \cite[Chapter 3]{Tomczak-Jaegermann1989}). 
One of their main features is the uniqueness, which was proved by F. John \cite{JohnF}. This property enables the definition of the L\"{o}wner map, $L\ddot{o}w,$ as the map sending each $0$-symmetric convex body $Q\subset\mathbb{R}^d$ to its L\"{o}wner ellipsoid  $L\ddot{o}w(Q),$ see 
\cite[Section 3.2]{Antonyan2013}. Among its fundamental properties, in \cite[Theorem 3.6]{Antonyan2013}, it is proved that the L\"{o}wner map is a $GL(d)$-equivariant retraction onto the set of ellipsoids in $\mathbb{R}^d$. By means of this  map, we will exhibit a compact $O_\otimes$-global slice for the space of tensorial bodies (Theorem \ref{thm:compact global slice}).
\medskip

The relation between L\"{o}wner and John ellipsoids and the tensor products $\otimes_\pi,$ $\otimes_\epsilon$ is established in  \cite[Lemma 1]{Aubrun2006} and \cite[Proposition 3.14]{MaiteLuisa2}. There, it is proved that L\"{o}wner and John ellipsoids are preserved under the projective $\otimes_\pi$ and the injective tensor product $\otimes_\epsilon,$ respectively. 
In \cite[Lemma 1]{Aubrun2006}, it is showed  that for every tuple of $0$-symmetric convex bodies $Q_i\subset\mathbb{R}^{d_i},$ $i=1,\ldots,l,$ the L\"{o}wner ellipsoid of the projective tensor product is the Hilbertian tensor product of the  L\"{o}wner ellipsoids $L\ddot{o}w(Q_i).$ That is,
 \begin{equation}
 \label{eq:low ellipsoid and pprojective}
 L\ddot{o}w(Q_1\otimes_\pi\cdots\otimes_\pi Q_l)=L\ddot{o}w(Q_1)\otimes_2\cdots\otimes_2 L\ddot{o}w(Q_l).
 \end{equation}



As a consequence of (\ref{eq:low ellipsoid and pprojective}), L\"{o}wner ellipsoids of projective tensor products of $0$-symmetric convex bodies are tensorial ellipsoids. This allows us to define the map $l_{\otimes}$ as the composition of two retractions $l_{\otimes}:=L\ddot{o}w\circ conv_{\otimes},$ 
\begin{align*}
l_{\otimes}:\mathcal{B}_{\otimes}(\otimes_{i=1}^{l}\mathbb{R}^{d_{i}}) & \rightarrow\mathscr{E}_{\otimes}(\otimes_{i=1}^{l}\mathbb{R}^{d_{i}})\\
Q & \mapsto L\ddot{o}w(Q^{1}\otimes_{\pi}\cdots\otimes_{\pi}Q^{l}).
\end{align*}

Below we prove that $l_\otimes$ is a $GL_\otimes$-equivariant retraction onto the space of tensorial ellipsoids (Proposition \ref{prop:lsigma es RETRACCION}). This map will be used to show that the subset $\mathscr{L}_{\otimes}(\otimes_{i=1}^{l}\mathbb{R}^{d_{i}})$ which consists of the tensorial bodies $Q\subset\otimes_{i=1}^{l}\mathbb{R}^{d_{i}}$ such that $l_{\otimes}(Q)=B_{2}^{d_{1},\ldots,d_{l}}$ is a compact global $O_\otimes$-slice for the space of tensorial bodies $\mathcal{B}_\otimes(\otimes_{i=1}^{l}\mathbb{R}^{d_{i}}),$ see Theorem \ref{thm:compact global slice}.
To simplify notation, we will write $\mathscr{E}_\otimes$ and $\mathscr{L}_\otimes$ instead of $\mathscr{E}_{\otimes}(\otimes_{i=1}^{l}\mathbb{R}^{d_{i}})$ and $\mathscr{L}_{\otimes}(\otimes_{i=1}^{l}\mathbb{R}^{d_{i}}).$  

\begin{prop}
\label{prop:lsigma es RETRACCION} The map $l_{\otimes}$ is a $GL_{\otimes}$-equivariant retraction from
$\mathcal{B}_{\otimes}\left(\otimes_{i=1}^{l}\mathbb{R}^{d_{i}}\right)$
onto $\mathscr{E}_{\otimes}\left(\otimes_{i=1}^{l}\mathbb{R}^{d_{i}}\right).$ 
\end{prop}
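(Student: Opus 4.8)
The plan is to verify the three defining properties of a $GL_\otimes$-equivariant retraction for $l_\otimes = L\ddot{o}w \circ conv_\otimes$: that it lands in $\mathscr{E}_\otimes$, that it restricts to the identity on $\mathscr{E}_\otimes$, and that it is continuous and equivariant. Since $conv_\otimes$ has already been shown (Proposition \ref{prop:conv function}) to be a continuous $GL_\otimes$-equivariant retraction onto $\Pi$, and the L\"owner map $L\ddot{o}w$ is known to be a continuous $GL(d)$-equivariant retraction onto the ellipsoids in $\otimes_{i=1}^l\mathbb{R}^{d_i}\simeq\mathbb{R}^d$ (by \cite[Theorem 3.6]{Antonyan2013}), most of the work is already packaged; the point is to check that the composition respects the smaller group $GL_\otimes$ and the smaller target $\mathscr{E}_\otimes$.

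First I would establish that $l_\otimes$ takes values in $\mathscr{E}_\otimes$. Given a tensorial body $Q$, its image under $conv_\otimes$ is $Q^1\otimes_\pi\cdots\otimes_\pi Q^l$, and by (\ref{eq:low ellipsoid and pprojective}) we have $L\ddot{o}w(Q^1\otimes_\pi\cdots\otimes_\pi Q^l) = L\ddot{o}w(Q^1)\otimes_2\cdots\otimes_2 L\ddot{o}w(Q^l)$, which is a Hilbertian tensor product of ellipsoids, hence by (\ref{eq:tensorial ellipsoids char}) (equivalently \cite[Corollary 4.3]{tensorialbodies}) a tensorial ellipsoid. So $l_\otimes(Q)\in\mathscr{E}_\otimes$. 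Next, for the retraction property, let $\mathcal{E}\in\mathscr{E}_\otimes$; by (\ref{eq:tensorial ellipsoids char}) write $\mathcal{E}=T_1(B_2^{d_1})\otimes_2\cdots\otimes_2 T_l(B_2^{d_l})$, so $\mathcal{E}$ is already a projective-type object up to the identification $\otimes_2$ being the L\"owner ellipsoid of $\otimes_\pi$ of balls — more carefully, since $\mathcal{E}$ is a tensorial body it belongs to some $\mathcal{B}_{Q_1,\ldots,Q_l}$ and by (\ref{eq:conv is constant on Bq1q2}), $conv_\otimes(\mathcal{E}) = Q_1\otimes_\pi\cdots\otimes_\pi Q_l$; but $\mathcal{E}$ being an ellipsoid forces (via the characterization of tensorial ellipsoids) the $Q_i$ to be ellipsoids and $\mathcal{E}$ to equal their Hilbertian product, which is its own L\"owner ellipsoid. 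Hence $l_\otimes(\mathcal{E}) = L\ddot{o}w(conv_\otimes(\mathcal{E})) = \mathcal{E}$. I would want to phrase this last point so that it uses only the already-quoted facts: $\mathcal{E} = \mathcal{E}^1\otimes_\pi\cdots\otimes_\pi\mathcal{E}^l$ with each $\mathcal{E}^i$ an ellipsoid (this is where \cite[Corollary 4.3]{tensorialbodies} and the discussion around (\ref{eq:tensorial ellipsoids char}) enter), and then $L\ddot{o}w$ of an ellipsoid is itself.

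Finally, continuity of $l_\otimes$ follows immediately as the composition of the continuous maps $conv_\otimes$ (Proposition \ref{prop:conv function}) and $L\ddot{o}w$ (\cite[Theorem 3.6]{Antonyan2013}). For equivariance, $conv_\otimes$ is $GL_\otimes$-equivariant by Proposition \ref{prop:conv function}(3), and $L\ddot{o}w$ is $GL(d)$-equivariant, hence in particular equivariant under the subgroup $GL_\otimes\subseteq GL(\otimes_{i=1}^l\mathbb{R}^{d_i})$; composing, $l_\otimes(TQ) = L\ddot{o}w(conv_\otimes(TQ)) = L\ddot{o}w(T\,conv_\otimes(Q)) = T\,L\ddot{o}w(conv_\otimes(Q)) = T\,l_\otimes(Q)$ for all $T\in GL_\otimes$. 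The main (minor) obstacle I anticipate is the retraction step: one must be careful that every tensorial ellipsoid really is fixed by $conv_\otimes$ followed by $L\ddot{o}w$, i.e. that for an ellipsoid $\mathcal{E}$ the associated bodies $\mathcal{E}^i$ from Remark \ref{rem: canonical section} are genuinely ellipsoids and that $\mathcal{E}^1\otimes_\pi\cdots\otimes_\pi\mathcal{E}^l$ recovers $\mathcal{E}$ on the nose; this is exactly the content of the structure theorem for tensorial ellipsoids (\ref{eq:tensorial ellipsoids char}) together with \cite[Proposition 3.6]{tensorialbodies}, so it is a bookkeeping matter rather than a genuine difficulty.
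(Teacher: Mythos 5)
Your argument is correct and its continuity/equivariance part is exactly the paper's: both are obtained by composing $conv_{\otimes}$ (Proposition \ref{prop:conv function}) with the L\"{o}wner map, which is continuous and $GL(\otimes_{i=1}^{l}\mathbb{R}^{d_{i}})$-equivariant by \cite[Theorem 3.6]{Antonyan2013}, hence equivariant under the subgroup $GL_{\otimes}$. Where you differ is the retraction step. The paper writes $\mathcal{E}=T_{1}\otimes\cdots\otimes T_{l}(B_{2}^{d_{1},\ldots,d_{l}})$ by \cite[Corollary 4.3]{tensorialbodies}, pulls $T_{1}\otimes\cdots\otimes T_{l}$ out using the equivariance just established, and is left with the single base-point computation $l_{\otimes}(B_{2}^{d_{1},\ldots,d_{l}})=B_{2}^{d_{1},\ldots,d_{l}}$, which follows from (\ref{eq:euclidean ball is tensorial}), (\ref{eq:conv is constant on Bq1q2}) and (\ref{eq:low ellipsoid and pprojective}). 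You instead fix an arbitrary tensorial ellipsoid and argue pointwise: $\mathcal{E}\in\mathcal{B}_{\mathcal{E}_{1},\ldots,\mathcal{E}_{l}}$ with ellipsoid factors (because $\mathcal{E}_{1}\otimes_{2}\cdots\otimes_{2}\mathcal{E}_{l}$ is the unit ball of the Hilbert tensor norm, a reasonable crossnorm), so $conv_{\otimes}(\mathcal{E})=\mathcal{E}_{1}\otimes_{\pi}\cdots\otimes_{\pi}\mathcal{E}_{l}$ by (\ref{eq:conv is constant on Bq1q2}), and then (\ref{eq:low ellipsoid and pprojective}) yields $l_{\otimes}(\mathcal{E})=\mathcal{E}_{1}\otimes_{2}\cdots\otimes_{2}\mathcal{E}_{l}=\mathcal{E}$. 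Both routes use the same three ingredients; the paper's reduction via equivariance is slightly more economical (one computation at the base point, no appeal to \cite[Proposition 3.6]{tensorialbodies}), while yours makes explicit why every tensorial ellipsoid is fixed.

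One slip should be corrected: twice you assert $\mathcal{E}=\mathcal{E}^{1}\otimes_{\pi}\cdots\otimes_{\pi}\mathcal{E}^{l}$. For $l,d_{i}\geq 2$ this identity is false: by (\ref{eq:tensorial ellipsoids char}) a tensorial ellipsoid equals the Hilbertian product $\mathcal{E}^{1}\otimes_{2}\cdots\otimes_{2}\mathcal{E}^{l}$ of its factor ellipsoids, and it only contains their projective product, which is not itself an ellipsoid. What your proof actually needs --- and what your ``more careful'' passage already contains --- is the pair of facts $conv_{\otimes}(\mathcal{E})=\mathcal{E}^{1}\otimes_{\pi}\cdots\otimes_{\pi}\mathcal{E}^{l}$ and $L\ddot{o}w(\mathcal{E}^{1}\otimes_{\pi}\cdots\otimes_{\pi}\mathcal{E}^{l})=\mathcal{E}^{1}\otimes_{2}\cdots\otimes_{2}\mathcal{E}^{l}=\mathcal{E}$; the shortcut ``L\"{o}wner of an ellipsoid is itself'' is only available after (\ref{eq:low ellipsoid and pprojective}) has converted $\otimes_{\pi}$ into $\otimes_{2}$. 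With that sentence repaired, the proof is complete.
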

\begin{proof}
The map $l_{\otimes}$ is continuous and $GL_{\otimes}$-equivariant due to  the L\"{o}wner map $L\ddot{o}w$ is a continuous $GL(\otimes_{i=1}^{l}\mathbb{R}^{d_{i}})$-equivariant  map \cite[Theorem 3.6]{Antonyan2013}, and the map $conv_\otimes$ is continuous and $GL_\otimes$-equivariant  (see Proposition \ref{prop:conv function}).
To prove that $l_{\otimes}$ is a retraction. Let $\mathcal{E}\subset\otimes_{i=1}^{l}\mathbb{R}^{d_{i}}$ be a tensorial ellipsoid,
then, by \cite[Corollary 4.3]{tensorialbodies}, there exist $T_{i}\in GL(d_{i}),$ $i=1,...,l,$ such that $\mathcal{E}=T_{1}\otimes\cdots\otimes T_{l}(B_{2}^{d_{1},\ldots,d_{l}}).$
Thus,
\begin{gather*}
l_{\otimes}(\mathcal{E}) =l_{\otimes}(T_{1}\otimes\cdots\otimes T_{l}(B_{2}^{d_{1},\ldots,d_{l}}))
 =(T_{1}\otimes\cdots\otimes T_{l})l_{\otimes}(B_{2}^{d_{1},\ldots,d_{l}})\\
 =T_{1}\otimes\cdots\otimes T_{l}(B_{2}^{d_{1},\ldots,d_{l}})
  =\mathcal{E}.
\end{gather*}
 The third equality follows from (\ref{eq:euclidean ball is tensorial}), (\ref{eq:conv is constant on Bq1q2})  and (\ref{eq:low ellipsoid and pprojective}).
\end{proof}


\begin{prop}
\label{prop:-Lsigma es compacto} $\mathscr{L}_{\otimes}(\otimes_{i=1}^{l}\mathbb{R}^{d_{i}})$ enjoys of the following properties:
\begin{enumerate}
\item $\mathscr{L}_{\otimes}(\otimes_{i=1}^{l}\mathbb{R}^{d_{i}})$
is $O_{\otimes}$-invariant.

\item The $GL_{\otimes}$-saturation of $\mathscr{L}_{\otimes}(\otimes_{i=1}^{l}\mathbb{R}^{d_{i}})$
coincides with $\mathcal{B}_{\otimes}(\otimes_{i=1}^{l}\mathbb{R}^{d_{i}}).$

\item Let $T\in GL_{\otimes}(\otimes_{i=1}^{l}\mathbb{R}^{d_{i}}).$
If $T\left(\mathcal{\mathscr{L}}_{\otimes}(\otimes_{i=1}^{l}\mathbb{R}^{d_{i}})\right)\cap\mathscr{L}_{\otimes}(\otimes_{i=1}^{l}\mathbb{R}^{d_{i}})\neq\emptyset,$
then 
$T$ is an orthogonal map.

\item $\mathscr{L}_{\otimes}(\otimes_{i=1}^{l}\mathbb{R}^{d_{i}})$
is compact.
\end{enumerate}
\end{prop}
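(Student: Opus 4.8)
The plan is to establish the four properties essentially in the order stated, using the equivariant retraction $l_\otimes = L\ddot{o}w \circ conv_\otimes$ and its properties from Proposition \ref{prop:lsigma es RETRACCION}, together with the compactness results already obtained for the space of tensorial bodies. For (1), since $\mathscr{L}_\otimes = l_\otimes^{-1}(B_2^{d_1,\ldots,d_l})$ and $l_\otimes$ is $GL_\otimes$-equivariant, for any $T \in O_\otimes$ and $Q \in \mathscr{L}_\otimes$ we get $l_\otimes(TQ) = T\,l_\otimes(Q) = T(B_2^{d_1,\ldots,d_l}) = B_2^{d_1,\ldots,d_l}$, the last equality because orthogonal maps fix the Euclidean ball; hence $TQ \in \mathscr{L}_\otimes$. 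For (2), the inclusion $GL_\otimes(\mathscr{L}_\otimes) \subseteq \mathcal{B}_\otimes$ is clear since $GL_\otimes$ preserves tensorial bodies. For the reverse inclusion, given $Q \in \mathcal{B}_\otimes$, the ellipsoid $l_\otimes(Q)$ is a tensorial ellipsoid, so by \cite[Corollary 4.3]{tensorialbodies} there is $S \in GL_\otimes$ with $l_\otimes(Q) = S(B_2^{d_1,\ldots,d_l})$; then $l_\otimes(S^{-1}Q) = S^{-1}l_\otimes(Q) = B_2^{d_1,\ldots,d_l}$, so $S^{-1}Q \in \mathscr{L}_\otimes$ and $Q \in GL_\otimes(\mathscr{L}_\otimes)$.

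For (3), suppose $T \in GL_\otimes$ and there is $Q \in \mathscr{L}_\otimes$ with $TQ \in \mathscr{L}_\otimes$. Then $B_2^{d_1,\ldots,d_l} = l_\otimes(TQ) = T\,l_\otimes(Q) = T(B_2^{d_1,\ldots,d_l})$, so $T$ maps the Euclidean ball onto itself and therefore $T \in O(\otimes_{i=1}^l\mathbb{R}^{d_i})$; since also $T \in GL_\otimes$, we conclude $T \in O_\otimes$. For (4), the strategy is to combine closedness and boundedness in the Hausdorff metric with the Blaschke selection theorem, exactly as in the proof of Lemma \ref{lem:lema rel compactos}. Closedness of $\mathscr{L}_\otimes$ follows because $l_\otimes$ is continuous (Proposition \ref{prop:lsigma es RETRACCION}), $\{B_2^{d_1,\ldots,d_l}\}$ is closed, and $\mathcal{B}_\otimes$ is closed in the space of $0$-symmetric convex bodies (Proposition \ref{prop:Bsigma is closed}); so $\mathscr{L}_\otimes = l_\otimes^{-1}(B_2^{d_1,\ldots,d_l})$ is closed. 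For boundedness I need uniform outer and inner bounds: every $Q \in \mathscr{L}_\otimes$ is contained in its L\"owner ellipsoid $l_\otimes(Q) = B_2^{d_1,\ldots,d_l}$ (since $L\ddot{o}w$ of a convex body contains it and $Q \subseteq conv_\otimes(Q) = Q^1\otimes_\pi\cdots\otimes_\pi Q^l$ need not literally hold — here one must be slightly careful), so in fact one should argue $Q \subseteq L\ddot{o}w(Q^1\otimes_\pi\cdots\otimes_\pi Q^l) = B_2^{d_1,\ldots,d_l}$ using that $Q$ is tensorial with respect to $Q^1,\ldots,Q^l$, giving $Q \subseteq Q^1\otimes_\epsilon\cdots\otimes_\epsilon Q^l$; combined with the Loewner/John machinery this yields the uniform outer bound by $B_2^{d_1,\ldots,d_l}$. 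For a uniform inner bound one uses that the John ellipsoid of a body $C$ in $\mathbb{R}^n$ satisfies $\frac{1}{\sqrt{n}}L\ddot{o}w(C) \subseteq C$ (John's theorem for symmetric bodies), applied to $Q^1\otimes_\pi\cdots\otimes_\pi Q^l$, to get a fixed ball $rB_2^{d_1,\ldots,d_l} \subseteq Q$ for all $Q \in \mathscr{L}_\otimes$ with $r$ depending only on the dimensions. Then any sequence in $\mathscr{L}_\otimes$ is uniformly bounded above and below, Blaschke gives a convergent subsequence with limit a $0$-symmetric convex body, the closedness of $\mathscr{L}_\otimes$ puts the limit in $\mathscr{L}_\otimes$, and compactness follows.

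The main obstacle I anticipate is the boundedness argument in (4): getting a uniform \emph{inner} ball $rB_2^{d_1,\ldots,d_l}\subseteq Q$ valid for all $Q\in\mathscr{L}_\otimes$. The constraint $l_\otimes(Q)=B_2^{d_1,\ldots,d_l}$ pins down only the L\"owner ellipsoid of $conv_\otimes(Q)=Q^1\otimes_\pi\cdots\otimes_\pi Q^l$, not of $Q$ itself, so I must pass through the inclusions $Q^1\otimes_\pi\cdots\otimes_\pi Q^l\subseteq Q\subseteq Q^1\otimes_\epsilon\cdots\otimes_\epsilon Q^l$ and control both ends. The outer bound is the easier half via $Q\subseteq Q^1\otimes_\epsilon\cdots\otimes_\epsilon Q^l=(Q^{1\circ}\otimes_\pi\cdots\otimes_\pi Q^{l\circ})^\circ$ together with the behavior of L\"owner ellipsoids under polarity and projective products; the inner bound requires invoking John's distance estimate $d_{BM}(X,\ell_2^n)\le\sqrt{n}$ applied to the normed space whose unit ball is $Q^1\otimes_\pi\cdots\otimes_\pi Q^l$, which forces $\tfrac{1}{\sqrt{d}}B_2^{d_1,\ldots,d_l}\subseteq Q^1\otimes_\pi\cdots\otimes_\pi Q^l\subseteq Q$. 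Once these two uniform bounds are in hand the rest is the standard Blaschke-selection routine already used in Lemma \ref{lem:lema rel compactos}.
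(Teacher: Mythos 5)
Your proofs of (1)--(3) are in substance the same as the paper's: $O_\otimes$-invariance and statement (3) via the $GL_\otimes$-equivariance of $l_\otimes$ (Proposition \ref{prop:lsigma es RETRACCION}), and the saturation statement by pulling $Q$ back along an element of $GL_\otimes$ that maps $B_2^{d_1,\ldots,d_l}$ onto the tensorial ellipsoid $l_\otimes(Q)$ (the paper takes $T_1\otimes\cdots\otimes T_l$ with $L\ddot{o}w(Q^i)=T_i(B_2^{d_i})$; you invoke \cite[Corollary 4.3]{tensorialbodies} abstractly --- same argument). For (4) you genuinely deviate from the paper. The paper does not produce uniform balls: it notes that the bodies $Q_k^1\otimes_\pi\cdots\otimes_\pi Q_k^l$ lie in the compact set of $0$-symmetric bodies whose L\"owner ellipsoid is $B_2^{d_1,\ldots,d_l}$ (Remark 1 of \cite{banacamazurcompactum}), extracts a convergent subsequence, deduces convergence of the factors $Q_{k_j}^i$ via Lemma \ref{lem:convergencia hausdorff implica convegencia de normas} and \cite[Proposition 3.7]{tensorialbodies}, then uses continuity of $\otimes_\pi,\otimes_\epsilon$, the sandwich $Q_{k_j}^1\otimes_\pi\cdots\subseteq Q_{k_j}\subseteq Q_{k_j}^1\otimes_\epsilon\cdots$, Blaschke, and continuity of $l_\otimes$. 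Your route --- uniform inner and outer bounds, closedness of $\mathscr{L}_\otimes=l_\otimes^{-1}(B_2^{d_1,\ldots,d_l})$ inside the closed set $\mathcal{B}_\otimes$ (Proposition \ref{prop:Bsigma is closed}), then Blaschke --- is a legitimate and more quantitative alternative; in particular your inner bound $\tfrac{1}{\sqrt d}B_2^{d_1,\ldots,d_l}\subseteq Q^1\otimes_\pi\cdots\otimes_\pi Q^l\subseteq Q$ from John's theorem is correct and is exactly what guarantees that the Blaschke limit is a $0$-symmetric \emph{body}, so that Proposition \ref{prop:Bsigma is closed} and the continuity of $l_\otimes$ apply.

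The one genuine flaw is your outer bound: the claim $Q\subseteq B_2^{d_1,\ldots,d_l}$ for all $Q\in\mathscr{L}_\otimes$ is false. The slice condition pins down only the L\"owner ellipsoid of $conv_\otimes(Q)=Q^1\otimes_\pi\cdots\otimes_\pi Q^l$, and $Q$ can reach all the way up to the injective product, which sticks out of that ellipsoid: for example $Q=B_2^{d_1}\otimes_\epsilon\cdots\otimes_\epsilon B_2^{d_l}$ has $Q^i=B_2^{d_i}$, hence $l_\otimes(Q)=B_2^{d_1,\ldots,d_l}$ by \eqref{eq:low ellipsoid and pprojective} and \eqref{eq:euclidean ball is tensorial}, so $Q\in\mathscr{L}_\otimes$; yet already for $l=d_1=d_2=2$ the vector $e_1\otimes e_1+e_2\otimes e_2$ has injective norm $1$ and Hilbert norm $\sqrt2$, so $Q\not\subseteq B_2^{d_1,\ldots,d_l}$. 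The gap is repairable, because your argument only needs \emph{some} uniform bound: writing $L\ddot{o}w(Q^i)=T_i(B_2^{d_i})$, the slice condition together with \eqref{eq:low ellipsoid and pprojective} gives $T_1\otimes\cdots\otimes T_l(B_2^{d_1,\ldots,d_l})=B_2^{d_1,\ldots,d_l}$, and then
\[
Q\subseteq Q^{1}\otimes_{\epsilon}\cdots\otimes_{\epsilon}Q^{l}\subseteq T_1\otimes\cdots\otimes T_l\bigl(B_2^{d_1}\otimes_{\epsilon}\cdots\otimes_{\epsilon}B_2^{d_l}\bigr)\subseteq c\,T_1\otimes\cdots\otimes T_l\bigl(B_2^{d_1,\ldots,d_l}\bigr)=c\,B_2^{d_1,\ldots,d_l},
\]
where the second inclusion uses monotonicity of $\otimes_\epsilon$ and \eqref{eq:gltensor image of proj and inj tp}, and $c$ is the dimensional constant with $B_2^{d_1}\otimes_{\epsilon}\cdots\otimes_{\epsilon}B_2^{d_l}\subseteq c\,B_2^{d_1,\ldots,d_l}$ (equivalence of $\epsilon(\cdot)$ and $\|\cdot\|_H$ in finite dimensions). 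With the outer bound corrected to $c\,B_2^{d_1,\ldots,d_l}$, your proof of (4), and hence of the whole proposition, goes through.
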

\begin{proof}
(1). Let $Q\in\mathscr{L}_{\otimes}$
and let $U$ be an orthogonal map in $O_{\otimes}.$
By the equivariance of $l_\otimes$ (Proposition \ref{prop:lsigma es RETRACCION}),
$l_{\otimes}\left(UQ\right)=U(l_{\otimes}(Q))=U(B_{2}^{d_{1},\ldots,d_{l}})=B_{2}^{d_{1},\ldots,d_{l}}.$
Therefore $U(Q)\in\mathscr{L}_{\otimes}$ and the invariance of $\mathscr{L}_{\otimes}$ is proved.

(2). Let $Q\subset\otimes_{i=1}^{l}\mathbb{R}^{d_{i}}$ be an arbitrary tensorial body, we will see that it belongs to the $GL_\otimes$-saturation of $\mathscr{L}_{\otimes}$. Since for each $i=1,\ldots,l,$ there exists $T_{i}\in GL(d_i)$ such that
$L\ddot{o}w(Q^{i})=T_{i}(B_{2}^{d_{i}}),$ then,
by setting $P:=T_{1}^{-1}\otimes\cdots\otimes T_{l}^{-1}(Q),$ we have:
\begin{gather*}
l_{\otimes}\left(P\right) =l_{\otimes}\left(T_{1}^{-1}\otimes\cdots\otimes T_{l}^{-1}\left(Q\right)\right)
 \overset{*}{=}T_{1}^{-1}\otimes\cdots\otimes T_{l}^{-1}\left(l_{\otimes}\left(Q\right)\right)\\
 \overset{**}{=}T_{1}^{-1}\otimes\cdots\otimes T_{l}^{-1}\left(L\ddot{o}w(Q^{1}) \otimes_{2}\cdots\otimes_{2}L\ddot{o}w(Q^{l})\right)\\
=T_{1}^{-1}\otimes\cdots\otimes T_{l}^{-1}\left(T_{1}(B_{2}^{d_{1}})\otimes_{2}\cdots\otimes_{2}T_{l}(B_{2}^{d_{l}})\right)
  \\ \overset{***}{=}B_{2}^{d_{1}}\otimes_{2}\cdots\otimes_{2}B_{2}^{d_{l}}=
  B_{2}^{d_1,\ldots,d_l}.
\end{gather*}
(*) follows from Proposition \ref{prop:lsigma es RETRACCION}. (**) follows from (\ref{eq:low ellipsoid and pprojective}). (***) follows from the second equality in (\ref{eq:tensorial ellipsoids char}).

Therefore $P\in\mathscr{L}_{\otimes}$ and $Q=T_{1}\otimes\cdots\otimes T_{l}(P)$ belongs to the $GL_{\otimes}$-saturation of $\mathscr{L}_{\otimes}.$

(3).
Let $T\in Gl_{\otimes}$
such that $TQ\in\mathcal{\mathscr{L}}_\otimes,$ for some $Q\in\mathcal{\mathscr{L}}_{\otimes}.$
Then, by the $GL_\otimes$-equivariance of $l_\otimes,$ $T(B_2^{d_1\ldots,d_l})=T(l_{\otimes}(Q))=l_{\otimes}(TQ)=B_2^{d_1\ldots,d_l}.$ This proves that  $T$ is an orthogonal map in $GL_\otimes$. So, $T\in O_\otimes.$

(4). We will prove that any sequence in $\mathscr{L}_{\otimes}$ has a convergent subsequence.
Let $Q_{k}$ be a sequence contained
in $\mathscr{L}_{\otimes}.$
Then for each $k,$ 
$
L\ddot{o}w(Q_{k}^{1}\otimes_{\pi}\cdots\otimes_{\pi}Q_{k}^{l})=B_2^{d_1,\ldots,d_l}
$
and so $Q_{k}^{1}\otimes_{\pi}\cdots\otimes_{\pi}Q_{k}^{l}$ belongs to the compact set  $\mathscr{L}(\otimes_{i=1}^{l}\mathbb{R}^{d_{i}})\subset\mathcal{B}(\otimes_{i=1}^{l}\mathbb{R}^{d_{i}})$ of \cite[Remark 1]{banacamazurcompactum}. Consequently,
there exists a subsequence $Q_{k_{j}}^{1}\otimes_{\pi}\cdots\otimes_{\pi}Q_{k_{j}}^{l}$
converging to some $0$-symmetric convex body $D\subset\otimes_{i=1}^{l}\mathbb{R}^{d_{i}},$
such that $L\ddot{o}w(D)=B_2^{d_1,\ldots,d_l}.$ Hence, from (2) in Lemma \ref{lem:convergencia hausdorff implica convegencia de normas} and \cite[Proposition 3.7]{tensorialbodies}, we have that $D$ is a tensorial body for which $Q_{k_j}^i$ and $(Q_{k_j}^i)^{\circ}$ converge to $D^i$ and $(D^i)^{\circ}$ respectively. 

Now, by the continuity of $\otimes_\pi$ and $\otimes_\epsilon$ (Proposition \ref{prop:porducto projectivo e injectivo es continuo con hauss}), we have that $Q_{k_{j}}^{1}\otimes_{\pi}\cdots\otimes_{\pi}Q_{k_{j}}^{l}$ and $Q_{k_{j}}^{1}\otimes_{\epsilon}\cdots\otimes_{\epsilon}Q_{k_{j}}^{l}$ converge to $D^{1}\otimes_{\pi}\cdots\otimes_{\pi}D^{l}$ and $D^{1}\otimes_{\epsilon}\cdots\otimes_{\epsilon}D^{l},$ respectively. Since we also have that
$
Q_{k_{j}}^{1}\otimes_{\pi}\cdots\otimes_{\pi}Q_{k_{j}}^{l}\subseteq Q_{k_{j}}\subseteq Q_{k_{j}}^{1}\otimes_{\epsilon}\cdots\otimes_{\epsilon}Q_{k_{j}}^{l},
$
then by the Blaschke selection theorem (\cite[Theorem 1.8.6]{Schneider1993}), we can suppose that $Q_{k_j}$ converges to some compact convex set $Q.$ In such case, we must have that $D^{1}\otimes_{\pi}\cdots\otimes_{\pi}D^{l}\subseteq Q\subseteq D^{1}\otimes_{\epsilon}\cdots\otimes_{\epsilon}D^{l}$ and so $Q$ is a tensorial body.
 Finally, from the continuity of $l_\otimes$ (Proposition \ref{prop:lsigma es RETRACCION}), it follows that $l_\otimes(Q)=B_2^{d_1,\ldots,d_l}.$  This shows that $Q\in\mathscr{L}_{\otimes}$ 
 and the proof is completed.
\end{proof}

\begin{thm}
\label{thm:compact global slice}
$\mathscr{L}_{\otimes}\left(\otimes_{i=1}^{l}\mathbb{R}^{d_{i}}\right)$
is a compact global $O_{\otimes}$-slice for the proper $GL_{\otimes}$-space
$\mathcal{B}_{\otimes}\left(\otimes_{i=1}^{l}\mathbb{R}^{d_{i}}\right).$
\end{thm}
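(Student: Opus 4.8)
The goal is to assemble the pieces already in hand into the definition of a global slice. Recall that a subset $S$ of a $G$-space $X$ is a global $H$-slice when (i) $S$ is $H$-invariant, (ii) $G(S)=X$, and (iii) there is a $G$-equivariant map $f\colon X\to G/H$ with $S=f^{-1}(eH)$. The plan is to take $G=GL_\otimes$, $H=O_\otimes$, $X=\mathcal{B}_\otimes(\otimes_{i=1}^l\mathbb{R}^{d_i})$, and $S=\mathscr{L}_\otimes(\otimes_{i=1}^l\mathbb{R}^{d_i})$. Properties (i) and (ii) are exactly parts (1) and (2) of Proposition \ref{prop:-Lsigma es compacto}, and compactness of $\mathscr{L}_\otimes$ is part (4); so the only thing left is to produce the equivariant map $f$.

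The construction of $f$ is the heart of the matter and follows the standard slice-theorem recipe for proper actions. First I would observe that, by Proposition \ref{prop:lsigma es RETRACCION}, $l_\otimes$ is a $GL_\otimes$-equivariant retraction onto $\mathscr{E}_\otimes$, and by Corollary \ref{cor:glsigma over osigma and ellipsoids} together with \cite[Proposition 1.1.5]{Palais2} (applied to the proper action of Theorem \ref{thm:the action is proper}) the orbit of $B_2^{d_1,\ldots,d_l}$ under $GL_\otimes$ is all of $\mathscr{E}_\otimes$, and this orbit is $GL_\otimes$-equivariantly homeomorphic to $GL_\otimes/O_\otimes$ since $O_\otimes$ is precisely the stabilizer of $B_2^{d_1,\ldots,d_l}$. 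Composing, we get a $GL_\otimes$-equivariant map
\[
f\colon\mathcal{B}_\otimes(\otimes_{i=1}^{l}\mathbb{R}^{d_{i}})\xrightarrow{\,l_\otimes\,}\mathscr{E}_\otimes(\otimes_{i=1}^{l}\mathbb{R}^{d_{i}})\xrightarrow{\ \cong\ }GL_\otimes/O_\otimes,
\]
and by construction $f^{-1}(eO_\otimes)=l_\otimes^{-1}(B_2^{d_1,\ldots,d_l})=\mathscr{L}_\otimes(\otimes_{i=1}^{l}\mathbb{R}^{d_{i}})$. Since $G(S)=X$ the tubular set is everything, so the openness requirement in the slice definition is automatic. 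This gives all three defining properties, and compactness is already recorded, finishing the proof.

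\textbf{The main obstacle.} The one point that needs care is that the homeomorphism $\mathscr{E}_\otimes\cong GL_\otimes/O_\otimes$ is genuinely $GL_\otimes$-equivariant, not merely a bijection: one must check that the orbit map $GL_\otimes\to\mathscr{E}_\otimes$, $T\mapsto TB_2^{d_1,\ldots,d_l}$, descends to a homeomorphism $GL_\otimes/O_\otimes\to\mathscr{E}_\otimes$. This is exactly where properness of the action (Theorem \ref{thm:the action is proper}) is used, via \cite[Proposition 1.1.5]{Palais2}, which guarantees that for a proper action the natural continuous bijection $G/G_x\to G(x)$ is a homeomorphism; equivariance of this map is immediate from the definition of the $G$-action on $G/G_x$ by left translation. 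A secondary, entirely routine point is verifying $GL_\otimes$-equivariance of $l_\otimes$, which is already Proposition \ref{prop:lsigma es RETRACCION}, and the identification $f^{-1}(eO_\otimes)=\mathscr{L}_\otimes$, which is the definition of $\mathscr{L}_\otimes$ unwound through the homeomorphism. Thus essentially no new estimates are needed; the proof is a matter of invoking Theorem \ref{thm:the action is proper}, Proposition \ref{prop:lsigma es RETRACCION}, Corollary \ref{cor:glsigma over osigma and ellipsoids}, Proposition \ref{prop:-Lsigma es compacto}, and \cite[Proposition 1.1.5]{Palais2} in the right order.
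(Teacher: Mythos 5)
Your proposal is correct and follows essentially the same route as the paper: compactness, $O_\otimes$-invariance and saturation from Proposition \ref{prop:-Lsigma es compacto}, and the equivariant map $f$ obtained by composing $l_\otimes$ (Proposition \ref{prop:lsigma es RETRACCION}) with the $GL_\otimes$-equivariant homeomorphism $\mathscr{E}_\otimes\cong GL_\otimes/O_\otimes$ coming from Theorem \ref{thm:the action is proper}, \cite[Corollary 4.3]{tensorialbodies} and \cite[Proposition 1.1.5]{Palais2}, with $\mathscr{L}_\otimes=l_\otimes^{-1}(B_2^{d_1,\ldots,d_l})=f^{-1}(eO_\otimes)$. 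Your added remarks on equivariance of the orbit-map identification are exactly the points the paper also relies on, so there is no substantive difference.
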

\begin{proof}
The compactness of $\mathscr{L}_{\otimes}$
was proved in Proposition \ref{prop:-Lsigma es compacto}. Now, we will prove that it is a  global $GL_\otimes$-slice for the space of tensorial bodies. 
Since $\mathscr{E}_{\otimes}$
is the $GL_{\otimes}$-orbit of $B_{2}^{d_1\ldots,d_l},$ see \cite[Corollary 4.3]{tensorialbodies}, and
$O_{\otimes}$
is the stabilizer of $B_{2}^{d_1\ldots,d_l},$
then, by \cite[Proposition 1.1.5]{Palais2}, there is a $GL_\otimes$-equivariant homeomorphism between $\mathscr{E}_{\otimes}$
and $GL_{\otimes}/O_{\otimes}.$
The latter together with Proposition \ref{prop:lsigma es RETRACCION} and the fact that 
$
\mathscr{L}_{\otimes}(\otimes_{i=1}^{l}\mathbb{R}^{d_{i}})=l_{\otimes}^{-1}(B_{2}^{d_1\ldots,d_l})
$
yield a $GL_{\otimes}$-equivariant map $f:\mathcal{B}_{\otimes}\rightarrow GL_{\otimes}/O_{\otimes}$
such that $\mathscr{L}_{\otimes}(\otimes_{i=1}^{l}\mathbb{R}^{d_{i}})=f^{-1}(O_{\otimes}(\otimes_{i=1}^{l}\mathbb{R}^{d_{i}})).$ 
\end{proof}

\subsection{Topological representatives for $\mathcal{B}_{\otimes}(\otimes_{i=1}^{l}\mathbb{R}^{d_i})$ and $\mathcal{BM}_{\otimes}(\otimes_{i=1}^{l}\mathbb{R}^{d_i})$}
\label{sec:main results}

Here, we exhibit topological representatives for both the space of tensorial bodies and the compactum $\mathcal{BM}_\otimes(\otimes_{i=1}^{l}\mathbb{R}^{d_i})$. Namely, in Corollary \ref{cor:topol rep for BM},  we prove that $\mathcal{BM}_\otimes(\otimes_{i=1}^{l}\mathbb{R}^{d_i})$ is homeomorphic to two orbit spaces $\mathcal{B}_{\otimes}/GL_{\otimes}$ and $\mathscr{L}_{\otimes}/O_{\otimes}.$ In Corollaries \ref{cor:homeomorfismo importante} and \ref{cor: tensorial bodies and Rp}, we prove that $\mathcal{B}_\otimes$ is homeomorphic to the product $\mathcal{\mathscr{L}}_{\otimes}\times\mathscr{E}_{\otimes}$ and $\mathscr{L}_{\otimes}\times\mathbb{R}^p,$ respectively.
It is worth to notice that these results extend, to the context of tensorial bodies, Corollary 1 of \cite{banacamazurcompactum} and Corollaries 3.8  and 3.9 of \cite{Antonyan2013}  about the Banach-Mazur compactum and the space of convex bodies in $\mathbb{R}^d$, respectively. 

\begin{prop}
\label{prop:reprensentative for BM tensor}
The following statements hold:
\begin{enumerate}
\item The orbit
space $\mathcal{B}_{\otimes}(\otimes_{i=1}^{l}\mathbb{R}^{d_{i}})/GL_{\otimes}(\otimes_{i=1}^{l}\mathbb{R}^{d_{i}})$
is compact.
\item The spaces $\mathscr{L}_{\otimes}(\otimes_{i=1}^{l}\mathbb{R}^{d_{i}})/O_{\otimes}(\otimes_{i=1}^{l}\mathbb{R}^{d_{i}})$
and $\mathcal{B}_{\otimes}(\otimes_{i=1}^{l}\mathbb{R}^{d_{i}})/GL_{\otimes}(\otimes_{i=1}^{l}\mathbb{R}^{d_{i}})$
are ho\-meomorphic.
\end{enumerate}
\end{prop}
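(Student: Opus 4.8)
The plan is to deduce both statements from the compact global slice theorem (Theorem~\ref{thm:compact global slice}) together with standard facts about proper actions. For part~(1), I would first recall that $\mathscr{L}_\otimes$ is a compact $O_\otimes$-global slice for the proper $GL_\otimes$-space $\mathcal{B}_\otimes$, so in particular its $GL_\otimes$-saturation is all of $\mathcal{B}_\otimes$; hence the restriction of the orbit projection $p\colon\mathcal{B}_\otimes\to\mathcal{B}_\otimes/GL_\otimes$ to $\mathscr{L}_\otimes$ is surjective. Since $p$ is continuous and $\mathscr{L}_\otimes$ is compact, $\mathcal{B}_\otimes/GL_\otimes=p(\mathscr{L}_\otimes)$ is compact. (It is Hausdorff because the action is proper, so the orbit space of a proper action on a Tychonoff space is Tychonoff; one can cite \cite[Ch.~I]{Palais2} or \cite{Abels} for this.) This gives~(1).

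For part~(2), the idea is the classical slice-theory observation that the orbit space of a tubular set by the big group coincides with the orbit space of the slice by the small group. Concretely, consider the $GL_\otimes$-equivariant map $f\colon\mathcal{B}_\otimes\to GL_\otimes/O_\otimes$ furnished by Theorem~\ref{thm:compact global slice}, for which $\mathscr{L}_\otimes=f^{-1}(O_\otimes)$. One has the restriction of the orbit map $q\colon\mathscr{L}_\otimes\to\mathcal{B}_\otimes/GL_\otimes$, i.e. $q=p|_{\mathscr{L}_\otimes}$. I would check that $q$ is constant on $O_\otimes$-orbits (immediate, since $O_\otimes\subseteq GL_\otimes$) and hence descends to a continuous map $\bar q\colon\mathscr{L}_\otimes/O_\otimes\to\mathcal{B}_\otimes/GL_\otimes$. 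Surjectivity of $\bar q$ is exactly the saturation statement $(2)$ of Proposition~\ref{prop:-Lsigma es compacto}. Injectivity is the key point: if $Q,Q'\in\mathscr{L}_\otimes$ lie in the same $GL_\otimes$-orbit, say $Q'=TQ$ with $T\in GL_\otimes$, then $T\big(\mathscr{L}_\otimes\big)\cap\mathscr{L}_\otimes\neq\emptyset$, so by part~(3) of Proposition~\ref{prop:-Lsigma es compacto} we get $T\in O_\otimes$, whence $Q$ and $Q'$ are in the same $O_\otimes$-orbit. Thus $\bar q$ is a continuous bijection.

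Finally, to upgrade $\bar q$ to a homeomorphism I would invoke compactness: $\mathscr{L}_\otimes$ is compact (Proposition~\ref{prop:-Lsigma es compacto}), hence so is $\mathscr{L}_\otimes/O_\otimes$; and $\mathcal{B}_\otimes/GL_\otimes$ is Hausdorff as noted above. A continuous bijection from a compact space to a Hausdorff space is a homeomorphism, so $\bar q$ is the desired homeomorphism and~(2) follows. The only genuinely delicate ingredient is verifying that $\mathcal{B}_\otimes/GL_\otimes$ is Hausdorff, i.e. that properness of the action descends to separation of the orbit space; this is where I expect the main obstacle to lie, and it is handled by appealing to the general theory of proper actions (\cite[Theorem 1.2.5 or its corollaries]{Palais2}, \cite{Abels}), rather than by a bare-hands argument. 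Everything else is a routine assembly of the slice theorem with the point-set facts about compact-to-Hausdorff bijections.
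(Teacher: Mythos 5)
Your proposal is correct and follows essentially the same route as the paper: both parts are obtained by restricting the orbit map to the compact slice $\mathscr{L}_\otimes$, using the saturation and transporter properties of Proposition \ref{prop:-Lsigma es compacto} for surjectivity and injectivity, and concluding with the compact-to-Hausdorff continuous-bijection argument (the paper cites \cite[Proposition 1.1.4]{Palais2} for Hausdorffness of $\mathcal{B}_\otimes/GL_\otimes$ and \cite[Theorem 3.1]{Bredon} for compactness of $\mathscr{L}_\otimes/O_\otimes$). The only difference is cosmetic: you route the saturation statement through Theorem \ref{thm:compact global slice}, whereas the paper invokes Proposition \ref{prop:-Lsigma es compacto} directly.
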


\begin{proof}
(1). From Proposition \ref{prop:-Lsigma es compacto}, $\mathcal{\mathscr{L}}_{\otimes}$
is compact and $GL_{\otimes}\left(\mathscr{L}_{\otimes}\right)=\mathcal{B}_{\otimes}.$
Thus, by the continuity of the orbit map 
$
\pi:\mathcal{B}_{\otimes}\rightarrow\mathcal{B}_{\otimes}/GL_{\otimes},
$
we have that 
$
\pi\left(\mathscr{L}_{\otimes}\right)=\mathcal{B}_{\otimes}/GL_{\otimes}
$
 is compact.

(2). Denote by $\pi_{\mid}$ the restriction of the orbit map to $\mathscr{L}_{\otimes}.$
From (1), we know that $\pi_{\mid}$ is a continuous surjective map
from $\mathscr{L}_{\otimes}\left(\otimes_{i=1}^{l}\mathbb{R}^{d_{i}}\right)$
onto $\mathcal{B}_{\otimes}/GL_{\otimes}.$ Also, notice that from (3) in Proposition
\ref{prop:-Lsigma es compacto}, for every $P,Q\in\mathscr{L}_{\otimes}$
we have that $\pi_{\mid}\left(P\right)=\pi_{\mid}\left(Q\right)$ if and
only if $P$, $Q$ have the same $O_{\otimes}$-orbit. From this, $\pi_{\mid}$
induces a continuous bijective map 
$
\rho:\mathcal{\mathscr{L}}_{\otimes}/O_{\otimes}\rightarrow\mathcal{B}_{\otimes}/GL_{\otimes}.
$
 Since $\mathscr{L}_{\otimes}$ is compact (Proposition \ref{prop:-Lsigma es compacto}), the same holds for $\mathscr{L}_{\otimes}/O_{\otimes},$ see \cite[Theorem 3.1]{Bredon}. Finally, due to 
$
\mathcal{B}_{\otimes}/GL_{\otimes}
$
 is Hausdorff (see \cite[Proposition 1.1.4]{Palais2}),
we have that $\rho$ is a homeomorphism between $\mathcal{\mathscr{L}}_{\otimes}/O_{\otimes}$ and $\mathcal{B}_{\otimes}/GL_{\otimes}.$ This completes the proof.
\end{proof}

Recall that $\mathcal{BM}_\otimes(\otimes_{i=1}^{l}\mathbb{R}^{d_i})$ endowed with the metric $\log\delta_{\otimes}^{BM}$ is a compact metric space (\cite[Theorem 3.13]{tensorialbodies}). It consists of the equivalence classes of tensorial bodies determined by the relation $P\sim Q$ if and only if $T(P)=Q$ for some $T\in GL_\otimes$, see (\ref{eq:tensorial bm distance}).

\begin{cor}
\label{cor:topol rep for BM}
$\mathcal{BM}_\otimes(\otimes_{i=1}^{l}\mathbb{R}^{d_i})$ is homeomorphic to $\mathcal{B}_{\otimes}(\otimes_{i=1}^{l}\mathbb{R}^{d_{i}})/GL_{\otimes}(\otimes_{i=1}^{l}\mathbb{R}^{d_{i}})$ and $\mathscr{L}_{\otimes}(\otimes_{i=1}^{l}\mathbb{R}^{d_{i}})/O_{\otimes}(\otimes_{i=1}^{l}\mathbb{R}^{d_{i}}).$
\end{cor}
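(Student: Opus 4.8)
The plan is to deduce the corollary directly from the definition of the tensorial Banach--Mazur compactum together with the two homeomorphisms already established. Recall that $\mathcal{BM}_\otimes(\otimes_{i=1}^{l}\mathbb{R}^{d_i})$ is, by construction, the set of equivalence classes of tensorial bodies under the relation $P\sim Q\iff T(P)=Q$ for some $T\in GL_\otimes$, and that this is precisely the relation ``lie in the same $GL_\otimes$-orbit.'' Hence the \emph{set} underlying $\mathcal{BM}_\otimes(\otimes_{i=1}^{l}\mathbb{R}^{d_i})$ is literally the orbit space $\mathcal{B}_{\otimes}(\otimes_{i=1}^{l}\mathbb{R}^{d_{i}})/GL_{\otimes}(\otimes_{i=1}^{l}\mathbb{R}^{d_{i}})$. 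The content of the corollary is therefore that the metric topology induced by $\log\delta^{BM}_\otimes$ agrees with the quotient topology coming from the Hausdorff distance on $\mathcal{B}_\otimes$.

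First I would set up the natural bijection $\Phi\colon \mathcal{B}_{\otimes}/GL_{\otimes}\to \mathcal{BM}_\otimes(\otimes_{i=1}^{l}\mathbb{R}^{d_i})$ sending an orbit $GL_\otimes(Q)$ to the $\sim$-class of $Q$; as noted, this is well defined and bijective by the very definition of $\sim$. Next I would show $\Phi$ is continuous: the orbit map $\pi\colon \mathcal{B}_\otimes\to \mathcal{B}_\otimes/GL_\otimes$ is a quotient map, so it suffices to check that $Q\mapsto [Q]\in(\mathcal{BM}_\otimes,\log\delta^{BM}_\otimes)$ is continuous on $(\mathcal{B}_\otimes,\delta^H)$. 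For this I would use the inclusion-form description $\delta^H(P,Q)=\min\{\lambda\ge 0: P\subseteq Q+\lambda B, Q\subseteq P+\lambda B\}$ together with the fact that any tensorial body contains a fixed ball $\varepsilon B_2^{d_1,\ldots,d_l}$ on a neighborhood, so that a Hausdorff-small perturbation of $Q$ is sandwiched as $(1-c)Q\subseteq Q'\subseteq(1+c)Q$ with $c\to 0$; taking $T=\mathrm{id}\in GL_\otimes$ in the definition of $\delta^{BM}_\otimes$ then gives $\delta^{BM}_\otimes(Q,Q')\le \frac{1+c}{1-c}\to 1$, i.e. $\log\delta^{BM}_\otimes([Q],[Q'])\to 0$. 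Since $\log\delta^{BM}_\otimes$ is $GL_\otimes$-invariant in each argument, this factors through $\pi$ and yields continuity of $\Phi$.

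Then I would invoke compactness and the Hausdorff property to upgrade to a homeomorphism. By Proposition \ref{prop:reprensentative for BM tensor}(1) the space $\mathcal{B}_{\otimes}/GL_{\otimes}$ is compact; by \cite[Theorem 3.13]{tensorialbodies} the space $(\mathcal{BM}_\otimes,\log\delta^{BM}_\otimes)$ is a (compact) metric space, in particular Hausdorff. A continuous bijection from a compact space onto a Hausdorff space is a homeomorphism, so $\Phi$ is a homeomorphism and $\mathcal{BM}_\otimes(\otimes_{i=1}^{l}\mathbb{R}^{d_i})\cong \mathcal{B}_{\otimes}/GL_{\otimes}$. Finally, combining this with Proposition \ref{prop:reprensentative for BM tensor}(2), which gives $\mathscr{L}_{\otimes}/O_{\otimes}\cong \mathcal{B}_{\otimes}/GL_{\otimes}$, produces the second claimed homeomorphism $\mathcal{BM}_\otimes(\otimes_{i=1}^{l}\mathbb{R}^{d_i})\cong \mathscr{L}_{\otimes}/O_{\otimes}$.

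I expect the only real obstacle to be the continuity of $\Phi$, i.e. the comparison of the two metrics; everything else is a formal consequence of the compact-to-Hausdorff principle and the already-proven propositions. The delicate point in the continuity argument is that the sandwiching constant $c$ must be controlled uniformly near a given $Q$, which is exactly what the ``contains a fixed Euclidean ball'' observation (used already in Lemmas \ref{lem:LEMA natalia} and \ref{lem:lema rel compactos} and in the proof of Theorem \ref{thm:the action is proper}) provides; alternatively one can quote the analogous comparison for the classical Banach--Mazur distance from \cite{banacamazurcompactum} and restrict it, since $\delta^{BM}_\otimes$ dominates the classical $\delta^{BM}$ and both induce the same topology on each $GL_\otimes$-orbit's worth of perturbations. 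I would present whichever of these is shortest once the details are checked.
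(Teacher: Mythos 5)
Your proposal is correct and follows essentially the same route as the paper: the natural map $Q\mapsto[Q]$ is continuous and $GL_\otimes$-invariant, so it induces a continuous bijection from the compact orbit space $\mathcal{B}_{\otimes}/GL_{\otimes}$ (Proposition \ref{prop:reprensentative for BM tensor}(1)) onto the Hausdorff metric space $\mathcal{BM}_\otimes$, hence a homeomorphism, and the second identification follows from Proposition \ref{prop:reprensentative for BM tensor}(2); the paper merely justifies the continuity via Lemma \ref{lem:convergencia hausdorff implica convegencia de normas} and convergence of Minkowski functionals rather than your explicit sandwiching estimate. Only cosmetic fix needed: in your sandwich argument the witness in the definition of $\delta^{BM}_\otimes$ is a homothety $(1+c)\,\mathrm{id}\in GL_\otimes$ rather than $\mathrm{id}$ itself, which changes nothing.
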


\begin{proof}
Let $\Psi:\mathcal{B}_{\otimes}\left(\otimes_{i=1}^{l}\mathbb{R}^{d_{i}}\right)\rightarrow\mathcal{BM}_{\otimes}\left(\otimes_{i=1}^{l}\mathbb{R}^{d_{i}}\right)$ be the map sending each tensorial body $Q$  to its class $[Q]\in\mathcal{BM}_{\otimes}.$ It is not difficult to prove that, as a consequence of Lemma \ref{lem:convergencia hausdorff implica convegencia de normas} and \cite[Proposition 3.7]{tensorialbodies}, $\Psi$
is continuous. Clearly it is surjective, and $\Psi(P)=\Psi(Q)$ if and only if $Q$ belongs to the $GL_\otimes$-orbit of $P.$ Therefore, by the compactness of $\mathcal{B}_{\otimes}/GL_{\otimes}$ (see (1) in Proposition \ref{prop:reprensentative for BM tensor}) and the fact that $\mathcal{BM}_\otimes$ is a Hausdorff space, it follows that  $\Psi$ induces a homeomorphism between $\mathcal{B}_{\otimes}/GL_{\otimes}$
and $\mathcal{BM}_{\otimes}.$ This proves the first part of the corollary. The second part follows directly from  (2) in Proposition \ref{prop:reprensentative for BM tensor}.
\end{proof}

\begin{cor}
\label{cor:homeomorfismo importante} The following statements hold:
\begin{enumerate}
\item There exists an $O_{\otimes}$-equivariant
retraction 
$
r:\mathcal{B}_{\otimes}\left(\otimes_{i=1}^{l}\mathbb{R}^{d_{i}}\right)\rightarrow\mathcal{\mathscr{L}}_{\otimes}\left(\otimes_{i=1}^{l}\mathbb{R}^{d_{i}}\right)
$
 such that $r\left(P\right)$ belongs to the $GL_{\otimes}$-orbit
of $P.$
\item $\mathcal{B}_{\otimes}\left(\otimes_{i=1}^{l}\mathbb{R}^{d_{i}}\right)$
is homeomorphic to $\mathcal{\mathscr{L}}_{\otimes}\left(\otimes_{i=1}^{l}\mathbb{R}^{d_{i}}\right)\times\mathscr{E}_{\otimes}\left(\otimes_{i=1}^{l}\mathbb{R}^{d_{i}}\right).$
\end{enumerate}
\end{cor}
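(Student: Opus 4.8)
The two parts are intimately related: once we have the $O_\otimes$-equivariant retraction $r$ of part (1), part (2) follows by a standard "global slice $\times$ orbit" decomposition. So the plan is to construct $r$ first, then deduce the homeomorphism.

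For part (1), the natural candidate is built from the map $l_\otimes$. Given a tensorial body $P$, the ellipsoid $l_\otimes(P)$ is a \emph{tensorial} ellipsoid, so by \cite[Corollary 4.3]{tensorialbodies} it has the form $T_1\otimes\cdots\otimes T_l(B_2^{d_1,\ldots,d_l})$ for some $T_i\in GL(d_i)$; writing $g:=T_1\otimes\cdots\otimes T_l\in GL_\otimes$, we have $l_\otimes(P)=g(B_2^{d_1,\ldots,d_l})$. Then I would set $r(P):=g^{-1}(P)$. By $GL_\otimes$-equivariance of $l_\otimes$ (Proposition~\ref{prop:lsigma es RETRACCION}), $l_\otimes(g^{-1}P)=g^{-1}(l_\otimes P)=g^{-1}g(B_2^{d_1,\ldots,d_l})=B_2^{d_1,\ldots,d_l}$, so $r(P)\in\mathscr{L}_\otimes$; clearly $r(P)$ lies in the $GL_\otimes$-orbit of $P$, and if $P\in\mathscr{L}_\otimes$ already then one can take $g\in O_\otimes$ and $r(P)=P$ (using part (3) of Proposition~\ref{prop:-Lsigma es compacto} to see well-definedness is not an issue here). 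The real work is two-fold: (a) showing $r$ is \emph{well defined}, i.e. independent of the choice of $g$ with $l_\otimes(P)=g(B_2^{d_1,\ldots,d_l})$ — this is exactly the statement that any two such $g$ differ by an element of the stabilizer $O_\otimes$ of $B_2^{d_1,\ldots,d_l}$, and since $O_\otimes$-orbits... wait, $g^{-1}P$ and $(g')^{-1}P$ need only agree up to $O_\otimes$, so $r$ is genuinely well defined only as a map into $\mathscr{L}_\otimes/O_\otimes$ unless we make a measurable/continuous choice; (b) showing $r$ is continuous.

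To handle both (a) and (b) cleanly, I would instead \emph{not} try to pick $g$ canonically, but use the global-slice structure abstractly: by Theorem~\ref{thm:compact global slice}, $\mathscr{L}_\otimes$ is a global $O_\otimes$-slice, so the tubular set equals all of $\mathcal{B}_\otimes$, and the general theory of slices for proper actions (\cite{Palais2}, \cite{Abels}) gives a $GL_\otimes$-equivariant homeomorphism $\mathcal{B}_\otimes\cong GL_\otimes\times_{O_\otimes}\mathscr{L}_\otimes$, i.e. the twisted product $(GL_\otimes\times\mathscr{L}_\otimes)/O_\otimes$. Composing with the bundle projection $GL_\otimes\times_{O_\otimes}\mathscr{L}_\otimes\to GL_\otimes/O_\otimes\cong\mathscr{E}_\otimes$ and with $l_\otimes$ recovers everything. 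The retraction $r$ is then the composition $\mathcal{B}_\otimes\xrightarrow{\ \cong\ } GL_\otimes\times_{O_\otimes}\mathscr{L}_\otimes$, and then the map $[g,Q]\mapsto$ the class $[Q]$... but this again only lands in $\mathscr{L}_\otimes/O_\otimes$. The honest resolution: $\mathscr{E}_\otimes\cong GL_\otimes/O_\otimes\cong\mathbb{R}^p$ is contractible, in particular the principal $O_\otimes$-bundle $GL_\otimes\to GL_\otimes/O_\otimes$ is trivial (equivalently, the polar decomposition $GL_\otimes\cong O_\otimes\times\mathbb{R}^p$ from Lemma~\ref{lem: split thm of Gl tensor} gives a global section $s:\mathscr{E}_\otimes\to GL_\otimes$ with $s(B_2^{d_1,\ldots,d_l})=\mathrm{id}$). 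Then I define
$$
r(P):=s\big(l_\otimes(P)\big)^{-1}\,P.
$$
Since $l_\otimes$ is continuous (Proposition~\ref{prop:lsigma es RETRACCION}) and $s$ is continuous, $r$ is continuous; since $s(l_\otimes P)\in GL_\otimes$, $r(P)$ is in the $GL_\otimes$-orbit of $P$; and $l_\otimes(r(P))=s(l_\otimes P)^{-1}l_\otimes(P)=B_2^{d_1,\ldots,d_l}$ because $s$ is a section of $g\mapsto g(B_2^{d_1,\ldots,d_l})$, so $r(P)\in\mathscr{L}_\otimes$. For the $O_\otimes$-equivariance: if $U\in O_\otimes$ then $l_\otimes(UP)=U\,l_\otimes(P)$, and because $s$ is $O_\otimes$-equivariant on the fibre through $B_2^{d_1,\ldots,d_l}$ — here one uses $s(U\mathcal E)=U\,s(\mathcal E)$ for $U\in O_\otimes$, which holds for the polar-decomposition section since $O_\otimes$ is the group of units — we get $s(l_\otimes(UP))^{-1}=(U s(l_\otimes P))^{-1}=s(l_\otimes P)^{-1}U^{-1}$, hence $r(UP)=s(l_\otimes P)^{-1}U^{-1}UP=s(l_\otimes P)^{-1}P=r(P)$; but we want $r(UP)=U\,r(P)$, so more carefully one should define $r(P)=s(l_\otimes P)^{-1}P$ and check the equivariance using $s(U\mathcal E)=Us(\mathcal E)U^{-1}$... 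I will verify the precise normalization of $s$ so that $r$ is genuinely $O_\otimes$-equivariant; this is the one computational point requiring care.

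For part (2), with $r$ in hand the map
$$
\Phi:\mathcal{B}_\otimes\longrightarrow \mathscr{L}_\otimes\times\mathscr{E}_\otimes,\qquad
\Phi(P)=\big(r(P),\,l_\otimes(P)\big)
$$
is continuous. Its inverse is $\Psi(Q,\mathcal E)=s(\mathcal E)\,Q$, which is continuous since $s$ and the action are; one checks $\Psi\circ\Phi(P)=s(l_\otimes P)\,r(P)=s(l_\otimes P)s(l_\otimes P)^{-1}P=P$ and $\Phi\circ\Psi(Q,\mathcal E)=(r(s(\mathcal E)Q),\,l_\otimes(s(\mathcal E)Q))=(Q,\mathcal E)$, the last using $l_\otimes(s(\mathcal E)Q)=s(\mathcal E)l_\otimes(Q)=s(\mathcal E)(B_2^{d_1,\ldots,d_l})=\mathcal E$ since $Q\in\mathscr{L}_\otimes$, and $r(s(\mathcal E)Q)=s(\mathcal E)^{-1}s(\mathcal E)Q=Q$. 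The main obstacle is therefore entirely in part (1): producing the continuous, $O_\otimes$-equivariant section $s:\mathscr{E}_\otimes\to GL_\otimes$ of the orbit map and pinning down its normalization so that $r$ is honestly equivariant (not merely equivariant up to stabilizer). This rests on the appendix results — the polar decomposition of $GL_\otimes$ (Lemma~\ref{lem: split thm of Gl tensor}) together with $O_\otimes$ being maximal compact (Proposition~\ref{prop: maximal comp group}) — which identify $GL_\otimes$ with $O_\otimes\times P_\otimes$ where $P_\otimes$ is the space of positive-definite elements of $GL_\otimes$, so $s$ can be taken to send a tensorial ellipsoid to its unique positive-definite "square-root" generator; verifying this generator depends continuously and $O_\otimes$-equivariantly on the ellipsoid is the crux of the argument.
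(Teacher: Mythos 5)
Your construction is essentially the paper's own proof: your section $s$ is exactly the paper's map $\xi$, the inverse of $T\mapsto T(B_2^{d_1,\ldots,d_l})$ restricted to the positive part $\mathcal{A}$ of the polar decomposition of $GL_\otimes$ from Lemma \ref{lem: split thm of Gl tensor}, your retraction $r(P)=s\left(l_\otimes(P)\right)^{-1}P$ and the homeomorphism $P\mapsto\left(r(P),l_\otimes(P)\right)$ with inverse $(Q,\mathcal{E})\mapsto s(\mathcal{E})Q$ coincide with the paper's. The two points you defer are settled exactly as you suspect: $O_\otimes$ acts on $\mathcal{A}$ by conjugation, so $s(U\mathcal{E})=U\,s(\mathcal{E})\,U^{-1}$ yields $r(UQ)=U\,r(Q)$, and continuity of $s$ follows because the orbit map restricted to $\mathcal{A}$ is a closed continuous bijection onto $\mathscr{E}_\otimes$.
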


\begin{proof}
In order to prove the first part of the theorem, we will use Lemma \ref{lem: split thm of Gl tensor}.  There, it is proved that $GL_\otimes$ is homeomorphic to the product $\mathcal{A}\times O_\otimes$ where $\mathcal{A}\subset GL_\otimes$ consists of the tensor products $S_1\otimes\cdots\otimes S_l$ of  strictly positive maps $S_i\in GL(d_i),$ and the homeomorphism is given by the composition map.
\medskip

(1) To define the so called retraction, let us first consider $f:GL_{\otimes}\rightarrow\mathscr{E}_{\otimes}$
defined as
$
f(T):=T(B_{2}^{d_1,\ldots,d_l}).
$
By \cite[Proposition 1.1.5]{Palais2} and \cite[Corollary 4.3]{tensorialbodies}, $f$ induces
a $GL_{\otimes}$-equivariant homeomorphism 
$
\tilde{f}:GL_{\otimes}/O_{\otimes}\rightarrow\mathscr{E}_{\otimes}.
$
 Indeed, $f$ is the composition of the maps: 
\[
GL_{\otimes}\overset{\pi}{\rightarrow}Gl_{\otimes}/O_{\otimes}\overset{\tilde{f}}{\rightarrow}\mathscr{E}_{\otimes},
\]
where $\pi$ is the natural quotient map. 
From the compactness of $O_{\otimes},$ it follows  that $\pi$ is closed (\cite[Theorem 3.1]{Bredon}). 
Also, since $f$ is the composition of two closed maps, it must be closed too. From this and Lemma \ref{lem: split thm of Gl tensor}, we know that the restriction $f_{\mid\mathcal{A}}$
is a homeomorphism between $\mathcal{A}$ and $\mathscr{E}_{\otimes}$. Furthermore, if we let $O_{\otimes}$ acts on $\mathcal{A}$
by sending the pair $(U,S)\in O_{\otimes}\times\mathcal{A}$ to $USU^{-1},$
 and on $\mathscr{E}_{\otimes}$ by the action induced from $\mathcal{B}_{\otimes},$
then $f_{\mid\mathcal{A}}$ is an $O_{\otimes}$-equivariant homeomorphism. 

Denote by $\xi:\mathscr{E}_{\otimes}\rightarrow\mathcal{A}$ the inverse
map of $f_{\mid\mathcal{A}},$ then 
\begin{equation}
\left[\xi\left(\mathcal{E}\right)\right]^{-1}\mathcal{E}=B_{2}^{d_1,\ldots,d_l}\text{ for all }\mathcal{E}\in\mathscr{E}_{\otimes}.\label{eq:caracteristic de XI}
\end{equation}
We claim that the map $r:\mathcal{B}_{\otimes}\rightarrow\mathcal{\mathscr{L}}_{\otimes},$ defined as,
$
r\left(Q\right):=\left[\xi\left(l_{\otimes}\left(Q\right)\right)\right]^{-1}Q
$
is the desired $O_\otimes$-equivariant retraction.
By its definition $r$ is continuous and $r(Q)$ belongs to the $GL_\otimes$-orbit of $Q\in\mathcal{B}_\otimes.$ Also, from (\ref{eq:caracteristic de XI}) and the equivariance
of $l_\otimes$ (Proposition \ref{prop:lsigma es RETRACCION}), we have
\begin{equation*}
l_{\otimes}\left(r(Q)\right) =l_{\otimes}\left(\left[\xi\left(l_{\otimes}\left(Q\right)\right)\right]^{-1}Q\right)=\left[\xi\left(l_{\otimes}\left(Q\right)\right)\right]^{-1}l_{\otimes}\left(Q\right)=B_{2}^{d_1,\ldots,d_l}.
\end{equation*}
This shows that $r(Q)\in\mathscr{L}_{\otimes}$ for all $Q\in\mathcal{B}_\otimes.$ To prove that it is a retraction onto $\mathscr{L}_{\otimes}$, observe that for every $Q\in\mathscr{L}_{\otimes},$ 
\begin{equation*}
r\left(Q\right)=\left[\xi\left(l_{\otimes}\left(Q\right)\right)\right]^{-1}Q=\left[\xi\left(B_{2}^{d_1,\ldots,d_l}\right)\right]^{-1}Q=I_{\otimes_{i=1}^l\mathbb{R}^{d_{i}}}(Q)=Q.
\end{equation*}
To prove that $r$ is $O_{\otimes}$-equivariant, let $U\in O_{\otimes}$
and $Q\in\mathcal{B}_{\otimes}$ then 
\[
r\left(UQ\right)=\left[\xi\left(l_{\otimes}\left(UQ\right)\right)\right]^{-1}UQ=\left[\xi\left(Ul_{\otimes}\left(Q\right)\right)\right]^{-1}UQ.
\]
By the equivariance of $\xi,$ we have $\xi\left(Ul_{\otimes}\left(Q\right)\right)=U\xi\left(l_{\otimes}\left(Q\right)\right)U^{-1}.$
Thus
$
\left[\xi\left(Ul_{\otimes}\left(Q\right)\right)\right]^{-1}=U\left[\xi\left(l_{\otimes}\left(Q\right)\right)\right]^{-1}U^{-1}.
$
 Consequently, 
\[
r\left(UQ\right)=\left(U\left[\xi\left(l_{\otimes}\left(Q\right)\right)\right]^{-1}U^{-1}\right)UQ=U\left(\left[\xi\left(l_{\otimes}\left(Q\right)\right)\right]^{-1}Q\right)=U(r\left(Q\right))
\]
as required.

To prove (2), define $\varphi:\mathcal{B}_{\otimes}\rightarrow\mathcal{\mathscr{L}}_{\otimes}\times\mathscr{E}_{\otimes}$ as $\varphi(Q):=\left(r(Q),l_{\otimes}(Q)\right),$
then $\varphi$ is an $O_{\otimes}$-equivariant homeomorphism with
inverse map given by $\varphi^{-1}\left(Q,\mathcal{E}\right)=\xi\left(\mathcal{E}\right)Q.$ 
\end{proof}
The next corollary follows directly from the above and Corollary \ref{cor:glsigma over osigma and ellipsoids}:
\begin{cor}
\label{cor: tensorial bodies and Rp}
$\mathcal{B}_{\otimes}(\otimes_{i=1}^{l}\mathbb{R}^{d_{i}})$
is homeomorphic to $\mathcal{\mathscr{L}}_{\otimes}(\otimes_{i=1}^{l}\mathbb{R}^{d_{i}})\times\mathbb{R}^p,$ with $p=\frac{d_1(d_1+1)}{2}+\cdots+\frac{d_l(d_l+1)}{2}.$
\end{cor}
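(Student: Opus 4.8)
The plan is to combine two facts already established in the excerpt: Corollary~\ref{cor:homeomorfismo importante}, which gives a homeomorphism $\mathcal{B}_{\otimes}(\otimes_{i=1}^{l}\mathbb{R}^{d_{i}})\cong\mathscr{L}_{\otimes}(\otimes_{i=1}^{l}\mathbb{R}^{d_{i}})\times\mathscr{E}_{\otimes}(\otimes_{i=1}^{l}\mathbb{R}^{d_{i}})$, and Corollary~\ref{cor:glsigma over osigma and ellipsoids}, which identifies $\mathscr{E}_{\otimes}(\otimes_{i=1}^{l}\mathbb{R}^{d_{i}})$ with $\mathbb{R}^{p}$ for $p=\frac{d_1(d_1+1)}{2}+\cdots+\frac{d_l(d_l+1)}{2}$. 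Substituting the second homeomorphism into the first immediately yields
\[
\mathcal{B}_{\otimes}(\otimes_{i=1}^{l}\mathbb{R}^{d_{i}})\;\cong\;\mathscr{L}_{\otimes}(\otimes_{i=1}^{l}\mathbb{R}^{d_{i}})\times\mathscr{E}_{\otimes}(\otimes_{i=1}^{l}\mathbb{R}^{d_{i}})\;\cong\;\mathscr{L}_{\otimes}(\otimes_{i=1}^{l}\mathbb{R}^{d_{i}})\times\mathbb{R}^{p},
\]
which is exactly the assertion of Corollary~\ref{cor: tensorial bodies and Rp}.

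Concretely, I would first invoke Corollary~\ref{cor:homeomorfismo importante}(2) to obtain the homeomorphism $\varphi\colon\mathcal{B}_{\otimes}\to\mathscr{L}_{\otimes}\times\mathscr{E}_{\otimes}$; the explicit form $\varphi(Q)=(r(Q),l_{\otimes}(Q))$ is available but not strictly needed for the statement. Next I would apply Corollary~\ref{cor:glsigma over osigma and ellipsoids} to fix a homeomorphism $h\colon\mathscr{E}_{\otimes}\to\mathbb{R}^{p}$. Finally, the product map $\mathrm{id}_{\mathscr{L}_{\otimes}}\times h\colon \mathscr{L}_{\otimes}\times\mathscr{E}_{\otimes}\to\mathscr{L}_{\otimes}\times\mathbb{R}^{p}$ is a homeomorphism (a product of homeomorphisms between Tychonoff spaces), and composing with $\varphi$ gives the desired homeomorphism $\mathcal{B}_{\otimes}\to\mathscr{L}_{\otimes}\times\mathbb{R}^{p}$.

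There is essentially no obstacle here: the corollary is a formal consequence of the two cited results, and the only thing to check is the (standard) fact that a finite product of homeomorphisms is a homeomorphism, together with matching the exponent $p$ in both places, which is the same expression. The substantive work lies entirely in the proofs of Corollaries~\ref{cor:homeomorfismo importante} and~\ref{cor:glsigma over osigma and ellipsoids}; once those are in hand, the present statement follows in one line. Accordingly I would keep the proof to a short remark chaining the two homeomorphisms, as the paper itself indicates by saying it "follows directly from the above and Corollary~\ref{cor:glsigma over osigma and ellipsoids}."
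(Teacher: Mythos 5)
Your proposal is correct and matches the paper's argument exactly: the paper also obtains the result by chaining the homeomorphism $\mathcal{B}_{\otimes}\cong\mathscr{L}_{\otimes}\times\mathscr{E}_{\otimes}$ from Corollary \ref{cor:homeomorfismo importante} with the identification $\mathscr{E}_{\otimes}\cong\mathbb{R}^{p}$ from Corollary \ref{cor:glsigma over osigma and ellipsoids}. Nothing is missing.
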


\appendix

\section{The Lie group structure of $GL_\otimes(\otimes_{i=1}^l\mathbb{R}^{d_i})$}
\label{sec: properties of GL tensor}

%
%


Proposition 3.11 of \cite{tensorialbodies} shows that $GL_\otimes(\otimes_{i=1}^l\mathbb{R}^{d_i})$ is a closed subgroup of  $GL(\otimes_{i=1}^l\mathbb{R}^{d_i})$, with respect to the topology induced by the operator norm on $\mathcal{L}(\otimes_{H,i=1}^l\mathbb{R}^{d_i}).$ Consequently, $GL_\otimes(\otimes_{i=1}^l\mathbb{R}^{d_i})$ is a Lie group and  $O_\otimes(\otimes_{i=1}^l\mathbb{R}^{d_i})$ is a compact subgroup of it.

Given a permutation $\sigma$ on $\{1,\ldots,l\}$ for which $x^{\sigma(1)}\otimes\cdots\otimes x^{\sigma(l)}\in\otimes_{i=1}^l\mathbb{R}^{d_i},$ whenever $x^i\in\mathbb{R}^{d_i},$ $i=1,\ldots,l,$ we define the map $U_\sigma$ in decomposable vectors as: 
\begin{equation}
\label{eq: orthogonal perm}
U_\sigma(x^{1}\otimes\cdots\otimes x^{l})=x^{\sigma(1)}\otimes\cdots\otimes x^{\sigma(l)},
\end{equation}
and extend it to $\otimes_{i=1}^l\mathbb{R}^{d_i}$ by multilinearity.  The map $U_{\sigma}$ is orthogonal. Indeed, notice that the canonical basis  $e_{k_i}^{d_i},$ $k_i=1,\ldots,d_i,$ of $\mathbb{R}^{d_i}$ is such that
 $U_{\sigma}(e_{k_1}^{d_1}\otimes\cdots\otimes e_{k_l}^{d_l})=e_{k_{\sigma(1)}}^{d_{\sigma(1)}}\otimes\cdots\otimes e_{k_{\sigma(l)}}^{d_{\sigma(l)}}.$ 
We will denote by $\mathcal{P}$ to the subset of $GL_\otimes(\otimes_{i=1}^l\mathbb{R}^{d_i})$ of orthogonal maps $U_\sigma$ as above. 

\begin{lem}
 The following hold:
\label{lem: finite group H}
\begin{enumerate}
\item $\mathcal{P}$ is a finite subgroup of $O_\otimes(\otimes_{i=1}^l\mathbb{R}^{d_i}).$
\item Every $T\in GL_\otimes(\otimes_{i=1}^l\mathbb{R}^{d_i})$ can be written as $T=(T_{1}\otimes\cdots\otimes T_{l})U_\sigma,$ for some $T_i\in GL(d_i),$ $i=1,\ldots,l,$ and $U_\sigma\in\mathcal{P}.$ 
\end{enumerate}
\end{lem}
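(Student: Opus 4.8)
The plan is to prove (1) by identifying $\mathcal{P}$ with a finite group of permutations, and (2) by reading the desired factorisation directly off formula (\ref{eq:tensor group}).

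For (1), I would first observe that the prescription (\ref{eq: orthogonal perm}) only yields a map $U_\sigma$ when $d_{\sigma(i)}=d_i$ for every $i$; the set of permutations $\sigma$ of $\{1,\dots,l\}$ with this property is closed under composition and inversion and contains the identity, hence is a finite group (a subgroup of the symmetric group of $\{1,\dots,l\}$). Evaluating on a decomposable vector one checks $U_\sigma U_\tau=U_{\tau\sigma}$, $U_{\mathrm{id}}=\mathrm{Id}$ and $U_\sigma^{-1}=U_{\sigma^{-1}}$; since decomposable vectors span $\otimes_{i=1}^l\mathbb{R}^{d_i}$ and all maps involved are linear, these identities hold on all of $\otimes_{i=1}^l\mathbb{R}^{d_i}$, so $\mathcal{P}$ is a group, and it is finite. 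To see $\mathcal{P}\subseteq O_\otimes(\otimes_{i=1}^l\mathbb{R}^{d_i})$, I would use the observation recorded just before the lemma: each $U_\sigma$ permutes the orthonormal basis $\{e_{k_1}^{d_1}\otimes\cdots\otimes e_{k_l}^{d_l}:1\le k_i\le d_i\}$ of the Hilbert tensor product $(\otimes_{i=1}^l\mathbb{R}^{d_i},\|\cdot\|_H)$, hence is orthogonal, and it visibly sends decomposable vectors to decomposable vectors, hence also lies in $GL_\otimes(\otimes_{i=1}^l\mathbb{R}^{d_i})$.

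For (2), I would apply (\ref{eq:tensor group}) to $T\in GL_\otimes(\otimes_{i=1}^l\mathbb{R}^{d_i})$: it furnishes a permutation $\sigma$ of $\{1,\dots,l\}$ and maps $T_i\in GL(d_i)$ with $T(x^1\otimes\cdots\otimes x^l)=T_1(x^{\sigma(1)})\otimes\cdots\otimes T_l(x^{\sigma(l)})$ for all $x^i\in\mathbb{R}^{d_i}$. For the right-hand side to be meaningful with $T_i\in GL(d_i)$ one needs $x^{\sigma(i)}\in\mathbb{R}^{d_i}$, i.e.\ $d_{\sigma(i)}=d_i$ for every $i$, so $\sigma$ lies in the group of part (1) and $U_\sigma\in\mathcal{P}$ is well defined. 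Then $(T_1\otimes\cdots\otimes T_l)U_\sigma$ carries $x^1\otimes\cdots\otimes x^l$ first to $x^{\sigma(1)}\otimes\cdots\otimes x^{\sigma(l)}$ and then to $T_1(x^{\sigma(1)})\otimes\cdots\otimes T_l(x^{\sigma(l)})=T(x^1\otimes\cdots\otimes x^l)$; as the two linear maps agree on the spanning set of decomposable vectors, $T=(T_1\otimes\cdots\otimes T_l)U_\sigma$, as claimed.

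The argument is essentially bookkeeping and I do not expect a genuine obstacle; the only point requiring a little care is that the permutation supplied by (\ref{eq:tensor group}) genuinely preserves the dimension pattern $(d_1,\dots,d_l)$ — so that $U_\sigma$ is an actual element of $\mathcal{P}$ and each $T_i$ belongs to $GL(d_i)$ — which is forced either by the typing constraint $T_i\in GL(d_i)$ already present in (\ref{eq:tensor group}), or equivalently by the invertibility of $T$: since $T_1\otimes\cdots\otimes T_l=T\,U_\sigma^{-1}$ must be a bijection, each $T_i$ is a bijection and hence $d_{\sigma(i)}=d_i$.
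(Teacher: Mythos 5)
Your proposal is correct and follows essentially the same route as the paper: the paper also verifies the group laws $U_\sigma^{-1}=U_{\sigma^{-1}}$, $U_\sigma U_\beta=U_{\beta\sigma}$ directly and notes orthogonality via the permutation of the canonical orthonormal basis, and for (2) it invokes the same structural result of Lim (stated in the paper as (\ref{eq:tensor group})) to write $T=(T_1\otimes\cdots\otimes T_l)U_\sigma$. The only difference is that you spell out the spanning-by-decomposable-vectors step and the check that $\sigma$ preserves the dimension pattern so that $U_\sigma\in\mathcal{P}$, details the paper leaves implicit.
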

\begin{proof}
(1) Clearly $\mathcal{P}$ is  a finite subset of $O_\otimes$ and the identity map on $\otimes_{i=1}^l\mathbb{R}^{d_i}$ belongs to it. 
To prove that it is a subgroup, let $U_\sigma, U_\beta\in \mathcal{P},$ then  it can be directly checked that $U_\sigma^{-1}=U_{\sigma^{-1}}$ and $U_\sigma U_\beta=U_{\beta\sigma}.$ 

(2) Let $T\in GL_\otimes$ then, by \cite[Corollary 2.1.4]{limcampocualquiera}, there exist a permutation  $\sigma$ on $\left\{ 1,...,l\right\}$ and $T_{i}\in GL(d_i),$ $i=1,\ldots,l,$ such that $T(x^{1}\otimes\cdots\otimes x^{l})=T_{1}(x^{\sigma(1)})\otimes\cdots\otimes T_{l}(x^{\sigma(l)}).$ Hence, $T=(T_{1}\otimes\cdots\otimes T_{l})U_\sigma$ as desired.
\end{proof}

\begin{prop}
\label{prop: maximal comp group}
$O_{\otimes}(\otimes_{i=1}^l\mathbb{R}^{d_i})$ is a maximal compact subgroup of $GL_{\otimes}(\otimes_{i=1}^l\mathbb{R}^{d_i}).$
\end{prop}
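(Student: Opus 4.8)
The plan is to use the classical structure theory for closed subgroups of $GL(n,\mathbb{R})$ together with the concrete description of $GL_\otimes$ coming from Lemma \ref{lem: finite group H}. Recall that a maximal compact subgroup of $GL(n,\mathbb{R})$ is $O(n)$, and more generally the relevant fact I would invoke is: in a Lie group $G$ with finitely many connected components, all maximal compact subgroups are conjugate, and any compact subgroup is contained in a maximal one (Cartan--Iwasawa--Malcev). So it suffices to check two things: first, that $O_\otimes$ is compact (already noted in the excerpt, since $O_\otimes = O(\otimes_{i=1}^l\mathbb{R}^{d_i})\cap GL_\otimes$ is closed and bounded); second, that $O_\otimes$ is \emph{maximal} among compact subgroups of $GL_\otimes$.

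For maximality I would argue by contradiction: suppose $K \subseteq GL_\otimes$ is a compact subgroup with $O_\otimes \subsetneq K$. The key step is to use the polar (Cartan) decomposition inside $GL(\otimes_{i=1}^l\mathbb{R}^{d_i})$ with respect to $\langle\cdot,\cdot\rangle_H$: every $T\in GL_\otimes$ factors uniquely as $T = UP$ with $U$ orthogonal and $P$ symmetric positive definite. I would first show that both factors lie in $GL_\otimes$. Using Lemma \ref{lem: finite group H}(2), write $T=(T_1\otimes\cdots\otimes T_l)U_\sigma$; since $U_\sigma\in O_\otimes$ it is enough to treat $S:=T_1\otimes\cdots\otimes T_l$, and for this one applies the polar decomposition $T_i = V_i P_i$ in each $GL(d_i)$, so that $S = (V_1\otimes\cdots\otimes V_l)(P_1\otimes\cdots\otimes P_l)$, where the first factor is in $O_\otimes$ and $P_1\otimes\cdots\otimes P_l$ is symmetric positive definite (hence, by uniqueness of polar decomposition in the ambient group, these are \emph{the} polar factors of $S$). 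Consequently the positive part $\mathcal{A}$ of $GL_\otimes$ — exactly the set $\mathcal{A}=\{S_1\otimes\cdots\otimes S_l : S_i\in GL(d_i)\text{ strictly positive}\}$ referred to in Lemma \ref{lem: split thm of Gl tensor} — meets every coset, and $GL_\otimes = O_\otimes\cdot\mathcal{A}$.

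Now if $K\supsetneq O_\otimes$, pick $T\in K\setminus O_\otimes$; after multiplying by an element of $O_\otimes\subseteq K$ we may assume $T = P \in \mathcal{A}$, $P\neq I$. Being symmetric positive definite with at least one eigenvalue $\neq 1$, the powers $P^n$ (for $n\to+\infty$ or $n\to-\infty$) have operator norm tending to infinity, so $\{P^n : n\in\mathbb{Z}\}$ is an unbounded subset of $K$, contradicting compactness of $K$. Hence no such $K$ exists and $O_\otimes$ is maximal compact. The main obstacle I anticipate is the bookkeeping in the first part of the previous paragraph: one must be careful that the polar decomposition is taken in the \emph{Hilbert} tensor product $(\otimes_{i=1}^l\mathbb{R}^{d_i},\|\cdot\|_H)$ and that the tensor product of positive maps is positive for $\langle\cdot,\cdot\rangle_H$ — this is where the definition of $\langle\cdot,\cdot\rangle_H$ on decomposable vectors is used — and also that $U_\sigma$ commutes appropriately or can be absorbed, so that the polar factors of a general $T\in GL_\otimes$ genuinely lie in $GL_\otimes$. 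Once that is in place, the rest is the standard ``an unbounded one-parameter group cannot sit in a compact group'' argument.
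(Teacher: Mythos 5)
Your proposal is correct and takes essentially the same route as the paper: decompose $T=(T_{1}\otimes\cdots\otimes T_{l})U_{\sigma}$ via Lemma \ref{lem: finite group H}, apply the polar decomposition factorwise so that the strictly positive part $S_{1}\otimes\cdots\otimes S_{l}$ lies in $K$ (after absorbing the orthogonal factors, which belong to $O_{\otimes}\subseteq K$), and use compactness of $K$ to force this positive part to be the identity. The only difference is cosmetic: you argue by contradiction via unboundedness of $\|P^{n}\|$, while the paper checks directly that boundedness of the powers on decomposable eigenvectors forces every eigenvalue product $\lambda_{1}\cdots\lambda_{l}$ to equal $1$.
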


\begin{proof}
Let $K\subset GL_{\otimes}$ be a compact subgroup such that $O_{\otimes}\subseteq K.$ We will prove that $K=O_{\otimes}.$ 

Suppose that $T\in K$ then, by Lemma \ref{lem: finite group H}, there exist $T_i\in GL(d_i),$ $i=1,\ldots,l,$ and $U_\sigma\in\mathcal{P}$ such that $T=T_{1}\otimes\cdots\otimes T_{l}U_\sigma.$ 
By  the polar decomposition (\cite[Theorem 60]{Kaplansky1974a}), each $T_i$ can be written as $T_i=S_iU_i,$ for some positive linear map $S_i\in GL(d_i)$ and $U_i\in O(d_i).$ Therefore, $T=S_{1}\otimes\cdots\otimes S_{l}U_{1}\otimes\cdots\otimes U_{l}U_\sigma$ and so $S_{1}\otimes\cdots\otimes S_{l}$ is a positive self-adjoint linear map in $K.$ 
Now, from the compactness of $K,$ for each eigenvalue $\lambda_i$ of $S_i$ with unitary eigenvector $z^i$ and each integer $n,$ the sequence $(S_{1}\otimes\cdots\otimes S_{l})^n(z^1\otimes\cdots\otimes z^l)=(\lambda_1\cdots\lambda_l)^nz^1\otimes\cdots\otimes z^l$ is bounded in $\otimes_{i=1}^l\mathbb{R}^{d_i}$. Since this is only possible if $\lambda_1\cdots\lambda_l=1,$ and this holds for every eigenvalue $\lambda_1\cdots\lambda_l$ of $S_{1}\otimes\cdots\otimes S_{l},$ we have that $S_{1}\otimes\cdots\otimes S_{l}$ is the identity on $\otimes_{i=1}^l\mathbb{R}^{d_i}.$  Thus, $T\in O_{\otimes}$ and $K\subseteq O_{\otimes}$ as required.
\end{proof}

Let us denote by $\otimes(GL(d_1),\ldots,GL(d_l))$ the set of tensor products  $T_{1}\otimes\cdots\otimes T_{l}$ of linear maps $T_i\in GL(d_i).$  Similarly, $\otimes(O(d_1),\ldots,O(d_l))$  denotes  the set of tensor products $U_{1}\otimes\cdots\otimes U_{l}$ of orthogonal maps  $U_i\in O(d_i).$ 
Below, we show that $\otimes(GL(d_1),\ldots,GL(d_l))$ and $\otimes(O(d_1),\ldots,O(d_l))$ are Lie groups of dimensions $d_1^2+\cdots+d_l^2-(l-1)$ and $\frac{d_1(d_1-1)}{2}+\cdots+\frac{d_1(d_1-1)}{2}-(l-1),$ respectively.

\begin{lem}
\label{lem:charact sigma GL}
Let $N\subset GL(d_1)\times\cdots\times GL(d_l)$ be defined as $N:=\{(\lambda_1I_{d_1},\ldots,\lambda_lI_{d_l}):\lambda_1\cdots\lambda_l=1\}.$
Then $N$ is a normal subgroup and:
\begin{enumerate}
\item $\otimes(GL(d_1),\ldots,GL(d_l))$ is isomorphic as a Lie group to $GL(d_1)\times\cdots\times GL(d_l)/N.$

\item $\otimes(O(d_1),\ldots,O(d_l))$ is isomorphic as a Lie group to $O(d_1)\times\cdots\times O(d_l)/N.$
\end{enumerate}
\end{lem}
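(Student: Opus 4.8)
The plan is to present $\otimes(GL(d_1),\ldots,GL(d_l))$ and $\otimes(O(d_1),\ldots,O(d_l))$ as the images of the obvious homomorphisms $\Phi(T_1,\ldots,T_l)=T_1\otimes\cdots\otimes T_l$ from $GL(d_1)\times\cdots\times GL(d_l)$ into $GL(\otimes_{i=1}^l\mathbb{R}^{d_i})$ and its restriction $\Psi$ to $O(d_1)\times\cdots\times O(d_l)$ (note that $U_1\otimes\cdots\otimes U_l$ is orthogonal for $\langle\cdot,\cdot\rangle_H$ whenever each $U_i$ is), and then to apply the first isomorphism theorem for Lie groups. That $N$ is a normal subgroup is immediate: its elements are tuples of scalar matrices, which are central in each $GL(d_i)$, so $N$ lies in the center of $GL(d_1)\times\cdots\times GL(d_l)$; it is clearly a closed subgroup. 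In part (2) the symbol $N$ must be read as $N\cap(O(d_1)\times\cdots\times O(d_l))=\{(\varepsilon_1 I_{d_1},\ldots,\varepsilon_l I_{d_l}):\varepsilon_i=\pm1,\ \varepsilon_1\cdots\varepsilon_l=1\}$, since the only scalar matrices in $O(d_i)$ are $\pm I_{d_i}$.

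First I would check that $\Phi$ (hence $\Psi$) is a smooth group homomorphism whose image is exactly $\otimes(GL(d_1),\ldots,GL(d_l))$ (resp.\ $\otimes(O(d_1),\ldots,O(d_l))$): multiplicativity is the identity $(T_1\otimes\cdots\otimes T_l)(S_1\otimes\cdots\otimes S_l)=(T_1S_1)\otimes\cdots\otimes(T_lS_l)$, verified on decomposable vectors; smoothness holds because the entries of $T_1\otimes\cdots\otimes T_l$ are polynomials in those of the $T_i$; and surjectivity onto the stated image is just the definition of that image. The heart of the matter is the computation $\ker\Phi=N$. If $T_1\otimes\cdots\otimes T_l$ is the identity, then $(T_1x^1)\otimes\cdots\otimes(T_lx^l)=x^1\otimes\cdots\otimes x^l$ for all $x^i$; fixing nonzero $x^2,\ldots,x^l$ and viewing this as an equality of nonzero rank-one tensors in $\mathbb{R}^{d_1}\otimes(\otimes_{i=2}^l\mathbb{R}^{d_i})$, the rigidity of decomposable tensors (from $a\otimes b=c\otimes d\neq0$ one gets $c=\mu a$ for a scalar $\mu\neq0$, by applying $\mathrm{id}\otimes\varphi$ for a suitable functional $\varphi$) shows $T_1x^1\in\mathbb{R}x^1$ for every $x^1\neq0$; since $d_1\geq2$, a linear map all of whose nonzero vectors are eigenvectors is a scalar multiple of the identity, so $T_1=\lambda_1 I_{d_1}$, and likewise $T_i=\lambda_i I_{d_i}$ for all $i$. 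Then $(\lambda_1\cdots\lambda_l)I=T_1\otimes\cdots\otimes T_l=I$ forces $\lambda_1\cdots\lambda_l=1$, i.e.\ $(T_1,\ldots,T_l)\in N$; the inclusion $N\subseteq\ker\Phi$ is clear. The identical computation for $\Psi$ additionally forces $\lambda_i=\pm1$, giving $\ker\Psi=N\cap(O(d_1)\times\cdots\times O(d_l))$.

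To finish I would observe that the image $\otimes(GL(d_1),\ldots,GL(d_l))$ is closed in $GL(\otimes_{i=1}^l\mathbb{R}^{d_i})$: under the canonical isomorphism $\mathcal{L}(\otimes_{i=1}^l\mathbb{R}^{d_i})\cong\otimes_{i=1}^l\mathcal{L}(\mathbb{R}^{d_i})$, the decomposable operators form the affine cone over a Segre variety, hence a closed algebraic set, and $A_1\otimes\cdots\otimes A_l$ is invertible precisely when every $A_i$ is (otherwise a nonzero decomposable vector lies in its kernel), so $\otimes(GL(d_1),\ldots,GL(d_l))$ is the intersection of that closed set with the open subset $GL(\otimes_{i=1}^l\mathbb{R}^{d_i})$; alternatively, by Lemma~\ref{lem: finite group H} it is a normal finite-index subgroup of the Lie group $GL_\otimes$, hence open and closed in it. For $\otimes(O(d_1),\ldots,O(d_l))$ closedness is immediate, it being the continuous image of the compact group $O(d_1)\times\cdots\times O(d_l)$. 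By Cartan's closed subgroup theorem these images are embedded Lie subgroups, so the first isomorphism theorem for Lie groups yields bijective smooth homomorphisms $(GL(d_1)\times\cdots\times GL(d_l))/N\to\otimes(GL(d_1),\ldots,GL(d_l))$ and $(O(d_1)\times\cdots\times O(d_l))/N\to\otimes(O(d_1),\ldots,O(d_l))$, and a bijective morphism of Lie groups is automatically an isomorphism. The only step with genuine content, and thus the main obstacle, is the kernel computation — the rigidity argument showing that $T_1\otimes\cdots\otimes T_l=I$ forces every $T_i$ to be scalar; everything else is bookkeeping with standard Lie-theoretic facts.
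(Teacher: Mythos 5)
Your proposal is correct and takes essentially the same route as the paper: both treat $\otimes(GL(d_1),\ldots,GL(d_l))$ and $\otimes(O(d_1),\ldots,O(d_l))$ as images of the canonical homomorphism $\Phi(T_1,\ldots,T_l)=T_1\otimes\cdots\otimes T_l$ and its restriction, establish closedness via the closedness of the set of decomposable operators, identify the kernel with $N$ (read, as you rightly note for part (2), as $N\cap(O(d_1)\times\cdots\times O(d_l))$), and conclude by the isomorphism theorem for Lie groups. Your explicit rigidity argument for the kernel fills in a step the paper only asserts; the one shaky point is the parenthetical claim that a finite-index subgroup of $GL_\otimes$ is automatically open and closed (false for topological groups in general without more input), but this aside is not needed since your Segre-cone argument, which matches the paper's, already yields closedness.
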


\begin{proof}
We begin by proving that both $\otimes(GL(d_1),\ldots,GL(d_l))$ and $\otimes(O(d_1),\ldots,O(d_l))$ are closed subgroups of $GL_\otimes(\otimes_{i=1}^l\mathbb{R}^{d_i})$ and, in consequence, they are Lie groups.  From the properties of the tensor product of linear maps, it follows easily that both of them are subgroups. The closedness follows from two facts. First, they are subsets of the set of decomposable vectors in $\mathcal{L}(\mathbb{R}^{d_1},\mathbb{R}^{d_1})\otimes\cdots\otimes \mathcal{L}(\mathbb{R}^{d_l},\mathbb{R}^{d_l}),$ which  is closed with respect to any norm topology on the tensor space (see \cite[Proposition 4.2]{DeSilva2008}). Second,  $GL_\otimes$ is closed in $GL(\otimes_{i=1}^l\mathbb{R}^{d_i}),$ see \cite[Proposition 3.11]{tensorialbodies}. 

We now construct the desired isomorphisms. Let $\Phi:GL(d_1)\times\cdots\times GL(d_l)\rightarrow\otimes(GL(d_1),\ldots,GL(d_l))$ be the map sending each tuple $(T_1,\ldots,T_l)$ to its tensor product $T_1\otimes\cdots\otimes T_l.$ Also, let $\Phi_|$ the restriction of $\Phi$ to $O(d_1)\times\cdots\times O(d_l).$  Clearly, $\Phi$ and $\Phi_|$ are surjective maps. Indeed, it is not difficult to prove that they are smooth homomorphisms with Kernel $N.$ Thus, by \cite[Theorem 11.1.8]{HilgertNeeb}, $\Phi$ and $\Phi_|$ induce isomorphisms of Lie groups between $GL(d_1)\times\cdots\times GL(d_l)/N$ and $\otimes(GL(d_1),\ldots,GL(d_l)),$ and $O(d_1)\times\cdots\times O(d_l)/N$ and $\otimes(O(d_1),\ldots,O(d_l)),$ respectively.
\end{proof}

Below, we describe the structure of the groups $GL_\otimes(\otimes_{i=1}^l\mathbb{R}^{d_i})$ and $O_{\otimes}(\otimes_{i=1}^l\mathbb{R}^{d_i}).$ The case of tensor products of two spaces, i.e. $l=2,$  was already established in  \cite[Proposition A.1]{Mettler}.
\begin{prop}
\label{prop: charact of GL tensor}
$GL_\otimes(\otimes_{i=1}^l\mathbb{R}^{d_i})$ and $O_{\otimes}(\otimes_{i=1}^l\mathbb{R}^{d_i})$ are isomorphic (as Lie groups) to the semidirect product $\otimes(GL(d_1),\ldots,GL(d_l))\rtimes \mathcal{P}$ 
and $\otimes(O(d_1),\ldots,O(d_l))\rtimes \mathcal{P},$ respectively.
\end{prop}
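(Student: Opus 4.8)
The plan is to combine the previous lemmas into a clean description of the group structure. By Lemma \ref{lem: finite group H}, every $T\in GL_\otimes(\otimes_{i=1}^l\mathbb{R}^{d_i})$ factors as $T=(T_1\otimes\cdots\otimes T_l)U_\sigma$ with $T_i\in GL(d_i)$ and $U_\sigma\in\mathcal{P}$; moreover this factorization is unique, since a permutation map $U_\sigma$ that is also of the form $T_1\otimes\cdots\otimes T_l$ must fix the decomposition of $\otimes_{i=1}^l\mathbb{R}^{d_i}$ into tensor factors (or rather, permute them trivially), forcing $\sigma=\mathrm{id}$ — here one uses $d_i\geq 2$ so the factors are genuinely nontrivial. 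Thus as a set $GL_\otimes=\otimes(GL(d_1),\ldots,GL(d_l))\cdot\mathcal{P}$ with $\otimes(GL(d_1),\ldots,GL(d_l))\cap\mathcal{P}=\{I\}$. The same argument restricted to orthogonal maps gives $O_\otimes=\otimes(O(d_1),\ldots,O(d_l))\cdot\mathcal{P}$ with trivial intersection.

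Next I would verify that $\otimes(GL(d_1),\ldots,GL(d_l))$ is normal in $GL_\otimes$. This is a direct computation: conjugating $T_1\otimes\cdots\otimes T_l$ by $U_\sigma$ permutes the tensor legs, so $U_\sigma(T_1\otimes\cdots\otimes T_l)U_\sigma^{-1}=T_{\sigma^{-1}(1)}\otimes\cdots\otimes T_{\sigma^{-1}(l)}$ whenever this expression makes sense (i.e. when $\sigma$ permutes only legs of equal dimension, which is exactly the condition under which $U_\sigma\in\mathcal{P}$). Hence $\otimes(GL(d_1),\ldots,GL(d_l))$ is invariant under conjugation by $\mathcal{P}$, and being obviously invariant under conjugation by its own elements, it is normal in the whole group. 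The analogous statement for $\otimes(O(d_1),\ldots,O(d_l))$ inside $O_\otimes$ follows identically.

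With a normal subgroup $\otimes(GL(d_1),\ldots,GL(d_l))$, a complementary subgroup $\mathcal{P}$, and trivial intersection, the standard recognition theorem for semidirect products (see e.g. \cite{HilgertNeeb}) gives $GL_\otimes\cong\otimes(GL(d_1),\ldots,GL(d_l))\rtimes\mathcal{P}$ as abstract groups, with $\mathcal{P}$ acting by the leg-permutation conjugation described above; likewise $O_\otimes\cong\otimes(O(d_1),\ldots,O(d_l))\rtimes\mathcal{P}$. To upgrade this to an isomorphism of Lie groups I would note that the multiplication map $\otimes(GL(d_1),\ldots,GL(d_l))\times\mathcal{P}\to GL_\otimes$, $(A,U_\sigma)\mapsto AU_\sigma$, is smooth and, since $\mathcal{P}$ is finite (Lemma \ref{lem: finite group H}(1)), it is a bijective local diffeomorphism onto each connected component of $GL_\otimes$; being a smooth bijective homomorphism between Lie groups, it is an isomorphism of Lie groups. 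The same applies verbatim in the orthogonal case.

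The main obstacle I anticipate is establishing the uniqueness of the factorization $T=(T_1\otimes\cdots\otimes T_l)U_\sigma$, equivalently that $\mathcal{P}\cap\otimes(GL(d_1),\ldots,GL(d_l))=\{I\}$: one must argue carefully that a nontrivial leg-permutation $U_\sigma$ cannot be written as a tensor product of single-factor maps. This is where the standing hypothesis $d_i\geq 2$ is essential — if some $d_i=1$ the corresponding leg is inert and permutations involving it become invisible, so the statement as phrased would need adjustment. Everything else (normality, the semidirect product recognition, and the passage from abstract to Lie group isomorphism via finiteness of $\mathcal{P}$) is routine.
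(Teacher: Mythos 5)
Your proposal is correct and follows essentially the same route as the paper: the factorization $T=(T_1\otimes\cdots\otimes T_l)U_\sigma$ from Lemma \ref{lem: finite group H}, normality of $\otimes(GL(d_1),\ldots,GL(d_l))$ via the conjugation computation, and recognition of the semidirect product (your indexing $T_{\sigma^{-1}(i)}$ versus the paper's $T_{\sigma(i)}$ is just a composition-convention difference and harmless). The only divergences are cosmetic: you justify the trivial intersection $\mathcal{P}\cap\otimes(GL(d_1),\ldots,GL(d_l))=\{I\}$ explicitly (the paper asserts it), and you conclude by showing the multiplication map is a smooth bijective homomorphism, piecewise a diffeomorphism since $\mathcal{P}$ is finite, where the paper instead checks closedness and finiteness of the set of connected components and cites \cite[Proposition 11.1.18]{HilgertNeeb}.
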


\begin{proof}
First observe that when only different integers are considered, \textit{i.e.} $d_i\neq d_j,$ for $i\neq j,$ $i,j=1,\ldots,l,$ then, by \cite[Corollary 2.1.4]{limcampocualquiera}, $GL_\otimes(\otimes_{i=1}^l\mathbb{R}^{d_i})=\otimes(GL(d_1),\ldots,GL(d_l)),$ $O_\otimes(\otimes_{i=1}^l\mathbb{R}^{d_i})=\otimes(O(d_1),\ldots,O(d_l))$ and $\mathcal{P}=\{I_{\otimes_{i=1}^{l}\mathbb{R}^{d_i}}\}.$ So in this case, the result is straightforward.

In order to prove the general case, notice that from (2) in Lemma \ref{lem: finite group H}, $GL_\otimes=\otimes(GL(d_1),\ldots,GL(d_l))\mathcal{P}$ and $\otimes(GL(d_1),\ldots,GL(d_l))\cap \mathcal{P}$ is trivial. Therefore, we only need to check that $\otimes(GL(d_1),\ldots,GL(d_l))$ is a closed normal subgroup of $GL_\otimes$, and that $GL_\otimes$  has a finite number of connected components. The first part of the result then follows from \cite[Proposition 11.1.18]{HilgertNeeb}.

By Lemma \ref{lem:charact sigma GL}, $\otimes(GL(d_1),\ldots,GL(d_l))$ is a closed subgroup.
 To prove that it is a normal subgroup, it is enough to show that $U_\sigma(T_{1}\otimes\cdots\otimes T_{l})U_\sigma^{-1}\in\otimes(GL(d_1),\ldots,GL(d_l))$ for any $T_i\in GL(d_i)$ and $U_\sigma\in \mathcal{P}.$ 
Let $x^i\in\mathbb{R}^{d_i},$ $i=1,\ldots,l,$ then
\begin{align}
\nonumber U_\sigma(T_{1}\otimes\cdots\otimes T_{l})U_\sigma^{-1}(x^{1}\otimes\cdots\otimes x^{l}) & =U_\sigma T_{1}(x^{\sigma^{-1}(1)})\otimes\cdots\otimes T_{l}(x^{\sigma^{-1}(l)})\\=T_{\sigma(1)}(x^1)\otimes\cdots\otimes T_{\sigma(l)}(x^l)
&=T_{\sigma(1)}\otimes\cdots\otimes T_{\sigma(l)}(x^1\otimes\cdots\otimes x^l).\label{eq:normal subgroup}
\end{align}
Thus, $U_\sigma(T_{1}\otimes\cdots\otimes T_{l})U_\sigma^{-1}=T_{\sigma(1)}\otimes\cdots\otimes T_{\sigma(l)}$ is a linear map on $\otimes(GL(d_1),\ldots,GL(d_l)),$ as required.

To prove that $GL_\otimes$ has a finite number of connected components, note that by Lemma \ref{lem: finite group H}, the composition induces a continuous surjective map from $\otimes(GL(d_1),\ldots,GL(d_l))\times \mathcal{P}$ onto $GL_\otimes.$ 
Hence, since both $\otimes(GL(d_1),\ldots,GL(d_l))$ and $\mathcal{P}$ have a finite number of connected components, the same holds for $GL_\otimes$.

It remains to show the assertion for $O_\otimes.$ By Lemma \ref{lem:charact sigma GL}, $\otimes(O(d_1),\ldots,O(d_l))$ is closed. Also, since (\ref{eq:normal subgroup}) is valid for $U_i\in O(d_i),$ $i=1,\ldots,l,$ then it is a closed normal subgroup of $O_\otimes.$ In addition, by Lemma \ref{lem: finite group H}, the composition map also gives us a surjective continuous map from $\otimes(O(d_1),\ldots,O(d_l))\times \mathcal{P}$ onto $O_\otimes.$ From this $O_\otimes$ has a finite number of connected components, and the result follows from \cite[Proposition 11.1.18]{HilgertNeeb}.
\end{proof}


\begin{cor}
\label{cor: structure GL/O tensor}
$GL_\otimes(\otimes_{i=1}^l\mathbb{R}^{d_i})/O_{\otimes}(\otimes_{i=1}^l\mathbb{R}^{d_i})$ is homeomorphic to  $\mathbb{R}^p$ with $p=\frac{d_1(d_1+1)}{2}+\cdots+\frac{d_l(d_l+1)}{2}.$
\end{cor}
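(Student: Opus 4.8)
The plan is to reduce the statement to a product of classical facts via the structural decomposition of $GL_\otimes$ and $O_\otimes$ obtained in Proposition \ref{prop: charact of GL tensor}. By that proposition, $GL_\otimes(\otimes_{i=1}^l\mathbb{R}^{d_i})\cong\otimes(GL(d_1),\ldots,GL(d_l))\rtimes\mathcal{P}$ and $O_\otimes(\otimes_{i=1}^l\mathbb{R}^{d_i})\cong\otimes(O(d_1),\ldots,O(d_l))\rtimes\mathcal{P}$ with the \emph{same} finite group $\mathcal{P}$ acting in the \emph{same} way (by permuting tensor factors) in both cases. Since $\mathcal{P}$ is a common subgroup sitting on the right of both semidirect products, the quotient should collapse it: I would argue that
\[
GL_\otimes/O_\otimes\;\cong\;\bigl(\otimes(GL(d_1),\ldots,GL(d_l))\rtimes\mathcal{P}\bigr)\big/\bigl(\otimes(O(d_1),\ldots,O(d_l))\rtimes\mathcal{P}\bigr)\;\cong\;\otimes(GL(d_1),\ldots,GL(d_l))\big/\otimes(O(d_1),\ldots,O(d_l)),
\]
the last step being the standard fact that if $H=N\rtimes F$ and $K=M\rtimes F$ with $M\subseteq N$ normal in $N$ (indeed $M\trianglelefteq H$ here), then $H/K\cong N/M$ as homogeneous spaces; one writes each coset $(n,f)K$ uniquely as $(nM)$ and checks the map is a well-defined homeomorphism using that the quotient map is open.

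Next I would pass to the tilde-quotients of Lemma \ref{lem:charact sigma GL}. By that lemma, $\otimes(GL(d_1),\ldots,GL(d_l))\cong(GL(d_1)\times\cdots\times GL(d_l))/N$ and $\otimes(O(d_1),\ldots,O(d_l))\cong(O(d_1)\times\cdots\times O(d_l))/N$, with $N=\{(\lambda_1 I,\ldots,\lambda_l I):\prod\lambda_i=1\}$ the same normal subgroup in both. Therefore
\[
\otimes(GL(d_1),\ldots,GL(d_l))\big/\otimes(O(d_1),\ldots,O(d_l))\;\cong\;\bigl((GL(d_1)\times\cdots\times GL(d_l))/N\bigr)\big/\bigl((O(d_1)\times\cdots\times O(d_l))/N\bigr)\;\cong\;(GL(d_1)\times\cdots\times GL(d_l))\big/(O(d_1)\times\cdots\times O(d_l)),
\]
again by the elementary "quotient of quotients" identity, since $N\subseteq O(d_1)\times\cdots\times O(d_l)$. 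The right-hand side is obviously homeomorphic to the product $\prod_{i=1}^l \bigl(GL(d_i)/O(d_i)\bigr)$.

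Finally I would invoke the classical polar decomposition $GL(d_i)=S_i\cdot O(d_i)$ (already used in the proof of Proposition \ref{prop: maximal comp group} via \cite[Theorem 60]{Kaplansky1974a}): the map $O(d_i)\mapsto$ positive-definite part gives a homeomorphism $GL(d_i)/O(d_i)\cong \mathrm{Sym}^+(d_i)$, the cone of positive-definite symmetric $d_i\times d_i$ matrices, which is an open convex cone in the $\tfrac{d_i(d_i+1)}{2}$-dimensional space $\mathrm{Sym}(d_i)$ and hence diffeomorphic to $\mathbb{R}^{d_i(d_i+1)/2}$ (star-shaped about $I$, or via the matrix exponential/logarithm). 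Taking the product over $i=1,\ldots,l$ yields $GL_\otimes/O_\otimes\cong\prod_{i=1}^l\mathbb{R}^{d_i(d_i+1)/2}\cong\mathbb{R}^p$ with $p=\sum_{i=1}^l\tfrac{d_i(d_i+1)}{2}$, as claimed.

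The main obstacle is purely bookkeeping rather than conceptual: one must verify carefully that the two "quotient of semidirect products" and "quotient of quotients" reductions are valid \emph{homeomorphisms} (not just bijections or group isomorphisms), which requires checking that the relevant quotient maps are open — this holds since all groups involved are Lie groups and $N$, $\mathcal{P}$, $O(d_i)$ are closed subgroups, so the quotient maps are open, and a continuous open bijection is a homeomorphism. One must also confirm that the $\mathcal{P}$-actions and the $N$-subgroups genuinely match up on the $GL$ and $O$ sides so that the cancellations are legitimate; this is immediate from the explicit descriptions in Proposition \ref{prop: charact of GL tensor} and Lemma \ref{lem:charact sigma GL}, where the permutation action and $N$ are literally the same in both settings. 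Everything else is standard Lie-theoretic manipulation plus the polar decomposition.
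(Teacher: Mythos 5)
Your route is genuinely different from the paper's proof, which does not decompose the quotient at all: it invokes the maximality of $O_\otimes$ (Proposition \ref{prop: maximal comp group}) together with the finiteness of the number of connected components of $GL_\otimes$ (Proposition \ref{prop: charact of GL tensor}) and the general structure theorem \cite[Theorem 32.5]{Stroppel}, and only afterwards reads off the exponent from Lemma \ref{lem:charact sigma GL} and Proposition \ref{prop: charact of GL tensor}. Within your route, the first reduction is sound as an identification of coset spaces: one has $GL_\otimes=\otimes(GL(d_1),\ldots,GL(d_l))\cdot O_\otimes$ and $\otimes(GL(d_1),\ldots,GL(d_l))\cap O_\otimes=\otimes(O(d_1),\ldots,O(d_l))$ (this can be checked directly from (\ref{eq:tensor group})), so transitivity of the $\otimes(GL(d_1),\ldots,GL(d_l))$-action on $GL_\otimes/O_\otimes$ gives the desired homeomorphism; your parenthetical claim that $\otimes(O(d_1),\ldots,O(d_l))$ is normal in $GL_\otimes$ is false (conjugating $U_1\otimes\cdots\otimes U_l$ by $T_1\otimes\cdots\otimes T_l$ yields $T_1U_1T_1^{-1}\otimes\cdots\otimes T_lU_lT_l^{-1}$, which is in general not orthogonal), but normality is not needed there. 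Likewise the final step $GL(d_i)/O(d_i)\cong\mathrm{Sym}^+(d_i)\cong\mathbb{R}^{d_i(d_i+1)/2}$ is standard and correct.

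The genuine gap is the quotient-of-quotients step. The inclusion $N\subseteq O(d_1)\times\cdots\times O(d_l)$ that you invoke is false: $N$ consists of all tuples $(\lambda_1I_{d_1},\ldots,\lambda_lI_{d_l})$ with $\lambda_1\cdots\lambda_l=1$, for instance $(2I_{d_1},\tfrac12I_{d_2},I_{d_3},\ldots,I_{d_l})$, and such tuples are not orthogonal. Hence the identity $(G/N)\big/(K/N)\cong G/K$ with $G=GL(d_1)\times\cdots\times GL(d_l)$ and $K=O(d_1)\times\cdots\times O(d_l)$ does not apply. What is true is that $\otimes(O(d_1),\ldots,O(d_l))=\Phi(K)$ and $\Phi^{-1}(\Phi(K))=KN$, so your chain actually identifies $GL_\otimes/O_\otimes$ with $G/(KN)$, not with $G/K$; since the kernel of $\Phi$ restricted to $K$ is the finite group $N\cap K$ rather than $N$, the subgroup $KN$ has dimension $\sum_i\tfrac{d_i(d_i-1)}{2}+(l-1)$ and $G/(KN)$ has dimension $\sum_i\tfrac{d_i(d_i+1)}{2}-(l-1)$. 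The same count follows from Lemma \ref{lem: split thm of Gl tensor}: $GL_\otimes/O_\otimes\cong\mathcal{A}$, and $\mathcal{A}$ is parametrized by tuples of strictly positive maps modulo reciprocal positive rescalings. Concretely, for $l=2$ and $d_1=d_2=2$ one gets $\dim GL_\otimes=7$ and $\dim O_\otimes=2$, so the quotient is $5$-dimensional, whereas $p=6$; thus the repaired version of your argument produces $\mathbb{R}^{p-(l-1)}$ rather than $\mathbb{R}^p$. Note that the inclusion you used is also implicit in the kernel statement of part (2) of Lemma \ref{lem:charact sigma GL}, on which the stated value of $p$ rests, so this is not mere bookkeeping: you should recompute the exponent directly from the polar decomposition and address this tension explicitly rather than rely on that identification.
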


\begin{proof}
By Proposition \ref{prop: charact of GL tensor}, $GL_{\otimes}$ has a finite number of connected components. Consequently, its
quotient by  $O_{\otimes}$, which is a maximal compact subgroup (Proposition \ref{prop: maximal comp group}), must be homeomorphic to $\mathbb{R}^p,$  \cite[Theorem 32.5]{Stroppel}. Therefore, from Lemma \ref{lem:charact sigma GL} and Proposition \ref{prop: charact of GL tensor}, we have $p=\frac{d_1(d_1+1)}{2}+\cdots+\frac{d_l(d_l+1)}{2}.$
\end{proof}

The next lemma exhibits the polar decomposition of any linear isomorphism in $GL_{\otimes}.$ It shows that every $T\in GL_\otimes$ can be written uniquely as $T=SU$ where $U$ is an orthogonal map in $O_{\otimes}$ and $S$ is the tensor product of strictly positive linear maps. 
Recall that a linear map $S:\mathbb{E}\rightarrow\mathbb{E},$ on a Euclidean space $\mathbb{E},$ is strictly positive if it is self-adjoint and  $\left\langle x,Tx\right\rangle_{\mathbb{E}} >0$ for every non-zero $x\in\mathbb{E}$.

\begin{lem}
\label{lem: split thm of Gl tensor}
Let $\mathcal{A}$ consists of tensor products $S_{1}\otimes\cdots\otimes S_{l}$ of strictly positive linear maps $S_{i}\in GL(d_i),$ $i=1,\ldots,l$. Then:
\begin{enumerate} 
\item $\mathcal{A}\subset GL_{\otimes}(\otimes_{i=1}^{l}\mathbb{R}^{d_{i}})$ is  closed.
\item The map $\Psi:\mathcal{A}\times O_{\otimes}(\otimes_{i=1}^{l}\mathbb{R}^{d_{i}})\rightarrow GL_{\otimes}(\otimes_{i=1}^{l}\mathbb{R}^{d_{i}})$ sending the pair $(S,U)$ to $SU$
is a homeomorphism.
\end{enumerate}
\end{lem}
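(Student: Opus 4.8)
The plan is to establish the two assertions about $\mathcal{A}$ via the polar decomposition, handling closedness first and then the homeomorphism.

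For part (1), I would argue that $\mathcal{A}$ is closed in $GL_\otimes(\otimes_{i=1}^l\mathbb{R}^{d_i})$ by exhibiting it as an intersection of two closed sets. First, the set of tensor products $S_1\otimes\cdots\otimes S_l$ of \emph{arbitrary} invertible maps $S_i\in GL(d_i)$ is exactly $\otimes(GL(d_1),\ldots,GL(d_l))$, which is closed in $GL_\otimes$ by Lemma \ref{lem:charact sigma GL} (or, more directly, by the fact that the decomposable vectors in $\mathcal{L}(\mathbb{R}^{d_1})\otimes\cdots\otimes\mathcal{L}(\mathbb{R}^{d_l})$ form a closed set, together with \cite[Proposition 3.11]{tensorialbodies}). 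Second, the self-adjoint positive-definite elements form a closed subset of $GL(\otimes_{i=1}^l\mathbb{R}^{d_i})$. The key observation is that $S_1\otimes\cdots\otimes S_l$ is self-adjoint strictly positive on $(\otimes_{i=1}^l\mathbb{R}^{d_i},\langle\cdot,\cdot\rangle_H)$ if and only if \emph{each} $S_i$ can be chosen self-adjoint strictly positive (the eigenvalues of the tensor product being products of eigenvalues of the factors, and the representation being unique up to the scalars cancelling in $N$); hence $\mathcal{A}$ is precisely the intersection of $\otimes(GL(d_1),\ldots,GL(d_l))$ with the positive-definite cone, which is closed.

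For part (2), I would first check that $\Psi$ is well-defined (it is just composition of maps, landing in $GL_\otimes$ since both factors lie there) and continuous (composition is continuous in the operator norm). Surjectivity and injectivity come from the classical polar decomposition: given $T\in GL_\otimes$, write $T=(T_1\otimes\cdots\otimes T_l)U_\sigma$ by Lemma \ref{lem: finite group H}(2), then polar-decompose each $T_i=S_iU_i$ with $S_i$ strictly positive and $U_i\in O(d_i)$, obtaining $T=(S_1\otimes\cdots\otimes S_l)(U_1\otimes\cdots\otimes U_l)U_\sigma$; the first factor lies in $\mathcal{A}$ and the rest lies in $O_\otimes$, giving surjectivity. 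For injectivity, if $S U = S' U'$ with $S,S'\in\mathcal{A}$ and $U,U'\in O_\otimes$, then $S^{-1}S' = U (U')^{-1}$ is simultaneously in $\mathcal{A}$ (product of tensor products of positive maps need not be positive in general, but here it equals an orthogonal map, and the only self-adjoint-positive orthogonal map is the identity after passing to the polar form of $S^{-1}S'$) — more carefully, $(S')^{-1}S$ is self-adjoint positive \emph{and} orthogonal once we invoke uniqueness of the polar decomposition of the element $SU=S'U'$ of $GL(\otimes_{i=1}^l\mathbb{R}^{d_i})$, forcing $S=S'$ and $U=U'$. Finally, to upgrade the continuous bijection $\Psi$ to a homeomorphism, I would exhibit a continuous inverse: the polar decomposition map $T\mapsto\big((TT^*)^{1/2},\,(TT^*)^{-1/2}T\big)$ on $GL(\otimes_{i=1}^l\mathbb{R}^{d_i})$ is continuous (the positive square root depends continuously on a positive operator), and it restricts to $\Psi^{-1}$ on $GL_\otimes$; one just needs to know that the $S$-part of a $T\in GL_\otimes$ actually lands in $\mathcal{A}$, which is the content of part (1) combined with the computation above.

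The main obstacle I anticipate is the bookkeeping in showing that the positive part $(TT^*)^{1/2}$ of an element $T=(S_1\otimes\cdots\otimes S_l)(U_1\otimes\cdots\otimes U_l)U_\sigma$ of $GL_\otimes$ is genuinely of the form $S_1'\otimes\cdots\otimes S_l'$ with each $S_i'$ strictly positive — i.e., that $\mathcal{A}$ is exactly the set of positive elements of $\otimes(GL(d_1),\ldots,GL(d_l))$ and is stable under the polar projection. This requires noting that $TT^* = (S_1 S_1^*)\otimes\cdots\otimes(S_l S_l^*)$ when $\sigma$ is handled correctly (using that $U_\sigma$ is orthogonal and permutes the factors), so that its unique positive square root is $(S_1S_1^*)^{1/2}\otimes\cdots\otimes(S_lS_l^*)^{1/2}$ by uniqueness of positive square roots and multiplicativity of functional calculus across tensor factors; everything else is routine.
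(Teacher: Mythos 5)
Your proposal is correct and its core coincides with the paper's argument: both parts rest on writing $T\in GL_{\otimes}$ as $(T_{1}\otimes\cdots\otimes T_{l})U_{\sigma}$ via Lemma \ref{lem: finite group H}, polar-decomposing each factor $T_{i}=S_{i}U_{i}$, and invoking uniqueness of the polar decomposition in $GL(\otimes_{i=1}^{l}\mathbb{R}^{d_{i}})$ to get existence and uniqueness of the factorization $T=SU$ with $S\in\mathcal{A}$, $U\in O_{\otimes}$. The one place you genuinely diverge is the continuity of $\Psi^{-1}$: you exhibit an explicit continuous inverse $T\mapsto\bigl((TT^{*})^{1/2},(TT^{*})^{-1/2}T\bigr)$, using continuity of the positive square root and the computation $TT^{*}=S_{1}^{2}\otimes\cdots\otimes S_{l}^{2}$, whereas the paper argues more softly that if $T_{n}=S_{n}U_{n}\to T=SU$ then compactness of $O_{\otimes}$ and closedness of $\mathcal{A}$ force $U_{n}\to U$ and $S_{n}\to S$; your route is more explicit and self-contained, the paper's avoids any functional-calculus input. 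A small caveat on part (1): your parenthetical justification that a self-adjoint strictly positive element of $\otimes(GL(d_{1}),\ldots,GL(d_{l}))$ has all factors expressible as strictly positive maps (``eigenvalues multiply, uniqueness up to scalars in $N$'') is only a sketch as stated --- one must rule out, e.g., sign flips and skew-adjoint factors, say by testing positivity on decomposable vectors or, as the paper does and as you effectively do later, by polar-decomposing each factor and cancelling the orthogonal part via uniqueness; since your $TT^{*}$ computation supplies exactly this, the gap is cosmetic rather than substantive.
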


\begin{proof}
(1).
To show that $\mathcal{A}$ is closed, let $S_{1,n}\otimes\cdots\otimes S_{l,n}$  be a sequence in $\mathcal{A}$ converging to $S\in GL_{\otimes}.$ Since  $\otimes(GL(d_1),\ldots,GL(d_l))$ is closed (Proposition \ref{lem:charact sigma GL}), and positive self-adjoint linear maps are stable under taking limits, then $S$ is a positive linear isomorphism (i.e. strictly positive). Moreover, it is of the form $S=T_1\otimes\cdots\otimes T_l,$ for some $T_i\in GL(d_i),$ $i=1,\ldots,l.$  Now, if $T_{i}=S_{i}U_{i}$ is the polar decomposition of $T_{i}$
(i.e. $S_{i}$ is strictly positive and $U_{i}$
is orthogonal), then $S=(S_{1}\otimes\cdots\otimes S_{l})(U_{1}\otimes\cdots\otimes U_{l}).$ So, by the uniqueness of the polar decomposition (\cite[Theorem 60]{Kaplansky1974a}), $U_{1}\otimes\cdots\otimes U_{l}$ must be the identity on $\otimes_{i=1}^{l}\mathbb{R}^{d_{i}}.$ This shows that $S\in\mathcal{A}$ as desired.


(2). To show that $\Psi$ is bijective, let $T\in GL_\otimes$ then, by Lemma \ref{lem: finite group H}, $T=T_{1}\otimes\cdots\otimes T_{l}U_\sigma$ for some $T_{i}\in GL(d_i)$ and $U_\sigma\in\mathcal{P}.$ Also, as a consequence of the above argument, $T$ can be written as $T=(S_{1}\otimes\cdots\otimes S_{l})(U_{1}\otimes\cdots\otimes U_{l})U_{\sigma}$ and so $T=SU,$ for $S=S_{1}\otimes\cdots\otimes S_{l}\in\mathcal{A}$ and $U=(U_{1}\otimes\cdots\otimes U_{l})U_{\sigma}\in O_\otimes.$ 
Indeed, since the polar decomposition of linear isomorphims is unique (\cite[Theorem 60]{Kaplansky1974a}) then so are $S$, $U$. Hence $\Psi$ is bijective. Its continiuty follows by definition.
It reminds to show that $\Psi^{-1}$ is continuous. Let $T_{n}=\Psi(S_{n},U_{n})$ be a sequence converging to $T=\Psi(S,U)$. From the compactness of $O_{\otimes}$  and the fact that $\mathcal{A}$ is closed, it follows that $U_{n}$ converges to $U$ and $S_n$ to $S$. This shows that $\Psi^{-1}$ is continuous as required.
\end{proof}


\begin{thebibliography}{10}

\bibitem{Abels}
{H.} {A}bels.
\newblock Parallelizability of proper actions, global {$K$}-slices and maximal
  compact subgroups.
\newblock {\em Math. Ann.}, 212:1--19, 1974.

\bibitem{AgeevRepovs1}
{S}. {A}geev and {D}. {R}epov\v{s}.
\newblock On {B}anach-{M}azur compacta.
\newblock {\em J. Austral. Math. Soc. Ser. A}, 69(3):316--335, 2000.

\bibitem{banacamazurcompactum}
{S}. {A}ntonyan.
\newblock {T}he topology of the {B}anach-{M}azur compactum.
\newblock {\em {F}undamenta {M}athematicae}, 166(3):209--232, 2000.

\bibitem{antonyananerviowest}
{S}. {A}ntonyan.
\newblock West's problem on equivariant hyperspaces and {B}anach-{M}azur
  compacta.
\newblock {\em Trans. Amer. Math. Soc.}, 355(8):3379--3404, 2003.

\bibitem{Antonyan2013}
{S}. {A}ntonyan and {N}. {J}onard{-}P{\'e}rez.
\newblock Affine group acting on hyperspaces of compact convex subsets of
  {$\mathbb{R}^n$}.
\newblock {\em Fund. Math}, 223:99--136, 2013.

\bibitem{AntonyanJonardOrdonez}
{S}. Antonyan, {N}. Jonard-P\'{e}rez, and {S}. Ju\'{a}rez-Ord\'{o}\~{n}ez.
\newblock Hyperspaces of convex bodies of constant width.
\newblock {\em Topology Appl.}, 196(part B):347--361, 2015.

\bibitem{Aubrun2006}
{G}. {A}ubrun and {S}. Szarek.
\newblock Tensor products of convex sets and the volume of separable states on
  n qudits.
\newblock {\em Physical Review A}, 73(2):022109, 2006.

\bibitem{Aubrun2017}
{G}. {A}ubrun and {S}. {S}zarek.
\newblock {\em {A}lice and {B}ob {M}eet {B}anach: {T}he {I}nterface of
  {A}symptotic {G}eometric {A}nalysis and {Q}uantum {I}nformation {T}heory},
  volume 223.
\newblock {A}merican {M}athematical {S}oc., 2017.

\bibitem{Bredon}
{G}. {B}redon.
\newblock {\em {I}ntroduction to compact transformation groups}.
\newblock {A}cademic {P}ress, {N}ew {Y}ork-{L}ondon, 1972.

\bibitem{DeSilva2008}
{V}. De~Silva and {L-H}. Lim.
\newblock Tensor rank and the ill-posedness of the best low-rank approximation
  problem.
\newblock {\em SIAM J. Matrix Anal. and Appl.}, 30(3):1084--1127, 2008.

\bibitem{defantfloret}
{A}. Defant and {K}. Floret.
\newblock {\em Tensor {N}orms and {O}perator {I}deals}, volume 176.
\newblock {N}orth {H}olland {M}athematics {S}tudies, 1st {E}dition edition,
  1992.

\bibitem{Diestelfourie}
{J}. Diestel, {J}. Fourie, and {J}. Swart.
\newblock {\em The metric theory of tensor products}.
\newblock American Mathematical Society, Providence, RI, 2008.
\newblock Grothendieck's r\'{e}sum\'{e} revisited.

\bibitem{maite}
{M}. {F}ern\'andez{-U}nzueta.
\newblock The {S}egre cone of {B}anach spaces and multilinear mappings.
\newblock {\em Linear Multilinear Algebra}, 2018.
\newblock {DOI}: 10.1080/03081087.2018.1509938.

\bibitem{MaiteLuisa2}
{M}. {F}ern{\'a}ndez{-U}nzueta and {L}. {H}igueras{-M}onta{\~n}o.
\newblock {A} {G}eneral {T}heory of {T}ensor {P}roducts of {C}onvex {S}ets in
  {E}uclidean {S}paces.
\newblock {\em arXiv:1906.11377}.
\newblock Submitted.

\bibitem{tensorialbodies}
{M}. {F}ern{\'a}ndez{-U}nzueta and {L}. {H}igueras{-M}onta{\~n}o.
\newblock {C}onvex {B}odies {A}ssociated to {T}ensor {N}orms.
\newblock {\em {J}. {C}onvex {A}nal.}, 26(4).
\newblock Elect. publ. http://www.heldermann.de/JCA/JCA26/JCA264/jca26068.htm
  (to appear in print).

\bibitem{Gluskin}
{E}. Gluskin.
\newblock The diameter of the {M}inkowski compactum is roughly equal to {$n$}.
\newblock {\em Funktsional. Anal. i Prilozhen.}, 15(1):72--73, 1981.

\bibitem{Grothendieck}
{A}. {G}rothendieck.
\newblock {R}\'{e}sum\'{e} de la th\'{e}orie m\'{e}trique des produits
  tensoriels topologiques.
\newblock {\em {B}ol. {S}oc. {M}at. {S}\~{a}o {P}aulo}, 8:1--79, 1953.

\bibitem{HilgertNeeb}
{J.} {H}ilgert and {K.} Neeb.
\newblock {\em Structure and geometry of {L}ie groups}.
\newblock Springer, New York, 2012.

\bibitem{JohnF}
{F}. {J}ohn.
\newblock {E}xtremum problems with inequalities as subsidiary conditions.
\newblock In {\em Studies and {E}ssays {P}resented to {R}. {C}ourant on his
  60th {B}irthday, {J}anuary 8, 1948}, pages 187--204. Interscience Publishers,
  Inc., New York, N. Y., 1948.

\bibitem{kadisonkingrose}
{R}. {K}adison and {J}. {R}ingrose.
\newblock {\em {F}undamentals of the theory of operator algebras}.
\newblock {A}cademic {P}ress, 1983.

\bibitem{Kaplansky1974a}
{I}. {K}aplansky.
\newblock {\em Linear {A}lgebra and {G}eometry: a second course}.
\newblock {D}over {P}ubl., {N}ew {Y}ork, 1974.

\bibitem{KhotNaor}
{S}. Khot and {A}. Naor.
\newblock Grothendieck-type inequalities in combinatorial optimization.
\newblock {\em Comm. Pure Appl. Math.}, 65(7):992--1035, 2012.

\bibitem{Kolda2009}
{T}. Kolda and {B}. Bader.
\newblock Tensor decompositions and applications.
\newblock {\em SIAM {R}ev.}, 51(3):455--500, 2009.

\bibitem{Landsberg2019}
{J}. {L}andsberg.
\newblock {\em Tensors: asymptotic geometry and developments 2016--2018}.
\newblock American Mathematical Society, Providence, RI, 2019.

\bibitem{limcampocualquiera}
{M-H}. {L}im.
\newblock Additive preservers of non-zero decomposable tensors.
\newblock {\em Linear {A}lgebra {A}ppl.}, 428:239--253, 2008.

\bibitem{Mettler}
{T.} Mettler.
\newblock Reduction of {$\beta$}-integrable 2-{S}egre structures.
\newblock {\em Comm. Anal. Geom.}, 21(2):331--353, 2013.

\bibitem{Minkowski1927}
{H}. {M}inkowski.
\newblock {G}eometrie der {Z}ahlen. 1910.
\newblock {\em Teubner, Leipzig}, 1927.

\bibitem{Palais1}
{R}. {P}alais.
\newblock {\em The classification of {$G$}-spaces}.
\newblock Mem. Amer. Math. Soc. No. 36, 1960.

\bibitem{Palais2}
{R}. {P}alais.
\newblock On the existence of slices for actions of non-compact {L}ie groups.
\newblock {\em Ann. of Math. (2)}, 73:295--323, 1961.

\bibitem{pisier}
{G}. {P}isier.
\newblock Grothendieck's theorem, past and present.
\newblock {\em Bull. Amer. Math. Soc. (N.S.)}, 49(2):237--323, 2012.

\bibitem{Ryan2013}
{R}. {R}yan.
\newblock {\em {I}ntroduction to tensor products of {B}anach spaces}.
\newblock {S}pringer monographs in mathematics, 2002.

\bibitem{Schneider1993}
{R}. {S}chneider.
\newblock {\em Convex bodies: the {B}runn-{M}inkowski theory}.
\newblock Cambridge University Press, Cambridge, 1993.

\bibitem{Stroppel}
{M}. Stroppel.
\newblock {\em Locally compact groups}.
\newblock {E}uropean {M}athematical {S}ociety {(EMS)}, {Z}\"{u}rich, 2006.

\bibitem{SzarekgeometryBM}
{S}.~{J}. Szarek.
\newblock On the geometry of the {B}anach-{M}azur compactum.
\newblock In {E.}{E.} {O}dwell and {H}.{P}. {R}osenthal, editors, {\em
  {F}unctional {A}nalysis}, volume 1470. Springer, Berlin, 1991.

\bibitem{Tomczak-Jaegermann1989}
{N}. Tomczak-Jaegermann.
\newblock {\em {B}anach-{M}azur distances and finite-dimensional operator
  ideals}.
\newblock Longman Sc \& Tech, 1989.

\end{thebibliography}

\end{document}